\theoremstyle{plain}
\newtheorem{lem}{Lemma}[section]
\newtheorem{cor}[lem]{Corollary}
\newtheorem{prop}[lem]{Proposition}
\newtheorem{thm}[lem]{Theorem}
\newtheorem*{mthm*}{Main Theorem}
\theoremstyle{definition}
\newtheorem{defn}[lem]{Definition}
\newtheorem{para}[lem]{}
\newtheorem*{convention*}{Convention}
\newcommand{\id}{\operatorname{id}}
\newcommand{\lotimes}{\otimes^{\mathbf{L}}}
\newcommand{\HH}{\operatorname{H}}
\newcommand{\End}{\operatorname{End}}
\newcommand{\Ker}{\operatorname{Ker}}
\newcommand{\xra}{\xrightarrow}
\newcommand{\y}{\mathbf{y}}
\renewcommand{\geq}{\geqslant}
\newcommand{\Ext}[4][R]{\operatorname{Ext}_{#1}^{#2}(#3,#4)}
\newcommand{\Hom}{\operatorname{Hom}}
\def\Ext{\operatorname{Ext}}
\def\D{\operatorname{\mathsf{D}}}
\def\Diff{\mathrm{Diff}}
\def\E{\mathcal{E}}
\def\D{\mathcal{D}}
\def\Der{\mathrm{Der}}
\def\End{\mathrm{End}}
\newcommand{\grHom}{\operatorname{gr-Hom}}
\def\B{{\mathcal B}}
\numberwithin{equation}{lem}
\begin{document}

\bibliographystyle{amsplain}

%\title[On the theory of $j$-operators with applications]{On the theory of $j$-operators with applications}
\title[The theory of $j$-operators with application]{The theory of $j$-operators with application to (weak) liftings of DG modules}

\author{Saeed Nasseh}
\address{Department of Mathematical Sciences\\
Georgia Southern University\\
Statesboro, GA 30460, U.S.A.}
\email{snasseh@georgiasouthern.edu}

\author{Maiko Ono}
\address{Institute for the Advancement of Higher Education, Okayama University of Science, Ridaicho, Kitaku, Okayama 700-0005, Japan}
\email{ono@pub.ous.ac.jp}

\author{Yuji Yoshino}
\address{Graduate School of Natural Science and Technology, Okayama University, Okayama 700-8530, Japan}
\email{yoshino@math.okayama-u.ac.jp}

\thanks{Y. Yoshino was supported by JSPS Kakenhi Grant 19K03448.}

%\dedicatory{}

\keywords{DG algebra, DG module, $j$-operator, lifting, weak lifting.}
\subjclass[2010]{13D07, 16E45.}

\begin{abstract}
A major part of this paper is devoted to an in-depth study of $j$-operators and their properties. This study enables us to obtain several results on liftings and weak liftings of DG modules along simple extensions of DG algebras and unify the proofs of the existing results obtained by the authors on these subjects. Finally, we provide a new characterization of the (weak) lifting property of DG modules along simple extensions of DG algebras.
\end{abstract}

\maketitle

%\tableofcontents

\section{Introduction}\label{sec20200314a}

Throughout the paper, $R$ is a commutative ring.\vspace{5pt}

In 1957, Tate~\cite{Tate} showed that the Poincar\'{e} series of every finitely generated module over a complete intersection is rational (with common denominator), and he provided a lower bound for the Betti numbers of a singular local ring. Tate's results on the Poincar\'{e} series and Betti numbers relied on constructing a DG algebra $B$ by adjoining a variable of degree $i>0$ to a DG algebra $A$ in order to kill a cycle of degree $i-1$ in $A$ and repeating this process until the resulting DG algebra is free of cycles. The DG algebra $B$ described as above is called a \emph{simple extension} of the DG algebra $A$; see~\ref{simple extension} for more details. A key point in {\it op.\ cit.} was using the notion of $j$-operators to investigate the induced homomorphism $\HH(A)\to \HH(B)$ of the homology algebras more closely and prove that it is surjective.

The main purpose of this paper is to give the precise definition of $j$-operators, study their properties, and utilize them to obtain several results on liftings and weak liftings of DG modules along simple extensions of DG algebras. We also unify the proofs of the existing results obtained by the authors in~\cite{nassehyoshino,OY}; in this regard, Sections~\ref{sec20200314d} and~\ref{sec20201110a} of this paper can be considered as an addendum to~\cite{nassehyoshino,OY}. It is worth mentioning that, Section~\ref{sec20201110a} contains a new characterization of the (weak) lifting property of DG modules along such DG algebra extensions; see Theorem~\ref{naive thm one variable}.

%Lifting and weak lifting properties have been extensively studied for finitely generated modules and complexes along certain ring extensions by Auslander, Ding, Solberg, and Yoshino~\cite{auslander:lawlom, yoshino}, and for DG modules along certain DG algebra homomorphisms by Nasseh, Ono, Sather-Wagstaff, and Yoshino~\cite{nasseh:lql, nassehyoshino,OY}. These works were followed by important applications in homological commutative algebra. For instance, see~\cite{auslander:lawlom} where (1) a conjecture of Auslander and Reiten~\cite{auslander:gvnc}  over complete intersection rings was proved; (2) liftable and weak liftable maximal Cohen-Macaulay modules were characterized in terms of their minimal Cohen-Macaulay approximations; and (3) it was shown that weak lifting property of a module corresponds to that the module admits infinitesimal deformation. See also~\cite{nasseh:lrfsdc} where a conjecture of Vasconcelos~\cite{vasconcelos:dtmc} on semidualizing modules was settled in its full generality using a lifting result for DG modules from~\cite{nasseh:lql}.

%\color{blue} In our next paper~\cite{NOY}, we work \color{black} to prove the (weak) lifting property of DG modules along DG algebra extensions that are obtained by adjoining \emph{finitely %many} variables of positive degrees to kill finitely many cycles.

\section{Preliminaries and conventions}\label{sec20200314b}

This section contains the notation along with the basic facts that are used in the subsequent sections. We assume that the reader is familiar with the category of DG modules over DG algebras. References on this subject include~\cite{avramov:ifr,
avramov:dgha, felix:rht, GL}.
%For clarity we include a
%few definitions and fix our notation in this section.

\begin{para}\label{para20200329a}
Throughout the paper,  $A$ is a \emph{strongly commutative differential graded $R$-algebra} (DG $R$-algebra, for short), that is, $A  = \bigoplus  _{n \geq 0} A _n$ is a non-negatively graded $R$-algebra equipped with  a differential map $d^A \colon A \to A(-1)$ (which is a graded $R$-linear map with  $(d^A)^{2} = 0$) such that  $ab = (-1)^{|a| |b|}ba$  for all $a, b \in A$  and  $a^2 =0$  if  $|a|$ is odd, and that $d^A$ satisfies the \emph{Leibniz rule}: for all $a,b\in A$ we have $d^A(ab)=d^A(a)b+(-1)^{|a|}ad^A(b)$. Here, $A(-1)$ denotes the $-1$st suspension of $A$ and $|a|$ denotes the degree of the homogeneous element $a\in A$.
\end{para}

\begin{para}\label{simple extension}
In this paper, let  $B = A \langle X \mid dX = t \rangle$  be a DG $R$-algebra with the differential $d:=d^B$ that is a simple extension of the DG $R$-algebra $A$ obtained by adjunction of a variable  $X$ with $|X| > 0$ such that $dX = t \in A$  is a cycle. We will simply denote $B$ by $A\langle X \rangle$.
Note that  $|X| = |t|+1$ and we have the following cases.
\begin{enumerate}[\rm(a)]
\item
If $|X|$ is odd, then  $X^{2} = 0$ and  $B = A \oplus XA$. Moreover, for all $a + Xb\in B$
$$
d(a + Xb)=d^A(a) + tb - X d^A(b).
$$

\item
If $|X|$ is even, then  $B$ is a free algebra with divided power,
that is,  $B = A \oplus XA \oplus X^{(2)}A \oplus X^{(3)} A \oplus \cdots $
with the differential structure $dX^{(m)}=X^{(m-1)}t$  and with the algebra structure $X^{(m)}X^{(\ell)} =\binom{m+\ell}{m} X^{(m+\ell)}$, considering the conventions $X^{(0)}=1$  and $X^{(1)}=X$. Note that the action of $d$ on a general element of $B$ is given by the formula
\begin{equation}\label{eq20200315v}
d\left(\sum_{i=0}^nX^{(i)}a_i\right)=\sum_{i=0}^{n-1}X^{(i)}\left(d^A(a_i)+ta_{i+1}\right)+X^{(n)}d^A(a_n).
\end{equation}
\end{enumerate}
\end{para}

\begin{para}\label{para20200329b}
Throughout the paper, $N$ is a graded free  right  $B$-module, that is,
$N = \bigoplus _{\lambda \in \Lambda}  e _{\lambda} B$,  where  $\Lambda$ is an index set and we have
$e _{\lambda} B \cong B (-|e_{\lambda}|)$  as graded $B$-modules for each $\lambda \in \Lambda$.
Note that since  $B$ is graded commutative, $N$ is also a graded left $B$-module with the left $B$-action
$bx = (-1)^{|x||b|} xb$  for  $b\in B$ and $x \in N$.

The \emph{graded endomorphism ring} of $N$, denoted $\E$, is
$$
\E := \End ^*_B(N) = \bigoplus _{n \in \mathbb{Z}} \grHom_B(N, N(n)).
$$
Note that an $R$-homomorphism  $f\colon N \to N(n)$  is $B$-linear if  $f(xb)= f(x)b$ for $x \in N$ and $b \in B$.
Note also that $\E$ is not only an associative graded ring but also a Lie subalgebra of $\End ^* _R (N)$  by defining the bracket product as
$$
[f, g] = f\circ g - (-1)^{|f||g|}g\circ f.
$$
From now on, for simplicity, we may denote the composition notation $f\circ g$ by $fg$. We will specify the composition notation if there is a fear of confusion.

For homogeneous elements $f,g,h\in \E$, the \emph{Jacobi identity} is
$$
[[f,g],h]=[f,[g,h]]-(-1)^{|f||g|}[g,[f,h]].
$$
\end{para}

\begin{para}\label{para20200314b}
The left multiplication map by an element  $b \in B$, denoted $\ell _b$, is an element of  $\E$; in fact, $\ell_b(x)=bx$ for all $x\in N$. In other words, we have a map $B \longrightarrow \E$ that is given by $b \mapsto \ell _b$.
By identifying  $b$ with  $\ell _b$ under this map, we regard  $B$  as a  subring of  $\E$. Note that for all $a,b\in B$ we have $[\ell_a,\ell_b]=0$.

If $f\colon N \to N(n)$ is an additive map, then for all $b\in B$ and $x\in N$ we have
$f \ell_b (x) = f((-1)^{|b||x|}xb) = (-1)^{|b||x|}f(xb)$ and
$\ell_b f (x) = (-1)^{(|f|+|x|)|b|}f(x)b$.
Hence, we have
\begin{equation}\label{eq20200314a}
[f, \ell_b](x) = (-1)^{|x||b|}(f(xb)-f(x)b).
\end{equation}
This implies that
$[f, \ell_b] =0$ if and only if $f$ is $B$-linear (that is, $f\in \E$).
\end{para}

\begin{para}\label{para20200314c}
Let $\Diff _B (N)$  be the set of all $R$-linear maps $\partial\colon N \to N (-1)$ that satisfy the \emph{Leibniz rule}, that is, $\partial(xb) = \partial(x) b + (-1)^{|x|} x db$ for $x \in N$ and $b\in B$.

By~\eqref{eq20200314a}, for an $R$-linear map $\partial \colon N \to N (-1)$ and for all $b\in B$ and $x\in N$ we have
$[\partial , \ell_b](x) = (-1)^{|b||x|} (\partial (xb) - \partial (x)b)$. We also have $\ell _{db} (x) = (-1) ^{(|b|-1)|x|}x db$.
Thus, $\partial$ belongs to  $\Diff_B(N)$  if and only if  $[\partial, \ell_b] =\ell _{db}$  for all $b \in B$.

We call an element of $\Diff_B(N)$ a \emph{differential on $N$}.
If  $\partial \in \Diff _B(N)$  satisfies  $\partial ^2= 0$,  then  $(N,  \partial)$ defines a right DG $B$-module structure.

Note that if $0$ is in $\Diff_B(N)$, then $\ell_{db}=0$ for all $b \in B$ and since  $N$  is a free $B$-module, it means that $d=0$. Therefore, if $d \not= 0$ then  $0$ is not in $\Diff_B(N)$. Using~\ref{para20200314b}, by the same argument we have that if $d\not=0$, then $\E \cap \Diff _B(N) = \emptyset$.
\end{para}

\begin{para}\label{para20200315g}
For a given basis  $\mathcal{B} = \{ e_{\lambda} \}$ of $N$ as a free $B$-module, we define the \emph{free differential}  $\partial ^{\mathcal{B}} \in \Diff _B(N)$  by  $\partial ^{\mathcal{B}}  (e_{\lambda}) = 0$ for all $\lambda$.
Recall that $N\cong \bigoplus_{\lambda}B(-|e_{\lambda}|)$ and note that  $\partial  ^{\mathcal{B}} (\sum _{\lambda} e_{\lambda} b_{\lambda} ) = \sum _{\lambda} (-1)^{|e_{\lambda}|}e_{\lambda} db_{\lambda}$. In particular, $(N, \partial ^{\mathcal{B}})$ is a free DG $B$-module.

Note also that  if  $\partial, \partial ' \in \Diff _B(N)$  then  $\partial - \partial ' \in \E$.
Therefore,  every differential on $N$ can be written as  $\partial ^{\mathcal{B}}+ g$  for some $g \in \E$.
\end{para}

In the following we always assume that  $d \not=0$; see~\ref{para20200314c}.

\begin{para}\label{Jacobi}
$\E \oplus \Diff _B(N)$ is a Lie subalgebra  of  $\End _R^* (N)$.
In fact, the following holds:
$$[\E \oplus \Diff _B(N), \E \oplus \Diff _B(N)] \subseteq \E.$$
To see this, let  $f, f' \in \E$ and $\partial, \partial ' \in \Diff _B (N)$.
The fact that $[f, f'] \in \E$  is clear. For all $b\in B$, note that $[f, \ell_b] = 0$. Hence, from Jacobi identity we have $[[f, \partial], \ell_b]  = [f,[\partial,  \ell_b]]=[f,\ell_{db}] = 0$.
Therefore, $[f, \partial] \in \E$.
Similarly,  $[[\partial, \partial'], \ell_b] = [\partial, \ell_{db}] +[\partial ', \ell_{db}] = \ell_{d^2b} + \ell_{d^2b}=0$, and thus, $[\partial, \partial '] \in \E$.
\end{para}

\begin{para}\label{para20200315a}
An $R$-linear map $\varphi\colon N \to N$ is called an \emph{at most first-order derivation on $N$} if $[ \varphi , \ell_b] \in \E$ for all $b \in B$, equivalently, $[[\varphi, \ell_b], \ell_{b'}]=0$ for all $b, b' \in B$.
All elements in $\E \cup \Diff _B(N)$ are at most first-order derivations.
\end{para}

\begin{para}\label{def D} Let $\D$ be the set of all $R$-linear maps $\varphi\colon N \to N$ for which there is $g_{\varphi} \in \E$ (depending only on $\varphi$) such that $[\varphi, \ell_b]= g_{\varphi}\ell_{db}$, for all $b \in B$.

Since $g_{\varphi}$ is $B$-linear, it follows from~\ref{para20200314b} and~\ref{para20200315a} that each element $\varphi\in \D$ is an at most first-order derivation. Also  $\D$ is an abelian group, i.e., if $\varphi, \psi \in \D$, then $\varphi  \pm \psi \in \D$.

Note that $\E \cup \Diff_B(N)  \subseteq \D$.
In fact, if $\varphi \in \E$ (resp. $\varphi \in \Diff_B(N)$)  then $g _{\varphi}=0$ (resp. $g _{\varphi}= 1$, the identity map).
Hence, $\E \oplus \Diff _B(N) \subseteq \D$.

Note also that  $\D$  is a graded abelian group;  $\varphi \in \D$ is homogeneous of degree $|\varphi |$  if and only if   $\varphi$ sends $N _i$  into  $N _{i+|\varphi |}$ for all $i \in \mathbb{Z}$.
We have $| \varphi | = |g _{\varphi}| -1$ if  $g_{\varphi} \not=0$.
\end{para}

\begin{para}\label{formula1}
Let  $f \in \E$, $\partial  \in \Diff _B(N)$, and $b \in B$. Then
$[f  \partial, \ell_b] = f \ell _{db}$ and $[\partial f, \ell_b]= (-1)^{|f|} f \ell _{db}$.
To see the first equality, note that by~\ref{para20200314b} and~\ref{para20200314c} we have
\begin{eqnarray*}
[f  \partial, \ell_b] &=& f \partial \ell _b - (-1)^{(|f|-1)|b|} \ell_b f \partial  \\
&=&f \partial \ell _b - (-1)^{|b|} f \ell_b \partial + (-1)^{|b|} f \ell_b \partial  - (-1)^{(|f|-1)|b|} \ell_b f \partial \\
&=& f [\partial , \ell_b] + (-1)^{|b|}[f, \ell_b] \partial\\
& =& f \ell _{db}.
\end{eqnarray*}
Similarly, for the second equality we have
\begin{eqnarray*}
[\partial f , \ell_b] &=& \partial f \ell _b - (-1)^{(|f|-1)|b|} \ell_b \partial  f \\
&=& \partial f \ell _b - (-1)^{|b||f|}  \partial \ell_b f + (-1)^{|b||f|}  \partial  \ell_b f - (-1)^{(|f|-1)|b|} \ell_b  \partial f \\
&=& \partial [f , \ell_b] + (-1)^{|b||f|}[\partial , \ell_b] f\\
&  =&  (-1)^{|b||f|} \ell _{db} f \\
& =& (-1)^{|b||f|+|db||f|} f \ell _{db}\\
& =& (-1)^{|f|} f \ell _{db}.
\end{eqnarray*}
Therefore, if  $f \in \E$ and $\partial  \in \Diff_B(N)$, then  the composition maps $f \partial$  and  $\partial f$ from $N$ to itself belong to $\D$.
In other words, $\E \circ \Diff _B(N) + \Diff _B(N) \circ \E \subseteq \D$.
\end{para}

%We record the following result for later use.

\begin{para}\label{prop20200315a}
Let $\varphi \in \D$, and let  $\partial \in \Diff _B(N)$ be an arbitrary differential.
There is  $g=g_{\varphi} \in \E$ such that  $[\varphi , \ell_b] = g \ell_{db}$ for all $b \in B$.
On the other hand, \ref{formula1} shows that $[g \partial, \ell_b]  = g \ell _{db}$.
Hence, $[\varphi - g \partial , \ell_b] =0$ for all $b \in B$.
Thus,  $\varphi - g \partial \in \E$.
Similarly, $\varphi -(-1)^{|g|} \partial g \in \E$.
Therefore, every element of $\D$ is of the form
$f + g \partial$ and  $f' + \partial g'$  for some $f, f', g, g' \in \E$.
In particular, we have the equalities $$\D = \E + \E \circ \partial  =  \partial \circ \E +\E$$ and hence, $\D$  is a two-sided  $\E$-module generated by $\{1, \partial\}$.
We also have $$\D = \E + \E \circ \Diff _B(N) =  \Diff _B(N) \circ \E +\E.$$
\end{para}

\begin{para}\label{yuji20201115a}
Note that for every $\partial \in \Diff _B(N)$ we have $\partial ^2 = \partial \circ \partial \in \E$. This follows from~\ref{para20200314b}, \ref{para20200314c}, and the fact that for all $b\in B$ we have
$$[\partial ^2, \ell _b] = \partial [\partial, \ell _b] + (-1)^{|b|}[\partial, \ell _b] \partial = \partial \ell_{db} + (-1)^{|b|}\ell_{db} \partial = [\partial, \ell_{db}] = \ell _{d^2b} =0.$$
\end{para}

\begin{para}\label{para20201117a}
Note that $\D\subset \End^*_R(N)$ is an $R$-subalgebra. To see this, since $\D \subset \End^*_R(N)$ is an $R$-submodule, it is enough to show that $\D \circ \D \subseteq \D$.
By \ref{prop20200315a} we know that $\D \circ \E \subseteq \D$. Hence, it suffices to show that $\D \circ \partial \subseteq \D$ for some $\partial \in \Diff _B(N)$.
This follows from \ref{yuji20201115a},  since
$\D \circ \partial = (\E + \E \circ \partial)\circ \partial = \E \circ \partial + \E \circ \partial ^2 \subseteq \E \circ \partial + \E = \D$.
\end{para}

\begin{para}\label{para20200320d}
For $f \in \E$ and $g\in \D$ we define $ad(f)(g)=[f, g]$. It follows from~\ref{prop20200315a} that $ad (f)$ is a map from $\D$ to itself.
\end{para}

\section{$j$-operators}\label{sec20200314c}

As we mentioned in the introduction, the notion of $j$-operators was introduced by Tate~\cite{Tate}. The purpose of this section is to give the precise definition of $j$-operators and list some of their properties for later use in the subsequent sections.

In this section we use the notation from Section~\ref{sec20200314b}. To distinguish between the differential of $B$ and the derivative operator (see~\ref{para20200315d} below), we specifically denote the differential of $B$ by $d^B$.

\begin{para}\label{para20200315d}
The \emph{derivative operator}, denoted $\frac{d}{dX}\colon B \to B$,  is defined as follows:

If $|X|$ is odd and  $b = a_0 + Xa_1\in B$ for  $a_0, a_1 \in A$, then  $\frac{db}{dX} = a_1$.

If  $|X|$ is even and $b = \sum_{i=0}^n X^{(i)} a_i\in B$ for $a_i\in A$, then
$\frac{db}{dX} = \sum_{i=1}^n X^{(i-1)} a_i$.
\end{para}

\begin{lem}\label{d/dX}
The following equalities hold for  $b, b' \in B$:
\begin{gather}
 \label{20200315x}
 \frac{d}{dX} (b b') = \frac{db}{dX} \ b' + (-1) ^{|b||X|} b \  \frac{db'}{dX}\\
 \label{20200315y}
\frac{d}{dX} (d^B b) = (-1)^{|X|} d^B (\frac{db}{dX}).
\end{gather}
Also, $\frac{db}{dX}=0$  if and only if  $b \in A$.
\end{lem}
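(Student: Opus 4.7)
The plan is to verify all three assertions by a direct case analysis on the parity of $|X|$, using the explicit descriptions of $B$ from~\ref{simple extension} and of $\frac{d}{dX}$ from~\ref{para20200315d}. Since $\frac{d}{dX}$ is $R$-linear, it suffices to check (i) and (ii) on homogeneous $b,b'$.

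For (i) in the odd case, write $b=a_0+Xa_1$, $b'=a'_0+Xa'_1$ with $a_i,a'_j\in A$. Using $X^2=0$ and $a_iX=(-1)^{|a_i|}Xa_i$ (from $|X|$ odd), I would expand $bb'$ and read off the coefficient of $X$, obtaining $a_1a'_0+(-1)^{|a_0|}a_0a'_1$. Expanding the right-hand side and using $|a_1|=|b|-|X|=|b|-1$ to convert signs, the two terms involving $Xa_1a'_1$ cancel (since they differ by a sign), leaving exactly $a_1a'_0+(-1)^{|b|}a_0a'_1$. In the even case, write $b=\sum_iX^{(i)}a_i$, $b'=\sum_jX^{(j)}a'_j$; since $|X^{(j)}|=j|X|$ is even, no signs arise from moving $a_i$ past $X^{(j)}$, and $X^{(i)}X^{(j)}=\binom{i+j}{i}X^{(i+j)}$ gives
\[
\frac{d(bb')}{dX}=\sum_{k\geq 0}X^{(k)}\sum_{i+j=k+1}\binom{k+1}{i}a_ia'_j.
\]
Computing $\frac{db}{dX}\,b'+b\frac{db'}{dX}$ the same way (with $(-1)^{|b||X|}=1$), the coefficient of $X^{(k)}$ becomes $\sum_{i\geq 1,\,i+j=k+1}\binom{k}{i-1}a_ia'_j+\sum_{j\geq 1,\,i+j=k+1}\binom{k}{i}a_ia'_j$; Pascal's identity $\binom{k+1}{i}=\binom{k}{i-1}+\binom{k}{i}$ together with $\binom{k}{-1}=\binom{k}{k+1}=0$ matches this to the left-hand side. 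This is the main technical point of the proof.

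For (ii), in the odd case one computes directly from $d^B(a_0+Xa_1)=d^Aa_0+ta_1-Xd^Aa_1$ that $\frac{d(d^Bb)}{dX}=-d^Aa_1=(-1)^{|X|}d^B(\frac{db}{dX})$. In the even case I apply~\eqref{eq20200315v} to $d^B b$, then differentiate term by term using $\frac{d}{dX}X^{(i)}=X^{(i-1)}$, and separately apply~\eqref{eq20200315v} to $\frac{db}{dX}=\sum_{j\geq 0}X^{(j)}a_{j+1}$; a reindexing shows that both expressions equal $\sum_{i=0}^{n-2}X^{(i)}(d^Aa_{i+1}+ta_{i+2})+X^{(n-1)}d^Aa_n$, and $(-1)^{|X|}=1$ gives the claim.

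Finally, (iii) is immediate from the definition: $\frac{db}{dX}=0$ forces $a_1=0$ in the odd case and $a_1=\cdots=a_n=0$ in the even case, so that $b=a_0\in A$, and the converse is trivial. The only real obstacle is the bookkeeping in the even case of (i), which is resolved cleanly by Pascal's identity.
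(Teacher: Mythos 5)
Your proof is correct and follows essentially the same route as the paper's: the same odd-case expansion with cancellation of the two $Xa_1a'_1$ terms, and the same reliance on Pascal's identity in the even case (the paper organizes that computation as an induction on the number of terms of $b$ with a one-term base case, whereas you do the direct double sum over $i+j=k+1$, but the content is identical), with (ii) and the final assertion handled exactly as in the paper. One cosmetic slip: $|a_1|=|b|-|X|$, which need not equal $|b|-1$ since $|X|$ is only assumed odd, though only the parity is used so your sign conclusion stands.
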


\begin{proof}
Assume first that $|X|$ is odd. Let $b=a_0+Xa_1$ and $b'=a'_0+Xa'_1$ be elements in $B$ with $a_0, a_1, a'_0, a'_1\in A$ . Then
\begin{eqnarray*}
\frac{db}{dX} \ b' + (-1) ^{|b||X|} b \  \frac{db'}{dX}
&=&a_1(a_0'+Xa_1')+ (-1) ^{|a_0|}(a_0+Xa_1)a_1' \\
&=&a_1a'_0 + (-1)^{|a_0|} a_0 a_1' + (-1)^{|a_1|} Xa_1a_1' +   (-1) ^{|a_0|}Xa_1a_1' \\
&=&a_1a'_0 + (-1)^{|a_0|} a_0 a_1'\\
&=&\frac{d}{dX} \left(bb'\right)
\end{eqnarray*}
which is~\eqref{20200315x}. Also, for~\eqref{20200315y} we have
$$
\frac{d}{dX} \left( d^B b \right) = -d^Aa_1
 = (-1)^{|X|} d^B \left(\frac{d}{dX}(a_0+ Xa_1)\right)=(-1)^{|X|} d^B (\frac{db}{dX}).
$$

For the rest of the proof assume that $|X|$ is even. Let $b=\sum_{i=0}^nX^{(i)}a_i$ and $b'=\sum_{i=0}^mX^{(i)}a'_i$ with $a_i, a'_i\in A$ be elements in $B$. To prove~\eqref{20200315x}, we proceed by induction on $n$. The base case is obtained from the following (with $n=0$).

If $b=X^{(n)}a$ with $a\in A$ consists of only one term, then
\begin{eqnarray*}
\frac{db}{dX} \ b' + b \  \frac{db'}{dX}&=& \left(X^{(n-1)}a\right)\left(\sum_{i=0}^mX^{(i)}a'_i\right)+\left(X^{(n)}a\right)\left(\sum_{i=1}^mX^{(i-1)}a'_i\right)\\
&=&\sum_{i=0}^m \binom{n+i-1}{n-1}X^{(n+i-1)}aa'_i+\sum_{i=1}^m \binom{n+i-1}{n}X^{(n+i-1)}aa'_i\\
&=& \sum_{i=0}^m \binom{n+i}{n}X^{(n+i-1)}aa'_i\\
&=&\frac{d}{dX}(bb').
\end{eqnarray*}
For the general case, note that $b=b_1+X^{(n)}a_n$, where $b_1=\sum_{i=0}^{n-1}X^{(i)}a_i$. We then have $bb'=b_1b'+X^{(n)}a_nb'$. The second equality in the next display follows from the previous case and inductive step:
\begin{eqnarray*}
\frac{d}{dX} (bb')&=& \frac{d}{dX} (b_1b')+\frac{d}{dX} \left(X^{(n)}a_nb'\right)\\
&=&\frac{db_1}{dX} \ b' + b_1 \  \frac{db'}{dX}+\frac{d\left(X^{(n)}a_n\right)}{dX} \ b' + X^{(n)}a_n \  \frac{db'}{dX}\\
&=& \frac{db_1}{dX} \ b' + b_1 \  \frac{db'}{dX}+X^{(n-1)}a_n \ b' + X^{(n)}a_n \  \frac{db'}{dX}\\
&=& \left(\frac{db_1}{dX}+X^{(n-1)}a_n\right)b'+\left(b_1+X^{(n)}a_n\right)\ \frac{db'}{dX}\\
&=&\frac{db}{dX} \ b' + b \  \frac{db'}{dX}.
\end{eqnarray*}
Equality~\eqref{20200315y} follows directly from~\eqref{eq20200315v}.
\end{proof}

Here is the definition of \emph{restricted $j$-operators} defined on $\E \cup \Diff _B(N)$. The general definition of $j$-operators defined on $\D$ will be given in Definition~\ref{def j-op}.

\begin{para}\label{para20201110a}
Let  $\mathcal{B} = \{ e_{\lambda}\}$ be a basis of $N$ as a free $B$-module.
We define $$j _X^{\mathcal{B}}\colon \E \cup \Diff _B(N) \longrightarrow \E$$ as follows:
for any $\alpha \in \E \cup \Diff _B(N)$  write  $\alpha ( e_{\lambda}) = \sum _{\mu} e_{\mu} b _{\mu \lambda}$ where  $b _{\mu \lambda} \in B$.
Then $j_X^{\mathcal{B}}(\alpha)$  is defined to be a $B$-linear map that acts on a basis element  $e_{\lambda}$ by
\begin{equation}\label{def j-op}
j_X^{\mathcal{B}}(\alpha) ( e_{\lambda}) = \sum _{\mu} (-1) ^{|e_{\mu}||X|} e_{\mu} \frac{d b _{\mu \lambda}}{dX}.
\end{equation}
\end{para}

Note that $j_X^{\mathcal{B}} ( \partial ^{\mathcal{B}}) = 0$,  where  $\partial ^{\mathcal{B}}$ is the free differential associated to the basis $\mathcal{B}$ defined in~\ref{para20200315g}.
Note also that  $|j_X^{\mathcal{B}}| = -|X|$.

\begin{lem}\label{LR for E}
Let  $f, g \in \E$.
Then, for $j =   j _X^{\mathcal{B}}$ we have
$$
j(fg) =  j(f) g + (-1)^{|X||f|} f j(g).
$$
\end{lem}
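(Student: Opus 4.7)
The plan is to verify the identity by evaluating both sides on an arbitrary basis element $e_\lambda$ and comparing coefficients. Fix the basis $\mathcal{B} = \{e_\lambda\}$ and write $g(e_\lambda) = \sum_\mu e_\mu c_{\mu\lambda}$ and $f(e_\mu) = \sum_\nu e_\nu b_{\nu\mu}$ with $b_{\nu\mu}, c_{\mu\lambda} \in B$. Since $f$ is right $B$-linear (so $f(xb)=f(x)b$), we obtain $(fg)(e_\lambda) = \sum_{\mu,\nu} e_\nu\, b_{\nu\mu} c_{\mu\lambda}$. Applying the defining formula \eqref{def j-op} followed by the Leibniz rule \eqref{20200315x} from Lemma \ref{d/dX} yields
$$
j(fg)(e_\lambda) = \sum_{\mu,\nu} (-1)^{|e_\nu||X|} e_\nu \Bigl( \tfrac{db_{\nu\mu}}{dX}\, c_{\mu\lambda} + (-1)^{|b_{\nu\mu}||X|} b_{\nu\mu}\, \tfrac{dc_{\mu\lambda}}{dX}\Bigr).
$$

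Next, I would compute the two summands on the right-hand side separately. Because $j(f) \in \E$ is $B$-linear, $(j(f)g)(e_\lambda) = \sum_\mu j(f)(e_\mu)\, c_{\mu\lambda}$, and this expands via \eqref{def j-op} to give precisely the first summand above. For the other piece, $B$-linearity of $f$ gives
$$
(fj(g))(e_\lambda) = \sum_\mu (-1)^{|e_\mu||X|} f(e_\mu)\, \tfrac{dc_{\mu\lambda}}{dX} = \sum_{\mu,\nu} (-1)^{|e_\mu||X|} e_\nu\, b_{\nu\mu}\, \tfrac{dc_{\mu\lambda}}{dX}.
$$

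The sole nontrivial point is reconciling the two sign conventions on the second summand: in the expansion of $j(fg)(e_\lambda)$ the coefficient is $(-1)^{|X|(|e_\nu|+|b_{\nu\mu}|)}$, while in $(-1)^{|X||f|} f j(g)(e_\lambda)$ it is $(-1)^{|X|(|f|+|e_\mu|)}$. These agree because the equation $f(e_\mu)=\sum_\nu e_\nu b_{\nu\mu}$, together with homogeneity of $f$ of degree $|f|$, forces $|f| + |e_\mu| = |e_\nu| + |b_{\nu\mu}|$ whenever $b_{\nu\mu}\neq 0$. Once this degree bookkeeping is confirmed, the two displayed expressions agree term-by-term on each $e_\lambda$, and since both sides of the claimed identity are $B$-linear maps, equality on basis elements suffices. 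I do not expect a substantive obstacle beyond the sign tracking.
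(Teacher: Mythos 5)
Your proposal is correct and is essentially the paper's own proof: both evaluate on a basis element, expand $(fg)(e_\lambda)$ using $B$-linearity, apply the Leibniz rule for $d/dX$ from Lemma~\ref{d/dX}, and reconcile the signs via the degree relation $|f|+|e_\mu| = |e_\nu|+|b_{\nu\mu}|$. No gaps.
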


\begin{proof}
Note that both sides are $B$-linear transformations of $N$, so to prove the equality it suffices to show that these maps have identical images for a basis element $e_{\lambda}\in \mathcal B$. To see this, let
$f (e_{\lambda}) = \sum _{\mu} e_{\mu} F _{\mu \lambda}$ and $g (e_{\lambda}) = \sum _{\mu} e_{\mu} G_{\mu \lambda}$ with $F_{\mu \lambda},  G_{\mu \lambda}\in B$.
Then
$f g (e_{\lambda}) = \sum _{\nu} e_{\nu} \sum _{\mu} F _{\nu \mu} G_{\mu \lambda}$ and it follows from Lemma \ref{d/dX} that
\begin{eqnarray*}
j(fg) (e_{\lambda} ) &=& \sum _{\nu} (-1)^{|e_{\nu}||X|} e_{\nu}  \frac{d}{dX} \left( \sum _{\mu} F _{\nu \mu} G_{\mu \lambda} \right) \\
&=&  \sum _{\nu} (-1)^{|e_{\nu}||X|} e_{\nu}  \left( \sum _{\mu} \frac{dF _{\nu \mu}}{dX}  G_{\mu \lambda} + \sum _{\mu} (-1) ^{|F _{\nu \mu}||X|} F _{\nu \mu} \frac{dG _{\nu \mu}}{dX}  \right)  \\
&=&  \sum _{\nu, \mu} (-1)^{|e_{\nu}||X|} e_{\nu}   \frac{dF _{\nu \mu}}{dX}  G_{\mu \lambda} + \sum _{\nu, \mu} (-1) ^{(|f| + |e_{\mu}|) |X|} e_{\nu}  F _{\nu \mu} \frac{dG _{\nu \mu}}{dX}  \\
&=&  j(f)g(e_{\lambda})
+ (-1) ^{|X||f|} f j(g) (e_{\lambda}).
\end{eqnarray*}
The third equality uses the fact that $|F_{\nu \mu}| + |e_{\nu}| = |f| + |e_{\mu}|$.
\end{proof}

\begin{prop}\label{LR for general}
Let  $f, g, h \in \E$ and  $\partial \in \Diff_B(N)$.
Assume that  $f + g \partial + \partial h =0$  as an element of  $\D$.
Then, for $j =   j _X^{\mathcal{B}}$ we have
\begin{equation}\label{eq20200320a}
j(f) + j(g) \partial +(-1)^{|X||g|}g j(\partial )+ j(\partial ) h + (-1)^{|X|} \partial j(h) =0.
\end{equation}
\end{prop}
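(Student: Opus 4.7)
The plan is to evaluate both sides of~\eqref{eq20200320a} on an arbitrary basis element $e_\lambda\in\mathcal{B}$ and reduce the identity to an application of the Leibniz formula~\eqref{20200315x} for the derivative operator. Since each summand on the left of~\eqref{eq20200320a} lies in $\D$ by~\ref{formula1}, checking on basis elements suffices. I would introduce the matrix entries
$$f(e_\lambda)=\sum_\mu e_\mu F_{\mu\lambda},\ \ g(e_\lambda)=\sum_\mu e_\mu G_{\mu\lambda},\ \ h(e_\lambda)=\sum_\mu e_\mu H_{\mu\lambda},\ \ \partial(e_\lambda)=\sum_\mu e_\mu P_{\mu\lambda},$$
with entries in $B$.

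First, using the $B$-linearity of $g$ and the Leibniz rule $\partial(xb)=\partial(x)b+(-1)^{|x|}x\,d^B b$ from~\ref{para20200314c}, the hypothesis $(f+g\partial+\partial h)(e_\lambda)=0$ translates, after comparing coefficients of $e_\nu$, into the identity
\begin{equation*}
(\star)\qquad F_{\nu\lambda}+\sum_\mu G_{\nu\mu}P_{\mu\lambda}+\sum_\mu P_{\nu\mu}H_{\mu\lambda}+(-1)^{|e_\nu|}d^B H_{\nu\lambda}=0,\qquad\text{for all }\nu,\lambda.
\end{equation*}

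Next, I would expand each of the five summands on the left of~\eqref{eq20200320a} at $e_\lambda$ using the definition of $j=j_X^{\mathcal{B}}$ from~\eqref{def j-op}, together with the $B$-linearity of $j(g),j(\partial),j(h),g$ and the Leibniz rule for $\partial$. After collecting the coefficient of $e_\nu$ and factoring out $(-1)^{|e_\nu||X|}$, the prefactor $(-1)^{|X||g|+|e_\mu||X|-|e_\nu||X|}$ arising in the third summand becomes $(-1)^{|X||G_{\nu\mu}|}$ via the degree identity $|G_{\nu\mu}|=|g|+|e_\mu|-|e_\nu|$; analogously, the prefactor in the fifth summand becomes $(-1)^{|X||P_{\nu\mu}|}$ using $|P_{\nu\mu}|=|e_\mu|-|e_\nu|-1$. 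The Leibniz-remainder piece from $(-1)^{|X|}\partial j(h)$ then produces exactly $(-1)^{|e_\nu|}\frac{d}{dX}(d^B H_{\nu\lambda})$ after an application of~\eqref{20200315y}.

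Collapsing the paired sums $\frac{dG_{\nu\mu}}{dX}P_{\mu\lambda}+(-1)^{|X||G_{\nu\mu}|}G_{\nu\mu}\frac{dP_{\mu\lambda}}{dX}$ and $\frac{dP_{\nu\mu}}{dX}H_{\mu\lambda}+(-1)^{|X||P_{\nu\mu}|}P_{\nu\mu}\frac{dH_{\mu\lambda}}{dX}$ via~\eqref{20200315x}, the coefficient of $e_\nu$ becomes
$$\frac{d}{dX}\!\left[F_{\nu\lambda}+\sum_\mu G_{\nu\mu}P_{\mu\lambda}+\sum_\mu P_{\nu\mu}H_{\mu\lambda}+(-1)^{|e_\nu|}d^B H_{\nu\lambda}\right]=\frac{d}{dX}(0)=0$$
by $(\star)$. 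The main obstacle in this proof is precisely the sign bookkeeping: the prefactors $(-1)^{|X||g|}$ and $(-1)^{|X|}$ recorded in the statement are engineered so that, after the two degree-identity simplifications above and the $d^B/\frac{d}{dX}$ commutation~\eqref{20200315y}, all signs assemble into a single $\frac{d}{dX}$ applied to $(\star)$, with the $d^B H_{\nu\lambda}$ term carrying exactly the sign $(-1)^{|e_\nu|}$ required there.
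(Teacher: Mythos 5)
Your basis-element computation is itself correct --- the signs really do assemble into $\tfrac{d}{dX}$ applied to your identity $(\star)$ --- but the reduction you use to set it up is where the argument breaks. You justify ``checking on basis elements suffices'' by observing that each summand of \eqref{eq20200320a} lies in $\D$. That is the wrong class: a map is determined by its values on a $B$-basis only if it is $B$-linear, i.e.\ lies in $\E$. Elements of $\D$ are not $B$-linear in general: by \ref{prop20200315a}, an element $\varphi\in\D$ with $[\varphi,\ell_b]=g_\varphi\ell_{d^Bb}$ decomposes as $\varphi=(\varphi-g_\varphi\partial^{\mathcal B})+g_\varphi\partial^{\mathcal B}$, and vanishing on the basis only forces the $\E$-component $\varphi-g_\varphi\partial^{\mathcal B}$ to be zero. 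For the left-hand side of \eqref{eq20200320a} the two non-$B$-linear summands are $j(g)\partial$ and $(-1)^{|X|}\partial j(h)$, and \ref{formula1} gives $[\,\text{LHS},\ell_b\,]=\bigl(j(g)+(-1)^{|h|}j(h)\bigr)\ell_{d^Bb}$. So after your computation you have only shown that the left-hand side equals $\bigl(j(g)+(-1)^{|h|}j(h)\bigr)\partial^{\mathcal B}$; you still owe an argument that this operator vanishes.

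The missing piece requires exactly the part of the hypothesis you discard. Evaluating $f+g\partial+\partial h=0$ only at the $e_\lambda$ yields $(\star)$, but evaluating it at general elements $e_\lambda b$ yields in addition $\bigl(G_{\mu\nu}+(-1)^{|h|}H_{\mu\nu}\bigr)d^Bb=0$ for all $b\in B$ (identity \eqref{1.2} in the paper's proof), equivalently $\bigl(g+(-1)^{|h|}h\bigr)\ell_{d^Bb}=0$ --- this is precisely what makes $g\partial+\partial h$ land in $\E$ so that it can equal $-f$ in the first place. Applying \eqref{20200315x} and \eqref{20200315y} to that identity gives $\bigl(\tfrac{dG_{\mu\nu}}{dX}+(-1)^{|h|}\tfrac{dH_{\mu\nu}}{dX}\bigr)d^Bb=0$, i.e.\ $\bigl(j(g)+(-1)^{|h|}j(h)\bigr)\ell_{d^Bb}=0$, which closes the gap. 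This is why the paper's proof evaluates both the hypothesis and the conclusion on arbitrary elements $\sum_\lambda e_\lambda b_\lambda$ rather than on the basis alone (it also first reduces to $\partial=\partial^{\mathcal B}$ via Lemma~\ref{LR for E}, a harmless simplification that your more direct bookkeeping avoids).
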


Note that~\eqref{eq20200320a} is formally obtained by applying the Leibniz rule for $j$ to $f + g \partial + \partial h =0$.

\begin{proof}
Note from the definition of restricted $j$-operators~\ref{para20201110a} that  $j( \alpha + \beta ) = j(\alpha ) + j(\beta)$ for all $\alpha, \beta \in \E \cup \Diff_B(N)$.
Recall from~\ref{para20200315g} that $\partial$ can be written as  $\partial = \partial ^{\mathcal{B}} + q$ for some $q \in \E$.
Therefore, by Lemma~\ref{LR for E}, it is enough to prove~\eqref{eq20200320a} only in the case that  $\partial = \partial ^{\mathcal{B}}$ for the basis  $\mathcal{B} = \{e_{\lambda}\}$ of $N$.
In this situation, since  $j( \partial ) =0$, we only need to show $j(f) + j(g) \partial + (-1)^{|X|} \partial j(h) =0$.

Let $f(e_{\lambda} ) = \sum _{\mu} e_{\mu} F_{\mu \lambda}$,  $g(e_{\lambda} ) = \sum _{\mu} e_{\mu} G_{\mu \lambda}$, and $h(e_{\lambda} ) = \sum _{\mu} e_{\mu} H_{\mu \lambda}$,
where
$ F_{\mu \lambda},   G_{\mu \lambda},  H_{\mu \lambda}  \in B$.
From~\ref{para20200315g} we know that
$\partial (\sum _{\lambda} e_{\lambda} b_{\lambda})= \sum _{\lambda} (-1) ^{|e_{\lambda }|} e_{\lambda}  d^Bb_{\lambda}$. Hence,
we have
\begin{eqnarray*}
0\!\!\!\!\! &=& \!\!\!\!\!(f+g \partial + \partial h  ) \left( \sum _{\lambda} e_{\lambda} b_{\lambda}\right) \\
 &=&\!\!\!\!\! \sum_{\mu, \lambda} e_{\mu} F_{\mu \lambda} b_{\lambda} +  \sum_{\mu, \lambda} (-1) ^{|e_{\lambda} | } e_{\mu}  G_{\mu \lambda} d^Bb_{\lambda} + \sum_{\mu, \lambda} (-1) ^{|e_{\mu} | } e_{\mu} d^B(H_{\mu \lambda} b_{\lambda} ) \\
 &=& \!\!\!\!\!\sum_{\mu}\!\! e_{\mu}\!\!\sum _{\lambda}\! \left[\! F_{\mu \lambda} b_{\lambda} + (-1) ^{|e_{\lambda} | } G_{\mu \lambda} d^Bb_{\lambda}  + (-1) ^{|e_{\mu} | }\!\!
 \left(\!d^BH_{\mu \lambda}b_{\lambda} +(-1)^{|H_{\mu \lambda}|} H_{\mu \lambda}d^Bb_{\lambda}\!\right)\!\right].
\end{eqnarray*}
This implies that
\begin{equation*}\label{1.0}
\!\!\sum _{\lambda} \left[\left(F_{\mu \lambda}+(-1) ^{|e_{\mu} | } d^BH_{\mu \lambda}\right) b_{\lambda} +
\left( (-1) ^{|e_{\lambda} | }G_{\mu \lambda} +  (-1)^{|H_{\mu \lambda}|+|e_{\mu} | }H_{\mu \lambda}\right) d^Bb_{\lambda}\right]  = 0
\end{equation*}
which holds for any $b_{\lambda} \in B$ and any $\mu$.
Setting  $b_{\lambda} =1$ if $\lambda=\nu$ and $b_{\lambda}=0$ if $\lambda\neq \nu$,
for all $\mu, \nu$ this equation implies that
\begin{equation}\label{1.1}
F_{\mu \nu} +  (-1) ^{|e_{\mu} | }d^BH_{\mu \nu} = 0.
\end{equation}
Thus, $\sum _{\lambda}\left((-1) ^{|e_{\lambda} | }G_{\mu \lambda} +(-1)^{|H_{\mu \lambda}|+|e_{\mu} | }H_{\mu \lambda}\right)d^Bb_{\lambda}  = 0$.
Since $|H_{\mu \lambda}| + |e_{\mu}| = |h| + |e_{\lambda}|$, we conclude from this equality that
\begin{equation}\label{1.2}
\left(G_{\mu \nu} + (-1)^{|h |} H_{\mu \nu}\right) d^Bb = 0
\end{equation}
for all $b\in B$ and $\mu, \nu$.
Now
\begin{eqnarray*}
& &\left( j(f) + j(g) \partial + (-1)^{|X|} \partial j(h) \right)  \left(\sum _{\lambda} e_{\lambda} b_{\lambda}\right)  \\
&=& \sum _{\mu, \lambda} (-1) ^{|e_{\mu} | |X|} e_{\mu} \frac{dF_{\mu \lambda}}{dX} b_{\lambda }+
\sum _{\mu, \lambda} (-1) ^{|e_{\mu} | |X|+|e_{\lambda}|} e_{\mu} \frac{dG_{\mu \lambda}}{dX} d^Bb_{\lambda }\\
&+&( -1)^{|X|}
 \sum _{\mu,\lambda} (-1) ^{|e_{\mu} | |X| + |e_{\mu}|} e_{\mu}  d^B\left( \frac{dH_{\mu \lambda}}{dX} b_{\lambda }\right) \\
 &=& \sum _{\mu, \lambda} (-1) ^{|e_{\mu} | |X|}  e_{\mu}
\left(  \frac{dF_{\mu \lambda}}{dX}+(-1)^{|X| + |e_{\mu}|} d^B\left( \frac{dH_{\mu \lambda}}{dX}\right) \right) b_{\lambda }  \\
&+ & \sum _{\mu, \lambda} (-1) ^{|e_{\mu} | |X|+|e_{\lambda}|} e_{\mu}  \left(  \frac{dG_{\mu \lambda}}{dX}+(-1)^{|H_{\mu \lambda}|  + |e_{\mu}| -|e_{\lambda}|}\frac{dH_{\mu \lambda}}{dX} \right) d^Bb_{\lambda }.
 \end{eqnarray*}
Note that from~\eqref{20200315y} and~\eqref{1.1} we have
$$
\frac{dF_{\mu \lambda}}{dX}+(-1)^{|X|+|e_{\mu}|}d^B\left( \frac{dH_{\mu \lambda}}{dX} \right)= 0.
$$
Also it follows from~\eqref{20200315x} and~\eqref{1.2} that for all $b \in B$
$$
\left( \frac{dG_{\mu \lambda}}{dX}+(-1)^{|h|}\frac{dH_{\mu \lambda}}{dX} \right) d^Bb = 0.
$$
Thus,  $j(f) + j(g) \partial + (-1)^{|X|} \partial j(h) = 0$, as desired.
\end{proof}

Paragraph~\ref{prop20200315a} enables us to give a general description of $j$-operators.

\begin{cor}\label{cor20200316a}
The restricted $j$-operator  $j_X^{\mathcal B}\colon \E \cup \Diff _B(N) \to \E$ from~\ref{para20201110a} can be extended to  a mapping $\D \to \D$, which we also denote by $j  = j _X^{\mathcal B}$, as follows:

For all $f + \partial g \in \D= \E + \partial\circ \E$, let  $j (f + \partial g ) = j(f) + j ( \partial) g + (-1) ^{|X|} \partial j(g)$  and for all $f'+ g' \partial  \in \D=\E + \E \circ \partial$, let $j(f'+ g'\partial)=j(f') +j(g')  \partial + (-1) ^{|X||g'|} g' j( \partial)$.

Also, if $f + \partial g= f'+ g' \partial$ in $\D$, then $j(f + \partial g)=j(f'+ g' \partial)$. In other words, for every element $\varphi\in \D$, the element $j(\varphi)$ is independent of the description of $\varphi$.
\end{cor}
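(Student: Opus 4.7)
The plan is to reduce everything to Proposition~\ref{LR for general}. That proposition already packages the precise Leibniz-style relation we need: whenever a combination of the form $f + g\partial + \partial h$ vanishes in $\D$, the ``derivative'' combination $j(f) + j(g)\partial + (-1)^{|X||g|}g\,j(\partial) + j(\partial)h + (-1)^{|X|}\partial\, j(h)$ vanishes as well. By specializing $g$ or $h$ to $0$, and by exploiting additivity of the restricted $j$-operator from~\ref{para20201110a} on $\E$ and on $\Diff_B(N)$ (which is immediate from the $R$-linearity of $d/dX$ and the coordinate formula~\eqref{def j-op}), all three desired facts will follow.

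First I would show each of the two candidate formulas is well-defined. Suppose $f_1, f_2, g_1, g_2 \in \E$ satisfy $f_1 + \partial g_1 = f_2 + \partial g_2$, i.e.\ $(f_1-f_2) + \partial(g_1-g_2) = 0$. Applying Proposition~\ref{LR for general} with $g = 0$ and with $(f,h) = (f_1-f_2,\, g_1-g_2)$ gives
\[
j(f_1-f_2) + j(\partial)(g_1-g_2) + (-1)^{|X|}\partial\, j(g_1-g_2) = 0,
\]
and additivity of $j$ on $\E$ then yields the equality of the two proposed values of $j(f_1+\partial g_1)$ and $j(f_2+\partial g_2)$. The analogous argument, using Proposition~\ref{LR for general} with $h = 0$, establishes well-definedness of the second formula $j(f'+g'\partial) = j(f') + j(g')\partial + (-1)^{|X||g'|} g'\, j(\partial)$.

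Next I would show the two formulas agree. If $f + \partial g = f' + g'\partial$ in $\D$, rewrite this as $(f-f') + (-g')\partial + \partial g = 0$ and apply Proposition~\ref{LR for general} with $(f,g,h) = (f-f',\,-g',\,g)$. The conclusion reads
\[
j(f-f') + j(-g')\partial + (-1)^{|X||g'|}(-g')\,j(\partial) + j(\partial)g + (-1)^{|X|}\partial\, j(g) = 0,
\]
which, after additivity and rearrangement, is exactly the identity
\[
j(f) + j(\partial)g + (-1)^{|X|}\partial\, j(g) = j(f') + j(g')\partial + (-1)^{|X||g'|} g'\, j(\partial).
\]
This simultaneously confirms that the two formulas coincide and that $j(\varphi)$ depends only on $\varphi$, not on the chosen decomposition. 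Finally, the image lies in $\D$: each summand $j(f), j(f'), j(\partial)g, g'j(\partial), j(g')\partial \in \E$ (with the last two in fact in $\E \circ \E \subseteq \E$ and similarly $j(g')\partial \in \E\circ\Diff_B(N)\subseteq \D$ by~\ref{formula1}), and $\partial\, j(g) \in \Diff_B(N)\circ\E \subseteq \D$, again by~\ref{formula1}; the sum then lies in $\D$ by~\ref{prop20200315a}.

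The main obstacle I anticipate is bookkeeping: making sure the instance of Proposition~\ref{LR for general} invoked for ``agreement of the two formulas'' exactly matches the sign conventions written in the corollary (in particular that $|{-g'}| = |g'|$ so that $(-1)^{|X||{-g'}|}(-g') = -(-1)^{|X||g'|}g'$), and verifying cleanly that the restricted $j$-operator is additive on $\E\cup\Diff_B(N)$ in the sense needed (i.e.\ that $j(\alpha+\beta) = j(\alpha)+j(\beta)$ whenever both sides are defined), so that the linearization steps above are justified. Once these verifications are in place, the corollary falls out of Proposition~\ref{LR for general} with essentially no new computation.
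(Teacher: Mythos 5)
Your proposal is correct and follows exactly the paper's route: the paper's entire proof is the one-line observation that uniqueness of $j(\varphi)$ follows from Proposition~\ref{LR for general}, and you have simply spelled out the three specializations ($g=0$, $h=0$, and $(f,g,h)=(f-f',-g',g)$) together with the additivity of the restricted operator that make that reduction work. The sign bookkeeping and the verification that the output lands in $\D$ are both handled correctly.
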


\begin{proof}
For $\varphi \in \D$, uniqueness of $j(\varphi)$ follows from Proposition~\ref{LR for general}.
\end{proof}

\begin{defn} \label{def j-op}
The mapping  $j_X^{\mathcal{B}}\colon \D \to \D$ defined in Corollary~\ref{cor20200316a} is called the \emph{$j$-operator of $X$ with respect to the basis $\mathcal{B}$} for $N$ as a free $B$-module.
\end{defn}

Note that the $j$-operator depends on the choice of a basis and a variable $X$.

\begin{lem}\label{j-operator properties}
The $j$-operator  $j =  j _X^{\mathcal{B}}\colon \D\to \D$  satisfies the following properties:
\begin{enumerate}[\rm(a)]
\item  $j$ is a graded mapping of degree $-|X|$, that is,  $|j(\alpha )| = |\alpha | -|X|$ for  a homogeneous $\alpha  \in \D$.
\item  $j (X) = 1$ and  $j(a)=0$  for $a \in A$.
%\item   $j ( fg ) = j( f ) g + (-1)^{|f| ||X|} f  j(g )$  for  $f, g \in \E$.
\item  If $\partial \in \Diff _B(N)$ with $\partial ^2 =0$, then  $[j(\partial), \partial] =0$.
\end{enumerate}
\end{lem}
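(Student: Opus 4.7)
The plan is to dispatch parts (a) and (b) by direct unpacking of the defining formula \eqref{def j-op}, and to reduce part (c) to Proposition~\ref{LR for general} and Lemma~\ref{LR for E} via the canonical splitting $\partial = \partial^{\mathcal{B}} + q$ from~\ref{para20200315g}.

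For (a), if $\alpha \in \E \cup \Diff_B(N)$ is homogeneous and $\alpha(e_\lambda) = \sum_\mu e_\mu b_{\mu\lambda}$, then $|b_{\mu\lambda}| = |\alpha| + |e_\lambda| - |e_\mu|$, and the operator $\frac{d}{dX}$ lowers degree by $|X|$ (visible from~\ref{para20200315d}). Hence each summand of $j_X^{\mathcal{B}}(\alpha)(e_\lambda)$ in \eqref{def j-op} has degree $|\alpha| + |e_\lambda| - |X|$, proving the claim for the restricted $j$-operator. The extension to $\D$ in Corollary~\ref{cor20200316a} preserves the degree because both $j(\partial)g$ and $\partial j(g)$ have degree $|\partial g| - |X|$.

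For (b), $X \in B$ acts on $N$ through $\ell_X \in \E$. The graded commutativity in~\ref{para20200329b} gives $\ell_X(e_\lambda) = X e_\lambda = (-1)^{|X||e_\lambda|} e_\lambda X$, so the matrix coefficients are $b_{\mu\lambda} = \delta_{\mu\lambda}(-1)^{|X||e_\lambda|} X$. Substituting into \eqref{def j-op}, the two sign factors cancel and $\frac{dX}{dX} = 1$, giving $j(\ell_X)(e_\lambda) = e_\lambda$, i.e.\ $j(X) = 1$. For $a \in A$ the same computation applies, but now $\frac{da}{dX} = 0$ by Lemma~\ref{d/dX}, whence $j(\ell_a) = 0$.

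Part (c) is the main hurdle. I would write $\partial = \partial^{\mathcal{B}} + q$ with $q \in \E$ (\ref{para20200315g}). A short check using $(d^B)^2 = 0$ shows that $(\partial^{\mathcal{B}})^2 = 0$, so the hypothesis $\partial^2 = 0$ rewrites as the relation
\begin{equation*}
q^2 + q\partial^{\mathcal{B}} + \partial^{\mathcal{B}} q = 0
\end{equation*}
in $\D$, with $q^2, q \in \E$. Applying Proposition~\ref{LR for general} with $f = q^2$, $g = h = q$, $\partial = \partial^{\mathcal{B}}$, and using $j(\partial^{\mathcal{B}}) = 0$ (observation after~\ref{para20201110a}), I obtain
\begin{equation*}
j(q^2) + j(q)\partial^{\mathcal{B}} + (-1)^{|X|}\partial^{\mathcal{B}} j(q) = 0.
\end{equation*}
Lemma~\ref{LR for E} expands $j(q^2) = j(q)q + (-1)^{|X||q|} q j(q)$. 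The crucial observation is that $|q| = |\partial| = -1$, so $(-1)^{|X||q|} = (-1)^{|X|}$; the two signs agree, allowing the identity to factor as
\begin{equation*}
j(q)(q + \partial^{\mathcal{B}}) + (-1)^{|X|}(q + \partial^{\mathcal{B}}) j(q) = 0,
\end{equation*}
that is, $j(q)\partial + (-1)^{|X|}\partial j(q) = 0$. Since $j(\partial) = j(\partial^{\mathcal{B}}) + j(q) = j(q)$ and the Koszul sign for the graded commutator computes (with $|j(\partial)| = -1 - |X|$ from part (a)) to $(-1)^{|j(\partial)||\partial|} = -(-1)^{|X|}$, this is precisely $[j(\partial), \partial] = 0$. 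The subtlety to watch is that the two a priori distinct Koszul signs produced by Proposition~\ref{LR for general} and Lemma~\ref{LR for E} collapse to the same one only because $\partial$, and therefore $q$, is odd; without this parity coincidence the expression would not factor cleanly through $\partial = \partial^{\mathcal{B}} + q$.
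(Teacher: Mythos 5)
Your proof is correct, and for part (c) it takes a genuinely different route from the paper's. (Parts (a) and (b) coincide with the paper's treatment: the paper declares (a) obvious and proves (b) by exactly your sign cancellation.) For (c), the paper works entirely in coordinates: it writes $\partial(e_\lambda)=\sum_\mu e_\mu b_{\mu\lambda}$, expands $j(\partial)\partial$ and $\partial j(\partial)$ on basis elements, extracts from $\partial^2=0$ the entrywise relation $\sum_\mu b_{\nu\mu}b_{\mu\lambda}+(-1)^{|e_\nu|}d^Bb_{\nu\lambda}=0$, applies $d/dX$ to it via Lemma~\ref{d/dX}, and matches terms against the commutator. You instead split $\partial=\partial^{\mathcal B}+q$, convert $\partial^2=0$ into $q^2+q\partial^{\mathcal B}+\partial^{\mathcal B}q=0$, and feed this into Proposition~\ref{LR for general} together with Lemma~\ref{LR for E}; the identity then factors through $q+\partial^{\mathcal B}=\partial$ because $|q|=|\partial|$ is odd, so the two Koszul signs $(-1)^{|X||q|}$ and $(-1)^{|X|}$ agree. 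All the steps check out: $(\partial^{\mathcal B})^2=0$, the additivity $j(\partial)=j(q)$ (indeed $\partial$ and $q$ have the same matrix entries since $\partial^{\mathcal B}(e_\lambda)=0$), and the final sign $(-1)^{|j(\partial)||\partial|}=-(-1)^{|X|}$ turning $j(\partial)\partial+(-1)^{|X|}\partial j(\partial)=0$ into $[j(\partial),\partial]=0$. Your argument buys a cleaner, coordinate-free derivation that reuses the already-established Leibniz-type identities and makes explicit where the parity of $|\partial|$ enters; it is essentially the specialization to $\partial^2=0$ of the computation the paper later performs as Claim 2 in the proof of Theorem~\ref{LR for D} (and there is no circularity, since Proposition~\ref{LR for general} and Lemma~\ref{LR for E} precede the lemma and do not depend on it). The paper's version is more self-contained at the level of matrix entries but requires redoing a sign-heavy basis computation.
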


 \begin{proof}
(a) is obvious from the definition.

For (b) note that $X\in B$ defines a mapping $X\colon N\to N$ of degree $|X|$ in $\E$ that is given by multiplication from left by $X$, i.e., for all $n\in N$ define $X(n)=Xn=(-1)^{|X||n|}nX$. In particular, for any basis element $e_{\lambda}\in N$ we have $X(e_{\lambda})=(-1)^{|X||e_{\lambda}|}e_{\lambda}X$. By definition of $j$-operator we have the equality $j(X)(e_{\lambda})=(-1)^{|X||e_{\lambda}|+|X||e_{\lambda}|}e_{\lambda}(dX/dX)=e_{\lambda}$. This justifies the equality $j (X) = 1$. The equality $j(a)=0$  for $a \in A$ is checked similarly.

(c) We know from~\ref{Jacobi} that  $[j(\partial), \partial] \in \E$. Thus, to prove part (c), it suffices to show $[j(\partial), \partial] (e_{\lambda})=0$ for all basis elements $e_{\lambda}\in N$.

Let  $\partial (e_{\lambda}) = \sum _{\mu} e_{\mu}b_{\mu \lambda}$. Then
$j(\partial) (e_{\lambda}) = \sum _{\mu} (-1)^{|e_{\mu}||X|} e_{\mu}(d b_{\mu \lambda}/dX)$. Hence,
\begin{gather*}
\partial j(\partial) (e_{\lambda}) = \sum _{\nu, \mu} (-1)^{|e_{\mu}||X|} e_{\nu} b_{\nu \mu} \frac{d b_{\mu \lambda}}{dX} + \sum _{\mu} (-1)^{|e_{\mu}||X| + |e_{\mu}|} e_{\mu}   d^B\left(\frac{d b_{\mu \lambda}}{dX}\right) \\
j(\partial)\partial (e_{\lambda}) =\sum _{\nu, \mu} (-1)^{|e_{\nu}||X|} e_{\nu} \frac{d b_{\nu \mu}}{dX}b_{\mu \lambda}.
\end{gather*}
On the other hand, $\partial ^2=0$  means that $\sum _{\nu, \mu} e_{\nu} b_{\nu \mu} b_{\mu \lambda} + \sum _{\mu} (-1) ^{|e_{\mu}|} e_{\mu} d^Bb_{\mu \lambda} =0$.
Hence, $\sum _{\mu} b_{\nu \mu} b_{\mu \lambda} + (-1) ^{|e_{\nu}|} d^Bb_{\nu \lambda} =0$
for all $\nu, \lambda$.
Taking  $d/dX$ of this equation, by Lemma~\ref{d/dX} we have
$$
\sum _{\mu} \frac{db_{\nu \mu}}{dX}  b_{\mu \lambda} + \sum _{\mu} (-1) ^{|b_{\nu \mu}||X|} b_{\nu \mu}\frac{db_{\mu \lambda}}{dX}  + (-1) ^{|e_{\nu}|+|X|}  d^B\left(\frac{db_{\nu \lambda}}{dX}\right) =0.
$$
Note  that  $|j(\partial)| = -|X|-1$ and
$|b_{\nu \mu}| = |e_{\mu}| -|e_{\nu}|-1$. Therefore, we have
\begin{eqnarray*}
[j(\partial), \partial] (e_{\lambda}) &=&j(\partial) \partial (e_{\lambda}) - (-1)^{-|X|-1}\partial j(\partial) (e_{\lambda}) \\
&=& \sum _{\nu, \mu} (-1)^{|e_{\nu}||X|} e_{\nu}  \bigg[
\frac{d b_{\nu \mu}}{dX}b_{\mu \lambda}+ (-1)^{(|e_{\mu}|-|e_{\nu}|-1)|X|} b_{\nu \mu} \frac{d b_{\mu \lambda}}{dX}\\
&+& (-1)^{|e_{\nu}|-|X|}d^B\left(\frac{d b_{\nu \lambda}}{dX}\right)\bigg] \\
&=& 0
\end{eqnarray*}
as desired.
\end{proof}

The next result proves a version of the Leibniz rule for the $j$-operators.

\begin{thm}\label{LR for D}
For all $\alpha, \beta \in \D$,  the $j$-operator  $j _X^{\mathcal{B}}\colon \D\to \D$  satisfies the equality:
\begin{equation}\label{LR for alpha}
j(\alpha \beta) = j(\alpha) \beta + (-1)^{|X||\alpha|} \alpha j(\beta).
\end{equation}
\end{thm}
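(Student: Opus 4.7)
The plan is to reduce the Leibniz rule on $\D$ to the already-established Leibniz rule on $\E$ (Lemma~\ref{LR for E}) via the decomposition $\D = \E + \E \circ \partial$ from~\ref{prop20200315a}. I would fix $\partial = \partial^{\mathcal{B}}$, the free differential associated to $\mathcal{B}$. This choice is convenient because $j(\partial^{\mathcal{B}}) = 0$ (as noted after~\ref{para20201110a}), so Corollary~\ref{cor20200316a} specializes to the simple formula $j(f + g\partial) = j(f) + j(g)\partial$ for $f,g\in\E$; moreover, a direct computation on basis elements using~\eqref{eq20200315v} together with $(d^B)^2 = 0$ yields $\partial^2 = 0$, which will kill several terms later on.

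Since both sides of~\eqref{LR for alpha} are $R$-bilinear in $(\alpha, \beta)$, it suffices to verify the identity on the generating set $\E \cup \E\partial$ of $\D$, giving four cases. The case $\alpha, \beta \in \E$ is exactly Lemma~\ref{LR for E}. The three mixed cases all hinge on the following auxiliary identity, which I would establish first: for every $f \in \E$,
\begin{equation*}
j([\partial, f]) \;=\; (-1)^{|X|}\partial\, j(f) \;-\; (-1)^{|f|}j(f)\,\partial.
\end{equation*}
To obtain this, note that $[\partial, f] \in \E$ by~\ref{Jacobi} and that $-[\partial, f] + (-1)^{|f|+1}f\partial + \partial f = 0$ in $\D$; applying Proposition~\ref{LR for general} with $f' = -[\partial, f]$, $g' = (-1)^{|f|+1}f$, $h' = f$, and using $j(\partial) = 0$, produces exactly the displayed formula.

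For the remaining three cases I would use the graded commutator move $\partial h = [\partial, h] + (-1)^{|h|}h\partial$ (recall $|\partial| = -1$) to push $\partial$ to the right, bringing $\alpha\beta$ into the canonical form $F + G\partial$ with $F, G \in \E$, and then apply Corollary~\ref{cor20200316a} followed by Lemma~\ref{LR for E} and the auxiliary identity. For example, with $\alpha = g\partial$ and $\beta = f \in \E$, one has $\alpha\beta = g[\partial, f] + (-1)^{|f|}gf\partial$; expanding $j(\alpha\beta)$ via these tools, two cross terms cancel and the remainder regroups (using $[\partial, f] = \partial f - (-1)^{|f|}f\partial$ once more) into $j(g)\partial f + (-1)^{|X||g\partial|}g\partial\, j(f) = j(\alpha)\beta + (-1)^{|X||\alpha|}\alpha j(\beta)$. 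The $\E\partial \cdot \E\partial$ case proceeds similarly, with $\partial^2 = 0$ eliminating the otherwise awkward $g_1 g_2\partial^2$ contribution.

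The main technical obstacle will be sign bookkeeping: tracking $|X|$, the convention $|\partial| = -1$ (so that $(-1)^{|X||\partial|} = (-1)^{|X|}$), and the degrees of composites such as $g\partial$ through repeated applications of Lemma~\ref{LR for E} and the auxiliary identity. Beyond that, the reduction is essentially formal once one commits to the basis-dependent form $\partial = \partial^{\mathcal{B}}$.
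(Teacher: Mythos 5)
Your proposal is correct and follows essentially the same route as the paper: both reduce the Leibniz rule on $\D$ to Lemma~\ref{LR for E} via the decomposition $\D = \E + \E\circ\partial$ together with the consistency statement of Proposition~\ref{LR for general}/Corollary~\ref{cor20200316a}. Your specialization to $\partial = \partial^{\mathcal{B}}$ (so that $j(\partial)=0$ and $\partial^2=0$) is a legitimate streamlining that lets you skip the paper's Claim~2 computing $j(\partial^2)$, and the sign bookkeeping in your mixed cases (e.g.\ $(-1)^{|X||g|+|X|} = (-1)^{|X||g\partial|}$) does check out.
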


\begin{proof}
Write $j =  j _X^{\mathcal{B}}$, and $\D = \E + \partial \circ \E= \E + \E \circ \partial$ for some $\partial \in \Diff _B(N)$.
%Note that if $\alpha, \beta \in \E$, then the assertion has been shown in Lemma \ref{LR for E}.
To see the equality~\eqref{LR for alpha}, we prove a series of claims as follows.

Claim 1: for all $f, g \in \E$, the following equality holds:
\begin{equation}\label{LR1}
j ( f \partial g) =  j(f) \partial g +(-1)^{|f||X|}f j(\partial) g + (-1)^{(|f|+1)|X|}f\partial j(g).
\end{equation}
To see this, since $\partial g \in \D= \E + \E \circ \partial$, we write $\partial g = g_0 + g_1 \partial$ for some $g_0, g_1 \in \E$.
It follows from Corollary~\ref{cor20200316a} that
\begin{eqnarray*}
j( f \partial g) &=& j(fg_0) + j(fg_1 \partial) \\
&=& j(f)g_0+(-1)^{|f||X|}f j(g_0) + j(fg_1) \partial + (-1)^{(|f|+|g_1|)|X|}fg_1j(\partial) \\
&=& j(f)(g_0+g_1 \partial) + (-1)^{|f||X|}f \left(j(g_0) +j(g_1)\partial +(-1)^{|g_1||X|}g_1 j(\partial) \right) \\
&=& j(f) \partial g + (-1)^{|f||X|} f j (\partial g)\\
&=& j(f) \partial g + (-1)^{|f||X|}f \left( j(\partial)g + (-1)^{|X|} \partial j(g)\right).
\end{eqnarray*}

Claim 2: the following equality holds:
\begin{equation}\label{LR2}
j (\partial ^2 ) = j(\partial) \partial + (-1)^{|X|}  \partial j( \partial)
\end{equation}
To see this, note that if we set $h := \partial - \partial^{\mathcal{B}}$, then as we mentioned in~\ref{para20200315g}, $h$  belongs to $\E$. We also have
$\partial ^2 = h \partial ^{\mathcal{B}} + \partial ^{\mathcal{B}}h + h^2$  because $(\partial^{\mathcal{B}})^2=0$.
Then using Corollary~\ref{cor20200316a} and Lemma \ref{j-operator properties}(c) we have
\begin{eqnarray*}
j( \partial ^2 ) &=& j(h) \partial ^{\mathcal{B}} +(-1)^{|X|}  h j( \partial ^{\mathcal{B}}) +j( \partial ^{\mathcal{B}}) h +(-1)^{|X|}  \partial ^{\mathcal{B}} j(h) + j(h)h +(-1)^{|X|} hj(h) \\
&=& ( j(h) + j(\partial ^{\mathcal{B}} )) (h + \partial ^{\mathcal{B}}) +(-1) ^{|X|} (h + \partial ^{\mathcal{B}})( j(h) + j( \partial ^{\mathcal{B}})) \\
&=&  j(\partial) \partial + (-1)^{|X|}  \partial j( \partial).
\end{eqnarray*}

Claim 3: the following equality holds for all $f, g \in \E$:
\begin{align}\label{LR3}
j( f \partial g \partial ) = j(f) \partial g \partial +(-1)^{|f||X|} f j(\partial ) g \partial &+ (-1)^{(|f|+1) |X|}f \partial j(g) \partial \notag\\& + (-1)^{(|f|+1+|g|)|X|}f \partial g j(\partial).
\end{align}
To see this claim, since $g \partial\in \D$, write $g \partial = g_0 + \partial g_1$ for some $g_1, g_1 \in \E$. By the equalities~\eqref{LR1} and~\eqref{LR2} we get:
\begin{eqnarray*}
&&j( f \partial g \partial )\\ &=& j(f\partial g_0) + j( f \partial ^2 g_1) \\
 &=& j(f) \partial g_0 + (-1)^{|f||X|} f j(\partial) g_0 +(-1)^{(|f|+1)|X|}f \partial j(g_0) +  j(f) \partial ^2 g_1 \\ &+&
 (-1) ^{|f||X|} f j(\partial ^2) g_1 +  (-1)^{|f||X|} f\partial ^2 j(g_1)  \\
  &=&   j(f) \left(\partial g_0 + \partial ^2 g_1 \right) + (-1)^{|f||X|}f \left(j(\partial)g_0+(-1)^{|X|}\partial j(g_0) + j(\partial ^2) g_1 + \partial ^2 j(g_1)\right)  \\
    &=&  j(f) \partial g \partial + (-1)^{|f||X|}f\left( j(\partial)g_0+j(\partial)\partial g_1\right) \\
    &+&(-1)^{|f||X|}f\left[(-1)^{|X|}\partial j(g_0)+ (-1)^{|X|} \partial j(\partial) g_1 +  \partial ^2 j(g_1) \right]\\
    &=&  j(f) \partial g \partial +(-1)^{|f||X|} f j(\partial ) g \partial + (-1)^{(|f|+1) |X|}f \partial j(g \partial )
\end{eqnarray*}
where, in the second equality  we use the fact that $\partial ^2 \in \E$, as we discussed in \ref{yuji20201115a}.

Now consider the representations $\alpha=f_1+g_1\partial\in \D$ and $\beta=f_2+g_2\partial\in \D$, for some $f_1, f_2, g_1, g_2\in \E$. Then,~\eqref{LR for alpha} follows from~\eqref{LR1} and~\eqref{LR3}.
\end{proof}

We conclude this section by the following remark on $j$-operators.

\begin{para}\label{para20201112a}
We do not know how to generalize the notion of $j$-operator $j_X$ for one variable to the case where we have more than one variable in a way that good properties of the $j$-operators are preserved. More precisely, for a basis  $\mathcal{B}$ of $N$ and a set of variables $X_1,X_2,\ldots$, if we define the $j$-operator $j_{X_i}^\mathcal{B}$ with respect to a variable $X_i$ to be a map from $\D$ to itself that is described as in Definition~\ref{def j-op} using  the partial derivative operator $\partial/\partial X_i$ instead of  $d/dX$, then the derivation property from Lemma~\ref{LR for E} is not preserved. For example, consider the DG algebra extension $R\langle X_1, X_2, Y\mid dX_1=ab, dX_2=ac, dY=cX_1-bX_2\rangle$ of $R$, where $|X_1|=1=|X_2|$, $|Y|=2$, and $a,b,c\in R$ with $a$ non-unit. Then $j_{X_1}(dY)=c$, which is not a boundary in $B$. Hence, it follows from~\cite[Lemma 1.3.2]{GL} that $j_{X_1}$ is not a derivation.
\end{para}

\section{Characterization of $j$-operators}\label{sec20200329a}

This section is devoted to a characterization of $j$-operators in the set of weak $j$-operators; see Definition~\ref{def of Der} and Theorem~\ref{characterization} below. We will use this characterization in our (weak) lifting results later in the paper.

In this section we use the notation from Sections~\ref{sec20200314b} and~\ref{sec20200314c}.

\begin{defn}\label{def of Der}
Let $\Der _A(\D)$  be the set of  finite sums of graded $R$-linear mappings $\Delta\colon \D \to \D$ that satisfy the following properties:
\begin{enumerate}[(i)]
\item
$\Delta (a) = 0$  for  all $a \in A$;
\item
$\Delta (\E \cup \Diff_B(N)) \subseteq  \E$;
%{\color{blue}(I don't know if this is necessary.) }
\item
$\Delta (\alpha\beta) = \Delta (\alpha) \beta + (-1) ^{|\Delta||\alpha|} \alpha\Delta(\beta)$
for all  $\alpha, \beta \in \D$.
\end{enumerate}
We call elements of   $\Der  _A(\D)$  \emph{weak $j$-operators}. Note that every weak $j$-operator $\Delta \in \Der _A(\D)$ is $A$-linear as well.
\end{defn}

Next results contain a list of properties that are satisfied by elements in $\Der _A(\D)$.

\begin{lem}\label{lem20200318s}
\begin{enumerate}[\rm(a)]
\item
All $j$-operators belong to  $\Der _A(\D)$.
In fact,  $j _X^{\mathcal B}$  is a homogeneous element of $\Der _A (\D)$ of degree $|j_X^{\mathcal B}| = -|X|$.
\item
If  $f \in \E$, then  $ad (f)$ is an element of  $\Der _A(\D)$.
If  $f$ is homogeneous, then  $ad (f)$ is also homogeneous with  $| ad (f) | = |f|$.
\item
If $\Delta, \Delta' \in \Der_A(\D)$, then $\Delta \pm \Delta', [\Delta, \Delta '] \in \Der _A(\D)$.
\item
If $\Delta \in \Der_A(\D)$ and   $|\Delta|$ is odd, then $\Delta ^2 \in \Der _A(\D)$.
\end{enumerate}
\end{lem}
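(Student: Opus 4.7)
The plan is to verify the three axioms of Definition~\ref{def of Der} in each of the four cases, invoking the computations already carried out in Section~\ref{sec20200314c}.

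For (a), axiom (i) is exactly Lemma~\ref{j-operator properties}(b); axiom (ii) is built into the construction of $j_X^{\mathcal B}$ in~\ref{para20201110a}, together with Corollary~\ref{cor20200316a} for the extension to $\D$; and axiom (iii) is precisely Theorem~\ref{LR for D}. The homogeneity assertion $|j_X^{\mathcal B}|=-|X|$ is read off directly from the defining formula in~\ref{para20201110a}.

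For (b), with $f\in\E$, axiom (i) reduces to $[f,\ell_a]=0$ for $a\in A\subset B$, which is recorded in~\ref{para20200314b}. Axiom (ii) is trivial on $\E$ and holds on $\Diff_B(N)$ by~\ref{Jacobi}. For axiom (iii), I would establish the graded Leibniz rule $[f,\alpha\beta]=[f,\alpha]\beta+(-1)^{|f||\alpha|}\alpha[f,\beta]$ by adding and subtracting $(-1)^{|f||\alpha|}\alpha f\beta$ in $f\alpha\beta-(-1)^{|f|(|\alpha|+|\beta|)}\alpha\beta f$. Homogeneity of $ad(f)$ with $|ad(f)|=|f|$ is clear.

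For (c), closure under $\pm$ is immediate since each axiom is preserved by finite sums. For the bracket $[\Delta,\Delta']$, I would reduce to homogeneous $\Delta,\Delta'$ by bilinearity; axioms (i) and (ii) then pass to both compositions $\Delta\Delta'$ and $\Delta'\Delta$, hence to $[\Delta,\Delta']$, and axiom (iii) is the standard sign-chase showing that the graded commutator of two graded derivations of degrees $p,q$ is a graded derivation of degree $p+q$.

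Part (d) is the one place that requires genuine work and is the main obstacle. Applying axiom (iii) for $\Delta$ twice to $\Delta^2(\alpha\beta)$ gives
\[
\Delta^2(\alpha)\beta+\bigl((-1)^{|\Delta|(|\alpha|+|\Delta|)}+(-1)^{|\Delta||\alpha|}\bigr)\Delta(\alpha)\Delta(\beta)+\alpha\Delta^2(\beta).
\]
When $|\Delta|$ is odd, $|\Delta|^2$ is also odd, so the two signs in front of $\Delta(\alpha)\Delta(\beta)$ are opposite; the cross-term cancels, leaving the Leibniz rule in the even degree $2|\Delta|$, which is exactly axiom (iii) for $\Delta^2$. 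Axioms (i) and (ii) pass through two iterations of $\Delta$ trivially. The sign cancellation is the sole nontrivial point; everything else is routine bookkeeping.
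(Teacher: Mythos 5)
Your proposal is correct and follows essentially the same route as the paper, which simply cites Lemma~\ref{j-operator properties}, Corollary~\ref{cor20200316a}, and the Leibniz rule (Lemma~\ref{LR for E}/Theorem~\ref{LR for D}) for (a), the facts in~\ref{para20200314b}, \ref{Jacobi}, and~\ref{prop20200315a} for (b), and declares (c) and (d) straightforward. Your sign computation for (d) — the cancellation of the cross term $\Delta(\alpha)\Delta(\beta)$ when $|\Delta|$ is odd — is exactly the "straightforward" verification the paper leaves to the reader, and it is carried out correctly.
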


\begin{proof}
(a) follows from Lemma~\ref{LR for E}, Corollary~\ref{cor20200316a}, and Lemma~\ref{j-operator properties}.

(b) follows from~\ref{para20200314b}, \ref{Jacobi}, and~\ref{prop20200315a}.

(c) and (d) are straightforward.
\end{proof}

%The following two lemmas are useful in the later argument in this paper.

\begin{lem}\label{useful lemma}
If $\Delta \in \Der _A(\D)$ and  $f \in \E$ are homogeneous, then $[\Delta, ad (f)] = ad (\Delta(f))$.
\end{lem}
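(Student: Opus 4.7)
The plan is to verify the equality by evaluating both sides on an arbitrary homogeneous element $g \in \D$. Both operators make sense as graded maps $\D \to \D$: $ad(f) \in \Der_A(\D)$ by Lemma~\ref{lem20200318s}(b), hence $[\Delta, ad(f)] \in \Der_A(\D)$ by part~(c) of the same lemma, while $\Delta(f) \in \E$ by property~(ii) of Definition~\ref{def of Der}, so $ad(\Delta(f)) \in \Der_A(\D)$ again by Lemma~\ref{lem20200318s}(b). Note that $|ad(f)| = |f|$ and $|\Delta(f)| = |\Delta| + |f|$, so both sides are homogeneous of degree $|\Delta| + |f|$.

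The main computation is direct. For a homogeneous $g \in \D$, write
\[
[\Delta, ad(f)](g) = \Delta([f,g]) - (-1)^{|\Delta||f|}[f,\Delta(g)].
\]
The plan is to expand $\Delta([f,g]) = \Delta(fg) - (-1)^{|f||g|}\Delta(gf)$ using the graded Leibniz rule from Definition~\ref{def of Der}(iii):
\[
\Delta(fg) = \Delta(f)g + (-1)^{|\Delta||f|}f\Delta(g), \qquad \Delta(gf) = \Delta(g)f + (-1)^{|\Delta||g|}g\Delta(f),
\]
and then expand $[f,\Delta(g)] = f\Delta(g) - (-1)^{|f|(|\Delta|+|g|)}\Delta(g)f$. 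After substitution the two occurrences of $f\Delta(g)$ cancel (both having sign $(-1)^{|\Delta||f|}$), and the two occurrences of $\Delta(g)f$ cancel as well (using $(-1)^{|\Delta||f|+|f||\Delta|+|f||g|} = (-1)^{|f||g|}$). What remains is
\[
\Delta(f)g - (-1)^{|f||g|+|\Delta||g|}g\Delta(f) = \Delta(f)g - (-1)^{(|\Delta|+|f|)|g|}g\Delta(f) = [\Delta(f), g] = ad(\Delta(f))(g),
\]
which is the desired identity.

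There is no real obstacle here: the argument is a graded Jacobi-type computation, and the only point requiring attention is careful sign bookkeeping, in particular the identity $(-1)^{2|\Delta||f|}=1$ used to collapse the middle signs. Once both sides are expanded and the cancellations are performed, the equality is manifest. By $R$-linearity (indeed $A$-linearity) of both sides, it extends from homogeneous $g$ to all of $\D$, completing the proof.
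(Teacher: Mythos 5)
Your proof is correct and follows essentially the same route as the paper: both evaluate $[\Delta, ad(f)]$ on an arbitrary element of $\D$, expand the brackets, apply the graded Leibniz rule from Definition~\ref{def of Der}(iii), and observe that the $f\Delta(g)$ and $\Delta(g)f$ terms cancel, leaving $ad(\Delta(f))(g)$. The sign bookkeeping in your cancellations checks out.
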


\begin{proof}
For all $\alpha\in \D$ we have
\begin{eqnarray*}
[\Delta, ad (f)] (\alpha)\!\!\!\!\! &=&\!\!\!\!\! \Delta [f, \alpha] - (-1) ^{|\Delta ||f|}[f, \Delta (\alpha)] \\
&=&\!\!\!\!\!\Delta\left(f\alpha - (-1)^{|f||\alpha|} \alpha f\right)\! -\! (-1) ^{|\Delta ||f|}\!\left(f\Delta (\alpha) - (-1)^{|f|(|\Delta| + |\alpha|)} \Delta (\alpha)f\right)  \\
&=&\!\!\!\!\! \Delta (f)\alpha - (-1)^{|\alpha|(|\Delta| + |f|)} \alpha \Delta(f) \\
&=&\!\!\!\! ad (\Delta(f) )(\alpha)
\end{eqnarray*}
as desired.
\end{proof}

The following notation will be used later in the paper.

\begin{para}\label{notn20200320a}
For a basis  $\mathcal B = \{ e_{\lambda}\}$  of  $N$ as a free $B$-module, the set of \emph{orthogonal idempotents corresponding to $\mathcal B$} is a set $\{ \epsilon _{\lambda} \} \subseteq \E$, where  $\epsilon _{\lambda}$  are the $B$-linear transformations defined by the equalities
$$
\epsilon _{\lambda} (e_{\mu}) = \begin{cases} e _{\lambda} & (\lambda = \mu) \\ 0 & (\lambda \not= \mu).  \end{cases}
$$
Note that   $\epsilon _{\lambda} ^2 = \epsilon _{\lambda}$ and $\epsilon _{\lambda}\epsilon _{\mu}=0$ if $\lambda \not= \mu$.
\end{para}

\begin{prop}\label{kernel of ad}
Let  $f \in \E$, and assume that  $ad (f) =0$.
Then $f$ is a cycle in $B$ meaning that there is an element  $b \in B$ satisfying  $d^B (b) = 0$ and $f = \ell _b$.
\end{prop}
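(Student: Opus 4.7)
The plan is to reduce the condition $\mathrm{ad}(f) = 0$ (equivalently, $[f,\alpha] = 0$ for every $\alpha \in \D$) to very concrete information about the action of $f$ on a basis of $N$, by testing it against carefully chosen elements of $\D$ coming from $\E$ and from $\Diff_B(N)$. Fix a basis $\mathcal{B} = \{e_\lambda\}$ of $N$ and write $f(e_\lambda) = \sum_\mu e_\mu F_{\mu\lambda}$ with $F_{\mu\lambda} \in B$ and $|F_{\mu\lambda}| = |f| + |e_\lambda| - |e_\mu|$.

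First, I would test $f$ against the orthogonal idempotents $\epsilon_\lambda \in \E$ of \ref{notn20200320a}. Since $|\epsilon_\lambda| = 0$, the equation $[f,\epsilon_\lambda] = 0$ becomes $f\epsilon_\lambda = \epsilon_\lambda f$, and evaluating both sides on the basis elements immediately forces $F_{\mu\lambda} = 0$ whenever $\mu \neq \lambda$. Thus $f$ is ``diagonal'', with $f(e_\lambda) = e_\lambda F_{\lambda\lambda}$ for every $\lambda$.

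Next I would exploit the $B$-linear ``transposition'' maps $g_{\mu\lambda} \in \E$ defined by $g_{\mu\lambda}(e_\lambda) = e_\mu$ and $g_{\mu\lambda}(e_\nu) = 0$ for $\nu \neq \lambda$, which are homogeneous of degree $|e_\mu|-|e_\lambda|$. Computing $[f, g_{\mu\lambda}](e_\lambda) = 0$ yields the relation
$$F_{\mu\mu} \;=\; (-1)^{|f|(|e_\mu|-|e_\lambda|)} F_{\lambda\lambda}.$$
Fixing any index $\lambda_0$ and setting $b := (-1)^{|f||e_{\lambda_0}|} F_{\lambda_0\lambda_0} \in B$, this relation rewrites as $F_{\lambda\lambda} = (-1)^{|b||e_\lambda|} b$ for every $\lambda$. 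Consequently
$$f(e_\lambda) \;=\; e_\lambda F_{\lambda\lambda} \;=\; (-1)^{|b||e_\lambda|} e_\lambda b \;=\; b e_\lambda \;=\; \ell_b(e_\lambda),$$
and $B$-linearity of $f$ and $\ell_b$ gives $f = \ell_b$ on all of $N$.

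It remains to show that $b$ is a cycle, and for this I would bring in the differential side of $\D$ via the free differential $\partial^{\mathcal{B}}$ of \ref{para20200315g}, which lies in $\Diff_B(N) \subseteq \D$. The equation $[f, \partial^{\mathcal{B}}] = 0$ applied to a general element $e_\lambda c$ for $c \in B$, using $|F_{\lambda\lambda}| = |f|$ and the Leibniz rule for $d^B$, collapses (after the $e_\lambda F_{\lambda\lambda} d^B c$ terms cancel) to $e_\lambda\, d^B(F_{\lambda\lambda})\cdot c = 0$; setting $c=1$ yields $d^B(F_{\lambda\lambda}) = 0$, hence $d^B(b) = 0$. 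The main technical hurdle throughout is keeping track of the Koszul signs; once the diagonalization in the first step is in hand, the remaining steps are straightforward but sign-sensitive calculations.
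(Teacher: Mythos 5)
Your proposal is correct and follows essentially the same route as the paper's proof: first diagonalize $f$ by commuting it with the orthogonal idempotents $\epsilon_\lambda$, then show the diagonal entries agree (up to the forced Koszul sign) by commuting with the elementary maps $\epsilon_{\mu\lambda}$ so that $f=\ell_b$, and finally obtain $d^B(b)=0$ from $[f,\partial]=0$ for a differential $\partial\in\Diff_B(N)$. Your sign bookkeeping is consistent (indeed somewhat more explicit than the paper's), so no changes are needed.
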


\begin{proof}
Let  $\mathcal B = \{ e_{\lambda}\}$  be a basis of  $N$ as a free $B$-module and $\{ \epsilon _{\lambda} \} \subseteq \E$ be the set of orthogonal idempotents corresponding to $\mathcal B$. Since  $ad (f) =0$ and each $\epsilon _{\lambda}$ is a graded map with $|\epsilon _{\lambda}|=0$, we have  $f  \epsilon_{\lambda} = \epsilon_{\lambda} f$  for each $\lambda$.
Hence,  $\epsilon_{\lambda}f \epsilon_{\mu} = f \epsilon_{\lambda}\epsilon_{\mu} = 0$ if $\lambda \not= \mu$.
It follows that  $f$   is a diagonal matrix with this basis, that is,  $f = \sum _{\lambda} \epsilon_{\lambda}b_{\lambda}$  for  some  $b_{\lambda} \in B$.
We define  $\epsilon _{\lambda \mu} \in \E$ by
$$
\epsilon _{\lambda \mu } (e_{\nu}) = \begin{cases} e _{\lambda} & (\nu = \mu) \\ 0 & (\nu \not= \mu).  \end{cases}
$$
So, $\epsilon _{\lambda} = \epsilon _{\lambda \lambda}$ and  $|\epsilon _{\lambda \mu }| = |e_{\lambda}|-|e_{\mu}|$.
We now have
\begin{eqnarray*}
0 &=& ad (f) ( \epsilon _{\lambda \mu })\\
&=& f  \epsilon _{\lambda \mu } -(-1)^{|f|(|e_{\lambda}|-|e_{\mu}|)} \epsilon _{\lambda \mu }f  \\
&=& (-1)^{|b_{\lambda}|(|e_{\lambda}|-|e_{\mu}|)} \epsilon _{\lambda \mu } b _{\lambda } -
(-1)^{|f|(|e_{\lambda}|-|e_{\mu}|)} \epsilon _{\lambda \mu } b_{\mu}.
\\
\end{eqnarray*}
Note that $|f|=|b_{\lambda}|$ for all $\lambda$. Therefore,  $b_{\lambda} = b_{\mu}$  for all $\lambda, \mu$ and hence,  $f$ is a scalar matrix, that is,
$f = \ell_b$  for an element $b \in B$.
Moreover,  the fact that $ad (f) (\partial)=0$ for all $\partial\in \Diff_B(N)$ implies that $\ell _{d^Bb} =[\ell _b, \partial] = 0$. Hence,  $d^Bb=0$.
\end{proof}

\begin{cor}\label{cor to kernel of ad}
Let  $f \in \E$, and assume that  $ad (f) =0$ as an element of  $\Der_A(\D)$.
If $|f| < 0$, then $f=0$.
\end{cor}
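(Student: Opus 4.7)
The plan is to reduce this immediately to Proposition~\ref{kernel of ad} and then exploit the non-negativity of the grading on $B$. Since $ad(f) = 0$ by assumption, Proposition~\ref{kernel of ad} already gives us $f = \ell_b$ for some cycle $b \in B$ with $d^B(b) = 0$. Under the identification $B \hookrightarrow \E$ sending $b \mapsto \ell_b$ discussed in~\ref{para20200314b}, the degrees are preserved, so $|f| = |b|$.

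Next, I would invoke the fact that $B$ is non-negatively graded: recall from~\ref{para20200329a} that $A = \bigoplus_{n \geq 0} A_n$, and from~\ref{simple extension} that $B = A\langle X \rangle$ is built from $A$ by adjoining a variable $X$ of degree $|X| > 0$ (together, in the even case, with its divided powers $X^{(m)}$, whose degrees $m|X|$ are also positive). Hence $B_n = 0$ for all $n < 0$. Applying this with $n = |f|$, the hypothesis $|f| < 0$ forces $b \in B_{|f|} = 0$, and therefore $f = \ell_b = 0$.

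There is really no obstacle here; the content of the corollary is entirely the combination of Proposition~\ref{kernel of ad} with the non-negativity of the grading on $B$, and the argument above is the whole proof. The only thing to keep straight is that the embedding $B \hookrightarrow \E$ is degree-preserving, so that the degree hypothesis on $f$ transfers directly to a degree statement about $b$.
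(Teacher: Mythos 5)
Your argument is exactly the paper's proof: apply Proposition~\ref{kernel of ad} to write $f=\ell_b$, note that the embedding $b\mapsto\ell_b$ preserves degree so $|b|=|f|<0$, and conclude $b=0$ from the non-negative grading of $B$. Correct and complete; nothing to add.
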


\begin{proof}
By virtue of Proposition~\ref{kernel of ad} we have  $f = \ell _{b}$ for some $b \in B$.
Since $|b| = |f| < 0$ and $B$  is non-negatively graded, we must have $b=0$.
\end{proof}

Next, we give a characterization of $j$-operators in $\Der _A (\D)$. The following is the main result of this section.
%In this theorem, we use the notation introduced in the proof of Proposition~\ref{kernel of ad}.

\begin{thm}\label{characterization}
Let  $\Delta \in \Der _A(\D)$ be homogeneous and assume that the following conditions hold:
\begin{enumerate}[\rm(1)]
\item  $\Delta (X) = 1$ if $|X|$ is odd, and  $\Delta (X^{(n)}) = X^{(n-1)}$ for $n >0$  if  $|X|$ is even.
\item  There is a set of orthogonal idempotents  $\{ \epsilon _{\lambda} \}\subseteq \E$  corresponding to a basis  $\mathcal B=\{ e_{\lambda}\}$ of $N$ as a free $B$-module such that  $\Delta (\epsilon _{\lambda})=0$  for all $\lambda$.
\end{enumerate}
Then  we have the equality  $\Delta = j _X ^{\mathcal B}$.
\end{thm}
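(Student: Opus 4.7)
The plan is to form the difference $\Delta' := \Delta - j_X^{\mathcal B}$ and show that $\Delta' = 0$. By Lemma~\ref{lem20200318s}(a),(c), $\Delta'$ is again a homogeneous element of $\Der_A(\D)$ of degree $-|X|$. A direct check from Definition~\ref{def j-op} and Lemma~\ref{j-operator properties}(b) shows $j_X^{\mathcal B}(X^{(n)}) = X^{(n-1)}$ for $n>0$ and $j_X^{\mathcal B}(\epsilon_\lambda) = 0$ (the matrix coefficients of $\epsilon_\lambda$ are $0$ or $1$, hence annihilated by $d/dX$), so hypotheses (1) and (2) yield $\Delta'(X^{(n)}) = 0$ and $\Delta'(\epsilon_\lambda) = 0$. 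Because every $b \in B$ is a finite $A$-linear combination of the $X^{(n)}$'s and $\Delta'$ vanishes on $A$, Leibniz gives $\Delta'(\ell_b) = 0$ for all $b \in B$.

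The first substantive step is to extend the vanishing to the off-diagonal ``matrix units'' $\epsilon_{\lambda\mu}$ defined in the proof of Proposition~\ref{kernel of ad}. From the identity $\epsilon_{\lambda\mu} = \epsilon_\lambda \epsilon_{\lambda\mu}\epsilon_\mu$ and Leibniz (together with $\Delta'(\epsilon_\lambda) = \Delta'(\epsilon_\mu) = 0$) I deduce
\[
\Delta'(\epsilon_{\lambda\mu}) = \epsilon_\lambda\,\Delta'(\epsilon_{\lambda\mu})\,\epsilon_\mu,
\]
so $\Delta'(\epsilon_{\lambda\mu})(e_\nu) = \delta_{\nu\mu}\, e_\lambda c$ for some $c \in B$. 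Comparing the degree of $\Delta'(\epsilon_{\lambda\mu})$ as an operator in $\E$, namely $|e_\lambda|-|e_\mu|-|X|$, with the degree $|e_\lambda|+|c|-|e_\mu|$ read off its action on $e_\mu$, I find $|c| = -|X| < 0$; since $B$ is non-negatively graded, $c = 0$, hence $\Delta'(\epsilon_{\lambda\mu}) = 0$. To extend to arbitrary $f \in \E$, localize at a basis element: writing $f(e_\nu) = \sum_\lambda e_\lambda b_{\lambda\nu}$ (a finite sum), the operator $f\epsilon_\nu$ is a finite sum of compositions of the form $\epsilon_{\lambda\nu}\ell_b$, each annihilated by $\Delta'$. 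Since Leibniz and $\Delta'(\epsilon_\nu) = 0$ give $\Delta'(f\epsilon_\nu) = \Delta'(f)\epsilon_\nu$, evaluating at $e_\nu$ yields $\Delta'(f)(e_\nu) = 0$ for every $\nu$, so $\Delta'$ vanishes on all of $\E$.

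The remaining task is to kill $\Delta'$ on a differential; I take $\partial := \partial^{\mathcal B}$, for which $j_X^{\mathcal B}(\partial^{\mathcal B}) = 0$. Then $g := \Delta'(\partial^{\mathcal B}) \in \E$ has degree $-1-|X| < 0$, and the goal is to force $g = 0$ via Corollary~\ref{cor to kernel of ad} by checking $ad(g) = 0$ on $\D$. For any $f \in \E$, apply $\Delta'$ to $[\partial^{\mathcal B}, f] \in \E$ (see~\ref{Jacobi}): the left side is zero by the previous paragraph, while Leibniz-expanding the right (using $\Delta'(f) = 0$) yields, after sign bookkeeping, exactly $[g, f]$, so $[g, f] = 0$. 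Likewise, applying $\Delta'$ to $(\partial^{\mathcal B})^2 = 0$ and collecting terms produces $[g, \partial^{\mathcal B}] = 0$. The graded-derivation identity $[g, \alpha\beta] = [g, \alpha]\beta + (-1)^{|g||\alpha|}\alpha[g, \beta]$ then propagates the vanishing of $[g,-]$ to all of $\D = \E + \E\circ\partial^{\mathcal B}$, so Corollary~\ref{cor to kernel of ad} forces $g = 0$. A final Leibniz expansion on $\D = \E + \E\circ\partial^{\mathcal B}$ concludes $\Delta' = 0$, as required. The main obstacle is the matrix-unit step, where the degree mismatch between $|\Delta'(\epsilon_{\lambda\mu})|$ and the non-negative grading of $B$ is the essential ingredient forcing $c = 0$; the rest is organized sign-tracking and repeated invocation of the Leibniz rule.
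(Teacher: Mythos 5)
Your argument is correct and follows essentially the same route as the paper's: reduce to the matrix units $\epsilon_{\lambda\mu}$ and to scalars $\ell_b$ via the Leibniz rule, kill the resulting coefficients using that they have negative degree while $B$ is non-negatively graded, and finish on $\D = \E + \E\circ\partial^{\mathcal B}$. The only difference is cosmetic: for $g = \Delta'(\partial^{\mathcal B})$ the paper shows directly that this element of $\E$ is diagonal with entries of degree $-|X|-1<0$, whereas you take a slightly longer detour through $ad(g)=0$ and Corollary~\ref{cor to kernel of ad}; both hinge on the same degree count.
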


Note that the $j$-operator $j _X ^{\mathcal B}$ satisfies conditions (1) and (2).
Hence, (1) and (2) together create a necessary and sufficient set of conditions for $\Delta$ to be a $j$-operator.

\begin{proof}
By (1) we have  $|\Delta | = -|X|$.
Define  $\{\epsilon _{\lambda \mu}\}\subseteq \E$ as in the proof of Proposition~\ref{kernel of ad} and note that $\epsilon _{\lambda} = \epsilon _{\lambda \lambda}$.

Claim 1: $\Delta (\epsilon _{\lambda \mu} ) =0$  for all $\lambda, \mu$.

To prove this, recall from the definition of $\Delta$ that $\Delta (\epsilon _{\lambda \mu} )  \in \E$. We also have
$\epsilon _{i}  \Delta (\epsilon _{\lambda \mu} ) \epsilon _{j}
= \Delta ( \epsilon _{i}  \epsilon _{\lambda \mu} \epsilon _{j} ) =0$ if   $i \not= \lambda$ or $j \not= \mu$.
Hence, $\Delta (\epsilon _{\lambda \mu} ) = \epsilon _{\lambda \mu} b$  for some $b \in B$.
Notice that  $|b| = |\Delta | = -|X| < 0$, and since  $B$ is non-negatively graded, we have  $b =0$.

Claim 2: $\Delta (f)=j _X^{\mathcal B}(f) $  for all $f \in \E$.

For simplicity write $j=j _X^{\mathcal B}$. Note that both sides of the equality belong to $\E$. Hence, to prove the equality, it is enough to show
$\epsilon _{\lambda}\Delta (f) \epsilon _{\mu} =\epsilon _{\lambda} j(f) \epsilon _{\mu}$ for all $\lambda, \mu$, that is,
$\Delta ( \epsilon _{\lambda}f \epsilon _{\mu}) =j (\epsilon _{\lambda} f \epsilon _{\mu})$ for all $\lambda, \mu$.
So, it is enough to prove the equality when  $f = \epsilon _{\lambda \mu} b$ for $b \in B$.
Since by Claim 1, $\Delta (\epsilon _{\lambda \mu} ) = j(\epsilon _{\lambda \mu} )=0$, it is enough to show
$\Delta (b) = j(b)$  for $b \in B$.

In case where $|X|$ is odd, let $b =a_0 + Xa_1$ with $a_i \in A$. Since  $\Delta (a_i)=0$ and $\Delta (X)=1$, we have $\Delta (b) = a_1 = j (b)$.

In case where $|X|$ is even, let $b = \sum_{i=0}^n X^{(i)} a_i\in B$ for $a_i\in A$. Since  $\Delta (X^{(n)}) = X^{(n-1)}$ we have
$\Delta (b) = \sum_{i=1}^n X^{(i-1)} a_i = j(b)$.

Claim 3: $\Delta (\partial ^{\mathcal B}) =0\  ( = j( \partial ^{\mathcal B}))$.

Set $\partial = \partial ^{\mathcal B}$ and note that if $\lambda \not= \mu$, then
$$
(\epsilon _{\lambda} \partial  \epsilon _{\mu}) \left(\sum _\nu e_{\nu} b_{\nu}\right)  = \epsilon _{\lambda}\partial (e_{\mu} b_{\mu}) = \epsilon _{\lambda} (e_{\mu} d^Bb_{\mu} )=\epsilon _{\lambda} (e_{\mu}) d^Bb_{\mu}=0.
$$
Hence,  $\epsilon _{\lambda} \Delta( \partial)  \epsilon _{\mu}=\Delta ( \epsilon _{\lambda} \partial  \epsilon _{\mu})=0$ if $\lambda \not= \mu$.
Since $ \Delta( \partial)  \in \E$, we have  $ \Delta( \partial) = \sum _{\lambda} \epsilon _{\lambda}b_{\lambda}$, which means that $ \Delta( \partial)$ is a diagonal matrix.
Since  $|\Delta (\partial) | = |\Delta | -1 = -|X|-1$ is negative and $| \epsilon _{\lambda}|=0$, it follows that $|b _{\lambda}|< 0$ for all $\lambda$. Thus, for all $\lambda$ we have $b_{\lambda} =0$ and therefore, $\Delta( \partial) =0$.

Since by~\ref{prop20200315a} we have $\D = \E + \E \circ \partial$,
it follows from Claims 2 and 3 and Corollary~\ref{cor20200316a} that   $\Delta  = j$  as mappings  from $\D$ to itself.
\end{proof}

Theorem~\ref{characterization} enables us to find the conditions that are needed for an element $\Delta \in \Der _A(\D)$ to be a $j$-operator in the specific cases of $|X|$ being even or odd. First we treat the case where $|X|$ is even.

\begin{prop}\label{even}
Suppose that $|X|$ is even. Let  $\Delta \in \Der _A(\D)$ be homogeneous, and assume that the following conditions hold:
\begin{enumerate}[\rm(1)]
\item  $N$ is bounded below.
\item  $\Delta (X^{(n)}) = X^{(n-1)}$ for $n >0$.
\end{enumerate}
Then $\Delta |_{\E}\colon \E \to \E$ is surjective and there is a basis $\mathcal B= \{ e_{\lambda}\}$ of  $N$ as a graded $B$-module such that  $\Delta = j_X ^{\mathcal B}$.
\end{prop}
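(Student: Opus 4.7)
To apply Theorem~\ref{characterization}, it suffices to produce a basis of $N$ whose orthogonal idempotents are killed by $\Delta$: hypothesis~(2) is exactly condition~(1) of that theorem, and with $n=1$ it forces $|\Delta|=-|X|$. The surjectivity assertion and the basis construction will both be driven by the same truncation-plus-Leibniz trick, fueled by the bounded-below hypothesis.

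\textbf{Surjectivity.} For homogeneous $f\in\E$, set
\[
g=\sum_{n\ge 1}(-1)^{n-1}X^{(n)}\Delta^{n-1}(f).
\]
Since $\Delta^{n-1}(f)\in\E$ has degree $|f|-(n-1)|X|$ and $N$ is bounded below, on each basis element $e_\lambda^0$ of an initial basis $\mathcal{B}_0$ only finitely many summands contribute, so $g$ is a genuine element of $\E$. Letting $g_N$ denote the partial sum up to $n=N+1$, applying the Leibniz rule gives the telescoping identity $\Delta(g_N)=f+(-1)^N X^{(N+1)}\Delta^{N+1}(f)$. For $N$ large (depending on $\lambda$) the map $g-g_N$ annihilates $e_\lambda^0 B$, and applying $\Delta$ to $(g-g_N)\epsilon_\lambda^0=0$ via Leibniz produces $\Delta(g-g_N)\epsilon_\lambda^0=-(g-g_N)\Delta(\epsilon_\lambda^0)$; one more round of the boundedness argument, noting that $\Delta(\epsilon_\lambda^0)$ sends $e_\lambda^0$ to a finite sum of basis elements, makes the right-hand side vanish. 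Combining with the telescoping formula yields $\Delta(g)(e_\lambda^0)=f(e_\lambda^0)$ for every $\lambda$, so $\Delta(g)=f$.

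\textbf{Basis.} Let $\{\epsilon_\lambda^0\}$ be the orthogonal idempotents of $\mathcal{B}_0$ and write $D_\lambda:=\Delta(\epsilon_\lambda^0)\in\E$. Applying $\Delta$ to $\epsilon_\lambda^2=\epsilon_\lambda$ and $\epsilon_\lambda\epsilon_\mu=0$ ($\lambda\ne\mu$) shows that the components $(D_\lambda)_{\mu\nu}:=\epsilon_\mu D_\lambda\epsilon_\nu$ vanish unless exactly one of $\mu,\nu$ equals $\lambda$, and that $(D_\lambda)_{\lambda\mu}+(D_\mu)_{\lambda\mu}=0$ for $\lambda\ne\mu$. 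Define $\Psi_0\in\E$ of degree $-|X|$ by $(\Psi_0)_{\alpha\beta}:=(D_\beta)_{\alpha\beta}$ for $\alpha\ne\beta$ and $(\Psi_0)_{\alpha\alpha}:=0$; a direct matrix check then gives $[\Psi_0,\epsilon_\lambda^0]=D_\lambda$ for every $\lambda$. Next I would construct a $B$-linear automorphism $\phi$ of $N$ of degree $0$ with $\phi^{-1}\Delta(\phi)=-\Psi_0$, using the ansatz $\phi=\sum_{n\ge 0}X^{(n)}\psi_n$ with $\psi_0=1$. Matching coefficients of $X^{(m)}$ forces $\psi_1=-\Psi_0$ and the recursion $\psi_{n+1}=-\psi_n\Psi_0-\Delta(\psi_n)$; each $\psi_n\in\E$ has degree $-n|X|$, so bounded below makes $\phi$ act by finite sums on every basis element and makes $\phi-1$ locally nilpotent, so $\phi$ is an invertible element of $\E$ of degree $0$. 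The truncated identity $\Delta(\phi_N)=-\phi_N\Psi_0-X^{(N)}\psi_{N+1}$ upgrades to the honest equality $\Delta(\phi)=-\phi\Psi_0$ by the same truncation argument as in the surjectivity step. Setting $\mathcal{B}:=\{\phi(e_\lambda^0)\}$, the conjugated idempotents $\epsilon_\lambda:=\phi\epsilon_\lambda^0\phi^{-1}$ satisfy
\[
\phi^{-1}\Delta(\epsilon_\lambda)\phi=\Delta(\epsilon_\lambda^0)+[\phi^{-1}\Delta(\phi),\epsilon_\lambda^0]=D_\lambda-[\Psi_0,\epsilon_\lambda^0]=0,
\]
so $\Delta(\epsilon_\lambda)=0$ and Theorem~\ref{characterization} gives $\Delta=j_X^{\mathcal{B}}$.

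The main obstacle is to turn the formal infinite series defining $g$ and $\phi$ into genuine elements of $\E$ that satisfy the expected Leibniz-type identities; this is precisely where the bounded-below hypothesis is used, through the truncation-plus-Leibniz argument sketched above. Once that is in place the remaining ingredients are the matrix bookkeeping with idempotents from Proposition~\ref{kernel of ad}'s proof style and the standard conjugation computation.
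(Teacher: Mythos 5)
Your proof is correct, but its second half takes a genuinely different route from the paper's. For surjectivity you construct literally the same preimage as the paper: the paper first splits $f=f_0+f_+$ with $\Delta(f_0)=0$ and then writes $F=Xf_0+X^{(2)}\Delta(f)-2X^{(3)}\Delta^2(f)+\cdots$, and expanding $Xf_0$ via $XX^{(n)}=(n+1)X^{(n+1)}$ shows that $F$ equals your $g=\sum_{n\ge1}(-1)^{n-1}X^{(n)}\Delta^{n-1}(f)$; your truncation-plus-Leibniz justification of differentiating the infinite series term by term is more careful than the paper's ``one can verify.'' For the basis, the paper instead uses the surjectivity computation to establish the decomposition $\E=\E_0\oplus\E_+$, where $\E_0=\Ker\Delta\cap\E$ is a subring and $\E_+=\sum_{i>0}X^{(i)}\E$ is an ideal with $\E_0\circ\E_+=\E_+\circ\E_0=\E_+\supseteq\E_+\circ\E_+$; the projection $\E\to\E_0$ is then multiplicative, so it carries the orthogonal idempotents $\epsilon_\lambda$ to orthogonal idempotents $(\epsilon_\lambda)_0$ killed by $\Delta$, and a Nakayama-type argument (using $(\epsilon_\lambda)_0\equiv\epsilon_\lambda\pmod{\E_+}$) shows that $\{(\epsilon_\lambda)_0(e_\lambda)\}$ is again a basis. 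You instead build an explicit degree-zero automorphism $\phi=\sum_n X^{(n)}\psi_n$ conjugating the old idempotents into $\Ker\Delta$, which requires the $\Psi_0$ bookkeeping and the recursive solution of $\Delta(\phi)=-\phi\Psi_0$. Both arguments are sound: the paper's ring-theoretic decomposition is shorter and makes preservation of idempotency and orthogonality automatic, whereas your conjugation argument is more constructive, exhibits the change of basis as an explicit automorphism (so the new basis is visibly $\phi(\mathcal{B}_0)$ and is a basis for free), and sidesteps the separate verification that the modified generating set is linearly independent.
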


\begin{proof}
For an arbitrary element $f \in \E$ let $$f_+ = X \Delta (f) - X ^{(2)} \Delta ^2 (f) + \cdots + (-1)^{n+1} X^{(n)}\Delta ^n (f) + \cdots.$$
Since  $N$ is bounded below, $f_+$ is well-defined as a mapping $N \to N$.
One can verify that $\Delta (f _+) = \Delta (f)$ and setting   $f_0 = f -f_+$, we have  $\Delta (f_0) =0$. Set $$F = X f_0 + X^{(2)}  \Delta (f) - 2 X^{(3)} \Delta ^2(f) + \cdots + (-1)^{n+1} n X^{(n+1)} \Delta ^n (f) + \cdots.$$
Note that $F \in \E$ as well and  $\Delta (F) =f$. Hence, $\Delta |_{\E}\colon \E \to \E$ is a surjective map.

Now we prove that $\Delta$ is equal to the $j$-operator with respect to a basis for $N$.
By Theorem~\ref{characterization} it suffices to show that there is a basis $\mathcal B$ and a set of orthogonal idempotents $\{ \epsilon _{\lambda}\}\subseteq \E$ corresponding to $\mathcal B$  such that  $\Delta ( \epsilon _{\lambda}) =0$ for all $\lambda$.

Set $\E _0 := \Ker \Delta \cap \E = \{ f \in \E \ | \ \Delta (f) =0 \}$ and $\E_+ := \sum _{i >0} X^{(i)} \E$.
By the above discussion we have $\E = \E_0 + \E_+$.
Note that  $\E_0 \subseteq \E$  is a subring, while  $\E_+ \subseteq \E$  is an ideal.
We claim the following
\begin{equation}\label{eq20200318a}
\E = \E_0 \oplus \E_+\quad\quad\quad \text{and} \quad\quad\quad \E_0\circ \E_+ = \E_+\circ \E_0 = \E_+ \supseteq  \E_+\circ \E_+.
\end{equation}
To prove this, it is enough to show $\E_0 \cap  \E_+ = \{ 0\}$. (The rest is trivial.)
Assume that $f \in \E_0 \cap  \E_+$ and write
$f= \Sigma_{i\geq 1}X^{(i)}f_i$. The fact that  $\Delta (f)=0$ implies that  $f_1+\Sigma_{i\geq 1} X^{(i)}(\Delta(f_i) +f_{i+1})=0$. Hence,
$f_1 \in  \sum _{i \geq 1} X^{(i)} \E$.
This shows that $f \in  \sum _{i \geq 2} X^{(i)} \E$.
By induction on $n$, we see that  $f \in  \sum _{i \geq n} X^{(i)} \E$.
Since $N$ is bounded below, it follows that $f(x) =0$  for all $x \in N$.
This finishes the proof of the equality $\E = \E_0 \oplus \E_+$.

Now let  $\{ e_{\lambda}\}$ be a basis of $N$ as a free $B$-module, and let  $\{ \epsilon _{\lambda}\}\subseteq \E$  be the corresponding orthogonal idempotents.
By~\eqref{eq20200318a} an element $\epsilon _{\lambda} \in \E$ decomposes as
$\epsilon _{\lambda} = (\epsilon _{\lambda} )_0 + (\epsilon _{\lambda} )_+ \in \E_0 \oplus \E_+$.
Since  $\epsilon _{\lambda} ^2=\epsilon _{\lambda}$, by~\eqref{eq20200318a} we have
$(\epsilon _{\lambda} )_0 ^2=(\epsilon _{\lambda} )_0$, and since $(\epsilon _{\lambda} )(\epsilon _{\mu} )=0$ for $\lambda \not= \mu$, we obtain the equality $(\epsilon _{\lambda} )_0 (\epsilon _{\mu} )_0 =0$.
Therefore, $\{ (\epsilon_{\lambda})_0 \}$  is a set of orthogonal idempotents.

Now set  $(e_{\lambda})_0 :=  (\epsilon_{\lambda})_0(e_{\lambda})$.
Since  $ (\epsilon_{\lambda})_0 \equiv  \epsilon_{\lambda} \pmod {\E_+}$, we see that
$(e_{\lambda})_0 \equiv e_{\lambda} \pmod {\sum _{i >0} X^{(i)} N}$.
Hence, similar to Nakayama's Lemma, we see that $\{ (e_{\lambda})_0\}$  is a generating set of $N$.
It is straightforward to check that $\{ (e_{\lambda})_0\}$ is  linearly independent over  $B$, so it is a basis of $N$.
Note that $\Delta((\epsilon _{\lambda} )_0)=0$,
$(\epsilon _{\lambda} )_0((e_{\lambda})_0 )= (e_{\lambda})_0$, and $(\epsilon _{\lambda} )_0((e_{\mu})_0 )= 0$ if $\lambda\not= \mu$.
Therefore, $\mathcal B=\{ (e_{\lambda})_0\}$ is the basis for $N$ as a free $B$-module with the orthogonal idempotents $\{ (\epsilon _{\lambda})_{0}\}\subseteq \E$ corresponding to $\mathcal B$ with the desired property and this finishes the proof.
\end{proof}

Next, we give an odd version of Proposition~\ref{even}.

\begin{prop}\label{odd}
Suppose that $|X|$ is odd.
Let $\Delta \in \Der _A(\D)$ be homogeneous, and assume that the following conditions hold:
\begin{enumerate}[\rm(1)]
\item  $\Delta (X) = 1$.
\item $\Delta ^2 =0$
\end{enumerate}
Then there is a basis $\mathcal B= \{ e_{\lambda}\}$ of  $N$ as a free $B$-module such that  $\Delta = j_X ^{\mathcal B}$.
\end{prop}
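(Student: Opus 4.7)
The plan is to verify the hypotheses of Theorem~\ref{characterization}. Condition~(1) there is exactly the hypothesis $\Delta(X)=1$, so $|\Delta|=-|X|$, and it remains only to produce a basis $\mathcal{B}=\{e_{\lambda}\}$ of $N$ and the associated orthogonal idempotents $\{\epsilon_{\lambda}\}\subseteq \E$ such that $\Delta(\epsilon_{\lambda})=0$ for every $\lambda$. Starting from an arbitrary basis and its idempotents, we will correct each $\epsilon_{\lambda}$ by a summand in $X\E$ to land in $\E_{0}:=\ker\Delta\cap\E$ while preserving orthogonality and idempotency; this is the odd analogue of the construction in the proof of Proposition~\ref{even}.

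The algebraic heart of the argument is the Peirce-type decomposition $\E=\E_{0}\oplus X\E$. Using the Leibniz rule together with $\Delta(X)=1$ and the fact that $|X|$ is odd (so $(-1)^{|\Delta||X|}=-1$), one computes $\Delta(Xf)=f-X\Delta(f)$ for every $f\in\E$. Combined with $\Delta^{2}=0$, this gives $\Delta(f-X\Delta(f))=X\Delta^{2}(f)=0$, so the assignment $f\mapsto f-X\Delta(f)$ projects $\E$ onto $\E_{0}$, and writing $f=(f-X\Delta(f))+X\Delta(f)$ yields $\E=\E_{0}+X\E$. The sum is direct: if $Xg\in\E_{0}$, then $g=X\Delta(g)$ by the formula above, and multiplying on the left by $X$ gives $Xg=X^{2}\Delta(g)=0$, since $X^{2}=0$ in $B$. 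Furthermore $\E_{0}$ is an $R$-subalgebra (the kernel of a graded derivation on $\E$) and $X\E\subseteq\E$ is a two-sided graded ideal with $(X\E)^{2}=X^{2}\E=0$. This is where the hypothesis $\Delta^{2}=0$ does all the work: it replaces the ``infinite series'' argument used in Proposition~\ref{even} (which relied on $N$ being bounded below) with a one-step truncation made possible by $X^{2}=0$.

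With this decomposition in hand, pick any basis $\{e_{\lambda}\}$ of $N$ and its orthogonal idempotents $\{\epsilon_{\lambda}\}$, and write $\epsilon_{\lambda}=(\epsilon_{\lambda})_{0}+Xf_{\lambda}$ with $(\epsilon_{\lambda})_{0}\in\E_{0}$ and $f_{\lambda}\in\E$. Expanding $\epsilon_{\lambda}\epsilon_{\mu}=\delta_{\lambda\mu}\epsilon_{\lambda}$ and using $X^{2}=0$ together with the direct-sum decomposition, the $\E_{0}$-components give $(\epsilon_{\lambda})_{0}(\epsilon_{\mu})_{0}=\delta_{\lambda\mu}(\epsilon_{\lambda})_{0}$, so $\{(\epsilon_{\lambda})_{0}\}$ is again a system of orthogonal idempotents, now lying in $\ker\Delta$. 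Set $(e_{\lambda})_{0}:=(\epsilon_{\lambda})_{0}(e_{\lambda})=e_{\lambda}-Xf_{\lambda}(e_{\lambda})$. The $B$-linear change-of-basis map $\phi\colon N\to N$ defined by $\phi(e_{\lambda})=(e_{\lambda})_{0}$ has the form $\phi=\mathrm{id}-M$ with $M(N)\subseteq XN$, so $M^{2}(N)\subseteq X\cdot XN=0$; hence $\phi$ is invertible with inverse $\mathrm{id}+M$ and $\mathcal{B}=\{(e_{\lambda})_{0}\}$ is a basis of $N$. A direct computation $(\epsilon_{\lambda})_{0}((e_{\mu})_{0})=(\epsilon_{\lambda})_{0}(\epsilon_{\mu})_{0}(e_{\mu})=\delta_{\lambda\mu}(e_{\lambda})_{0}$ confirms that $\{(\epsilon_{\lambda})_{0}\}$ is precisely the orthogonal idempotent system of $\mathcal{B}$.

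The main obstacle, as in the even case, is ensuring that the decomposition $\E=\E_{0}\oplus X\E$ is actually direct and that the ``corrected'' idempotents $(\epsilon_{\lambda})_{0}$ correspond to an honest basis of $N$; here $X^{2}=0$ replaces the Nakayama-style convergence argument of Proposition~\ref{even}, and $\Delta^{2}=0$ is precisely the input needed to split off the kernel. Once the pair $(\mathcal{B},\{(\epsilon_{\lambda})_{0}\})$ is produced, Theorem~\ref{characterization} applies and delivers $\Delta=j_{X}^{\mathcal{B}}$.
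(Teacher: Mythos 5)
Your proof is correct and follows essentially the same route as the paper: both use $\Delta(X)=1$ and $\Delta^2=0$ to produce the splitting $\E=\E_0\oplus X\E$ (the paper writes the complement as $X\E_0$, which coincides with $X\E$ since $X^2=0$), then project the idempotents of an arbitrary basis into $\E_0$ and invoke Theorem~\ref{characterization}. If anything, you supply more detail than the paper does on the passage from the corrected idempotents $(\epsilon_\lambda)_0$ to an honest new basis of $N$.
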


\begin{proof}
By virtue of Theorem \ref{characterization}, it suffices to show that there is a basis $\{ e_{\lambda}\}$ of $N$ such that  $\Delta (\epsilon _{\lambda})=0$ for its associated orthogonal idempotents $\epsilon_{\lambda}$.
Note that for  $f \in \E$  we have the equality
$f = \Delta (Xf) + X\Delta (f)$.
Setting   $\E_0 = \Ker \Delta \cap \E$ and  $\E_1 = X \E_0$ we see that
$\Delta (Xf) \in \E_0$ and  $X\Delta (f) \in \E_1$.
It is straightforward to check that  $\E_0 \cap \E_1 =\{ 0\}$.
Hence,
$\E = \E_0 \oplus \E_1$, $\E_0\circ \E_0 \subseteq \E_0$, $\E_0\circ \E_1 \subseteq \E_1$, and $\E_1\circ \E_1 =\{ 0\}$.
(Note that $\E$ is a $\mathbb{Z}/2\mathbb{Z}$-graded ring.)
Let  $\mathcal B=\{ e_{\lambda}\}$  be any basis of $N$ as a free $B$-module, and let  $\{ \epsilon _{\lambda}\}\subseteq \E$ be the corresponding orthogonal idempotents.
Taking the degree $0$ part of $\epsilon _{\lambda}$,
we have a set of orthogonal idempotents  $\{ (\epsilon _{\lambda})_0 \}\subseteq \E$ corresponding to $\mathcal B$ that satisfies the desired property.
\end{proof}

The following is another application of Theorem~\ref{characterization}.

\begin{prop}\label{automorphism}
Let  $u \in \E$ be a unit, i.e., $u\colon N \to N$ is an automorphism, and consider the graded bases  $\mathcal B = \{ e_{\lambda} \}$ and $\mathcal B'=\{e'_{\lambda}\}$ of $N$ as a free $B$-module, where $e'_{\lambda} = u (e_{\lambda})$.
Then, we have the equality $j_X^{\mathcal B} - j_X^{\mathcal B'} = ad (\alpha )$, where $\alpha = j_X^{\mathcal B} (u) u ^{-1} \in \E$.

%Furthermore, $j_X^{\mathcal B} = j_X^{\mathcal B'}$  if and only if $\sum_{\mu} e_{\mu}A \cong \sum_{\mu} e'_{\mu} A$ as graded $A$-modules.

Moreover, $j_X^{\mathcal B} = j_X^{\mathcal B'}$  if and only if
there is an isomorphism $v\colon  \sum_{\lambda} e_{\lambda}A \to \sum_{\lambda} e'_{\lambda} A$ of graded $A$-modules such that $u = v \otimes _A B$.
\end{prop}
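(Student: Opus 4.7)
The plan is to give an explicit formula for $j_X^{\mathcal{B}'}$ as a conjugate of $j_X^{\mathcal{B}}$ by $u$ (using Theorem~\ref{characterization}), and then expand via the Leibniz rule (Theorem~\ref{LR for D}) to extract the term $ad(\alpha)$. To this end, introduce the candidate map $\Phi \colon \D \to \D$ defined by $\Phi(\varphi) = u \circ j_X^{\mathcal{B}}(u^{-1}\varphi u) \circ u^{-1}$, and verify first that $\Phi \in \Der_A(\D)$: the Leibniz rule transfers from $j_X^{\mathcal{B}}$ through the conjugation, and $\Phi(a)=0$ for $a \in A$ follows from $u^{-1}\ell_a u = \ell_a$ (since $u$ is $B$-linear) together with Lemma~\ref{j-operator properties}(b). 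Next, check that $\Phi$ satisfies both hypotheses of Theorem~\ref{characterization} with respect to $\mathcal{B}'$: condition (1) follows from $u\ell_{X^{(n)}}u^{-1} = \ell_{X^{(n)}}$ (since elements of $B$ commute with every $B$-linear map, cf.~\ref{para20200314b}) combined with $j_X^{\mathcal{B}}(X^{(n)}) = X^{(n-1)}$; for condition (2), the orthogonal idempotents of $\mathcal{B}'$ are $\epsilon'_\lambda = u\epsilon_\lambda u^{-1}$, so $\Phi(\epsilon'_\lambda) = u\, j_X^{\mathcal{B}}(\epsilon_\lambda)\, u^{-1} = 0$ (by Claim~1 in the proof of Theorem~\ref{characterization}). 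Theorem~\ref{characterization} then yields $\Phi = j_X^{\mathcal{B}'}$.

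With the formula $j_X^{\mathcal{B}'}(\varphi) = u\, j_X^{\mathcal{B}}(u^{-1}\varphi u)\, u^{-1}$ established, expand the right-hand side by applying Theorem~\ref{LR for D} twice to the inner triple product (using $|u|=0$), together with the identity $u\, j_X^{\mathcal{B}}(u^{-1}) = -j_X^{\mathcal{B}}(u)\,u^{-1}$ derived from $j_X^{\mathcal{B}}(uu^{-1})=0$. Collecting terms and using $|\alpha|=-|X|$ (so $(-1)^{|\alpha||\varphi|} = (-1)^{|X||\varphi|}$) gives
\begin{equation*}
j_X^{\mathcal{B}'}(\varphi) = j_X^{\mathcal{B}}(\varphi) - \alpha\varphi + (-1)^{|X||\varphi|}\varphi\alpha = j_X^{\mathcal{B}}(\varphi) - ad(\alpha)(\varphi),
\end{equation*}
with $\alpha = j_X^{\mathcal{B}}(u)\, u^{-1}$, establishing $j_X^{\mathcal{B}} - j_X^{\mathcal{B}'} = ad(\alpha)$.

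For the moreover part, $j_X^{\mathcal{B}} = j_X^{\mathcal{B}'}$ precisely when $ad(\alpha)=0$. Since $|\alpha|=-|X|<0$, Corollary~\ref{cor to kernel of ad} forces $\alpha=0$, equivalently $j_X^{\mathcal{B}}(u)=0$. By Definition~\ref{def j-op} and the last assertion of Lemma~\ref{d/dX}, this amounts to every matrix entry $u_{\mu\lambda}$ of $u$ in the basis $\mathcal{B}$ lying in $A$; the same applied to $u^{-1}$ shows its matrix is $A$-valued as well, so $u$ restricts to a graded $A$-module isomorphism $v\colon \sum_\lambda e_\lambda A \to \sum_\lambda e'_\lambda A$ with $u = v\otimes_A B$ under the natural identifications, and the converse follows by reversing this chain. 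The main obstacle I anticipate is in the first step---identifying the orthogonal idempotents of $\mathcal{B}'$ as conjugates $u\epsilon_\lambda u^{-1}$ and discharging the Theorem~\ref{characterization} hypotheses with correct signs---together with the sign-tracking in the Leibniz expansion of the second step.
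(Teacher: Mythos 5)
Your proof is correct, and its overall architecture coincides with the paper's: both rest on the conjugation identity $j_X^{\mathcal B'}(\varphi) = u\, j_X^{\mathcal B}(u^{-1}\varphi u)\, u^{-1}$, expand it by the Leibniz rule of Theorem~\ref{LR for D} together with $u\,j_X^{\mathcal B}(u^{-1}) = -j_X^{\mathcal B}(u)u^{-1}$, use $|\alpha|=-|X|$ to recognize $ad(\alpha)$, and settle the ``moreover'' part via Corollary~\ref{cor to kernel of ad}. The one genuine divergence is how the conjugation identity is justified: the paper gets it in one line by observing that the matrix of $u^{-1}fu$ in the basis $\mathcal B$ is exactly the matrix of $f$ in the basis $\mathcal B'$, so the defining formula~\eqref{def j-op} transports directly; you instead verify that $\Phi(\varphi)=u\,j_X^{\mathcal B}(u^{-1}\varphi u)\,u^{-1}$ lies in $\Der_A(\D)$, satisfies conditions (1)--(2) of Theorem~\ref{characterization} for the idempotents $\epsilon'_\lambda=u\epsilon_\lambda u^{-1}$, and hence equals $j_X^{\mathcal B'}$. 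Your route is heavier but has the merit of not requiring one to check that the matrix formula for $j$ extends consistently from $\E\cup\Diff_B(N)$ to all of $\D$, which the paper's one-line argument implicitly uses; conversely, the paper's argument is more elementary and does not invoke the characterization theorem. Both you and the paper implicitly take $|u|=0$ (you at least flag this), and your reading of the ``moreover'' clause as ``the matrix of $u$ with respect to $\mathcal B$ has entries in $A$'' agrees with the paper's.
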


\begin{proof}
For $f \in \D$,  if we set $f( e_{\lambda} ) = \sum _{\mu} e_{\mu} F _{\mu \lambda}$ and $f( e'_{\lambda} ) = \sum _{\mu} e'_{\mu} F' _{\mu \lambda}$,
where  $F _{\mu \lambda}, F' _{\mu \lambda} \in B$,  then the matrix representation of  $u ^{-1} f u$ with respect to the basis $\mathcal B$  is $[F' _{\mu \lambda}]$.
Hence, we have $j_X^\mathcal B ( u ^{-1} f u ) = u ^{-1} j_X^{\mathcal B'} (f) u$.
Therefore,
$$
u ^{-1}j_X ^{\mathcal B'} (f) u = j_X^\mathcal B ( u ^{-1})f u + u ^{-1} j_X^\mathcal B ( f ) u  + (-1) ^{|X||f|} u ^{-1} f j_X^\mathcal B ( u ).
$$
Since  $ j_X^\mathcal B ( u )= \alpha u$, from $u u^{-1} =1$ we see that  $ j_X^\mathcal B ( u ^{-1}) =-u ^{-1} \alpha$.
Thus,
$$
u ^{-1}j_X ^{\mathcal B'} (f) u = u ^{-1} (-\alpha f  + j_X^\mathcal B ( f )  + (-1) ^{|X||f|} f \alpha)  u.
$$
Hence, we have
$$
j_X ^{\mathcal B'} (f) =-\alpha f  + j_X^\mathcal B ( f )  + (-1) ^{|X||f|} f \alpha.
$$
Now the equality $j_X^{\mathcal B} - j_X^{\mathcal B'} = ad (\alpha )$ follows from the fact that  $|\alpha | = -|X|$.

Finally, note that if  $j_X^{\mathcal B} = j_X^{\mathcal B'}$, then $\alpha =0$ by Corollary~\ref{cor to kernel of ad}. This means that  $j_X^{\mathcal B} (u) =0$ and hence, $u$ is a matrix whose components are in $A$.
\end{proof}

\section{Weak liftability of DG modules}\label{sec20200314d}

As consequences of Theorem~\ref{characterization}, in this section, we record some lifting and weak lifting results for DG modules along simple extensions of DG algebras. For more results on the notions of liftings and weak liftings and an extensive background on these subjects see also~\cite{auslander:lawlom, nasseh:lql, nassehyoshino, OY, yoshino}.

The notation used in this section comes from the previous sections.

\begin{defn}\label{weak lifting defn}
A right DG $B$-module $N$ is \emph{liftable} to $A$ if there is a right DG $A$-module $M$ such that $N \cong M\lotimes_A B$ (or $N\cong M\otimes_A B$ if $M$ and $N$ are semifree)
in the derived category $\D(B)$. In such a case we say that $M$ is a lifting of $N$ to $A$. A DG $B$-module $L$ is \emph{weakly liftable} to $A$ if there are positive integers $a_1,\ldots,a_r$ such that the finite direct sum $N \oplus N(-a_1) \oplus \cdots \oplus N(-a_r)$ is liftable to $A$.
\end{defn}

\begin{lem}\label{lifting}
Let  $(N, \partial)$ be a DG $B$-module and $\mathcal B = \{ e_{\lambda}\}$  be a basis of $N$ as a free $B$-module.
\begin{enumerate}[\rm(a)]
\item
If  $j_X^{\mathcal B}(\partial)=0$, then $(N, \partial)$ has a lifting  $M=\sum _{\lambda} e_{\lambda} A$ to  $A$.
\item
Conversely, if  $(N, \partial)$  has a lifting $M=\sum _{\lambda} e_{\lambda} A$ to $A$, then there is a graded $B$-module automorphism $u\in \E$  such that $j_X^{\mathcal B'}(\partial)=0$, where $\mathcal B' = \{ u(e_{\lambda})\}$.
\end{enumerate}
\end{lem}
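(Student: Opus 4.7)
For part (a), my plan is to unwind the defining formula \eqref{def j-op} of the $j$-operator. Writing $\partial(e_{\lambda})=\sum_{\mu}e_{\mu}b_{\mu\lambda}$ with $b_{\mu\lambda}\in B$, the formula gives
\[
j_X^{\mathcal{B}}(\partial)(e_{\lambda})=\sum_{\mu}(-1)^{|e_{\mu}||X|}e_{\mu}\,\frac{db_{\mu\lambda}}{dX}.
\]
Since $\{e_{\lambda}\}$ is a $B$-basis of $N$, the hypothesis $j_X^{\mathcal{B}}(\partial)=0$ forces $db_{\mu\lambda}/dX=0$ for all $\mu,\lambda$, which by Lemma~\ref{d/dX} means $b_{\mu\lambda}\in A$. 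Setting $M:=\sum_{\lambda}e_{\lambda}A\subseteq N$, we conclude $\partial(M)\subseteq M$. The Leibniz rule over $B$ restricts to a Leibniz rule over $A$, and $\partial^{2}=0$ is inherited from $N$, so $(M,\partial|_{M})$ is a DG $A$-module. Finally, the canonical graded $B$-linear map $M\otimes_{A}B\to N$, $e_{\lambda}\otimes 1\mapsto e_{\lambda}$, is clearly an isomorphism, and it intertwines differentials since both sides send $e_{\lambda}b$ to $\partial(e_{\lambda})b+(-1)^{|e_{\lambda}|}e_{\lambda}d^{B}b$. Hence $M$ is a lifting.

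For part (b), suppose $(N,\partial)$ admits a lifting; choose a free $A$-basis $\{f_{\lambda}\}$ of that lifting (note that this basis can be indexed by the same set $\Lambda$ with matching degrees as $\mathcal{B}$, because $M\otimes_{A}B\cong N$ forces the ranks in each degree to coincide). Then $\{f_{\lambda}\}$ is a $B$-basis of $N$, and because the differential preserves the $A$-span of the $f_{\lambda}$, we have $\partial(f_{\lambda})=\sum_{\mu}f_{\mu}a_{\mu\lambda}$ with $a_{\mu\lambda}\in A$. By Lemma~\ref{d/dX}, $da_{\mu\lambda}/dX=0$, so the same formula \eqref{def j-op} gives $j_X^{\mathcal{B}'}(\partial)=0$ where $\mathcal{B}'=\{f_{\lambda}\}$. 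Since $\{e_{\lambda}\}$ and $\{f_{\lambda}\}$ are two graded $B$-bases of $N$ on the same index set, the graded $B$-linear extension of $u(e_{\lambda}):=f_{\lambda}$ defines an automorphism $u\in\mathcal{E}$, and $\mathcal{B}'=\{u(e_{\lambda})\}$ gives the claim.

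The only delicate point, which I would address explicitly, is the matching of index sets in part (b): the lifting comes with its own basis, and one needs the isomorphism $M\otimes_{A}B\cong N$ together with the graded Nakayama-type observation that free $A$-bases of $M$ correspond to free $B$-bases of $N$ of the same graded type, in order to declare the existence of a graded automorphism $u$ interchanging the two bases. Everything else is a direct unwinding of the $j$-operator formula and of Lemma~\ref{d/dX}, so no Leibniz gymnastics from Section~\ref{sec20200314c} is actually required.
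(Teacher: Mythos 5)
Your proof is correct and follows essentially the same route as the paper: part (a) is exactly the observation that vanishing of $j_X^{\mathcal B}(\partial)$ forces the matrix entries $b_{\mu\lambda}$ of $\partial$ into $A$, so $\partial(M)\subseteq M$, and part (b) transports the $A$-entried matrix of $\partial^M\otimes_A B$ through the DG isomorphism $u$, which is precisely the change-of-basis computation the paper invokes from Proposition~\ref{automorphism}. The index-matching concern you flag in (b) is already settled by the hypothesis, since the lifting is given as $M=\sum_\lambda e_\lambda A$, i.e., free on the same graded basis $\{e_\lambda\}$, so no Nakayama-type rank comparison is needed.
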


\begin{proof}
(a) If $j_X^{\mathcal B}(\partial)=0$,  then  $\partial (M) \subseteq M$. Thus, $(M, \partial |_{M})$ is a lifting of the DG $B$-module $(N,\partial)$ to $A$.

(b)
If  $(M,\partial^M)$ is a lifting of $N$ to $A$, then we have an isomorphism $(N, \partial ) \cong (M , \partial^M) \otimes _A B$ of DG $B$-modules, that is, there is a graded $B$-module automorphism $u\in \E$ satisfying $\partial  = u ( \partial ^M \otimes _A B) u ^{-1}$.
Then, as in the first paragraph of the proof of Proposition~\ref{automorphism}, we have
$$
j_X^{\mathcal B'} (\partial ) =
j_X^{\mathcal B'} (u ( \partial ^M \otimes _A B) u ^{-1}) =
uj_X^{\mathcal B} (\partial ^M \otimes _A B)u^{-1} = 0
$$
as desired.
\end{proof}

The next result follows from Theorem~\ref{characterization} and Lemma~\ref{lifting}.

\begin{cor}\label{liftable condition}
Let  $(N, \partial)$ be a DG $B$-module and $\Delta \in \Der _A( \D)$ be homogeneous.
Assume that the following conditions hold:
\begin{enumerate}[\rm(1)]
\item  $\Delta (X) = 1$ if $|X|$ is odd, and  $\Delta (X^{(n)}) = X^{(n-1)}$ for $n >0$  if  $|X|$ is even.
\item  There is a set of orthogonal idempotents  $\{ \epsilon _{\lambda} \}\subseteq \E$  corresponding to a basis  $\mathcal B=\{ e_{\lambda}\}$ of $N$ as a free $B$-module such that  $\Delta (\epsilon _{\lambda})=0$  for all $\lambda$.
\item $\Delta (\partial ) =0$.
\end{enumerate}
Then the DG $B$-module $(N, \partial)$ has a lifting  $(M, \partial |_{M})$ to  $A$,
where  $M = \sum _{\lambda} e_{\lambda} A$.
\end{cor}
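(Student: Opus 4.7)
The plan is to observe that this corollary is essentially a direct synthesis of the two main results just proved. First I would note that hypotheses (1) and (2) of the corollary match precisely the two hypotheses of Theorem~\ref{characterization}, with the homogeneity of $\Delta$ also being assumed in both statements. Applying that theorem therefore identifies $\Delta$ with a concrete $j$-operator, namely
\[
\Delta = j_X^{\mathcal B}.
\]

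Next I would feed this identification into hypothesis (3). Since $\Delta(\partial) = 0$ and $\Delta = j_X^{\mathcal B}$, we obtain $j_X^{\mathcal B}(\partial) = 0$. This is exactly the assumption of Lemma~\ref{lifting}(a), applied to the DG $B$-module $(N,\partial)$ with the specified basis $\mathcal B = \{e_\lambda\}$. That lemma then yields directly that the graded $A$-submodule $M = \sum_\lambda e_\lambda A$ is stable under the differential $\partial$, so $(M, \partial|_M)$ is a DG $A$-module whose base change along $A \to B$ recovers $(N,\partial)$, giving the desired lifting.

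There is no genuine obstacle here: the work has already been done in Theorem~\ref{characterization} and Lemma~\ref{lifting}. The only thing to check is that the basis $\mathcal B$ appearing in hypothesis (2) (which makes $\Delta$ equal to $j_X^{\mathcal B}$) is the same basis that determines $M$ in the conclusion, and this is true by the way the statement is set up. So the proof reduces to the chain Theorem~\ref{characterization}~$\Rightarrow$ $\Delta = j_X^{\mathcal B}$ $\Rightarrow$ $j_X^{\mathcal B}(\partial) = 0$ $\Rightarrow$ Lemma~\ref{lifting}(a) $\Rightarrow$ existence of the lifting $M$.
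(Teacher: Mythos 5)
Your proposal is correct and matches the paper's intended argument exactly: the paper itself states only that the corollary ``follows from Theorem~\ref{characterization} and Lemma~\ref{lifting},'' and your chain $\Delta = j_X^{\mathcal B}$ (from hypotheses (1) and (2) via Theorem~\ref{characterization}), hence $j_X^{\mathcal B}(\partial)=0$ (from hypothesis (3)), hence the lifting $M=\sum_\lambda e_\lambda A$ (from Lemma~\ref{lifting}(a)) is precisely that deduction. Your remark that the basis in hypothesis (2) is the same one defining $M$ is the right detail to check.
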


\begin{para}
In Corollary \ref{liftable condition}, if $(N, \partial)$ is a semifree DG $B$-module with the semibasis  $\mathcal B$, then $(M, \partial |_M)$ is a semifree DG $A$-module as well and  $\mathcal B$  is its semibasis.
\end{para}

%We now investigate the lifting and weak lifting properties case by case.

%\subsection{The case where $|X|$ is even}

%In this subsection we always assume that $|X|$ is even.

The following is an immediate consequence of Proposition~\ref{even} and Lemma~\ref{lifting}.

\begin{prop}\label{cor20200318f}
Assume that $|X|$ is even.
Let  $(N, \partial)$ be a DG $B$-module and $\Delta \in \Der _A(\D)$ be homogeneous.
Assume that the following conditions hold:
\begin{enumerate}[\rm(1)]
\item  $N$ is bounded below.
\item  $\Delta (X^{(n)}) = X^{(n-1)}$ for $n >0$.
\item $\Delta (\partial ) =0$.
\end{enumerate}
Then the DG $B$-module $(N, \partial)$ is liftable to $A$.
\end{prop}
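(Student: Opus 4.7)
The plan is to simply chain together Proposition~\ref{even} and Lemma~\ref{lifting}(a), since the hypotheses of Proposition~\ref{cor20200318f} are essentially designed to feed into these two results. Because $|X|$ is even, and hypotheses (1) and (2) of the present proposition are precisely hypotheses (1) and (2) of Proposition~\ref{even}, I would first invoke Proposition~\ref{even} to obtain a basis $\mathcal{B} = \{e_\lambda\}$ of $N$ as a free (graded) $B$-module such that $\Delta = j_X^{\mathcal{B}}$ as mappings $\D \to \D$.

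Then I would use hypothesis (3), namely $\Delta(\partial) = 0$, to conclude that $j_X^{\mathcal{B}}(\partial) = 0$. This is exactly the hypothesis of Lemma~\ref{lifting}(a), which immediately produces a lifting $M = \sum_\lambda e_\lambda A$ of $(N, \partial)$ to $A$, with differential $\partial|_M$. Since by Lemma~\ref{lifting}(a), $\partial(M) \subseteq M$, the pair $(M, \partial|_M)$ is a DG $A$-module, and $(N, \partial) \cong (M, \partial|_M) \otimes_A B$ as DG $B$-modules, establishing weak liftability (in fact liftability) of $(N, \partial)$ to $A$.

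There is no substantial obstacle here since both Proposition~\ref{even} and Lemma~\ref{lifting} have already been proved in the preceding sections. The only thing to double-check is that the conditions line up cleanly: Proposition~\ref{even} requires no assumption on $\partial$ and delivers a basis that makes $\Delta$ coincide with the $j$-operator; and Lemma~\ref{lifting}(a) then converts the vanishing $j_X^{\mathcal{B}}(\partial)=0$ into the existence of an $A$-lift. Hence the proof is just a two-line chain of applications of these results.
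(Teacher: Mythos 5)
Your proposal is correct and is exactly the argument the paper intends: the paper introduces this proposition as ``an immediate consequence of Proposition~\ref{even} and Lemma~\ref{lifting},'' and your two-step chain (use Proposition~\ref{even} to produce a basis $\mathcal B$ with $\Delta=j_X^{\mathcal B}$, then feed $j_X^{\mathcal B}(\partial)=0$ into Lemma~\ref{lifting}(a)) is precisely that. No gaps.
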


%\begin{proof}
%By Theorem~\ref{even} and the first two conditions,  there is a basis $\mathcal B$ such that  $\Delta = j_X ^{\mathcal B}$.
%Then  $j_X^{\mathcal B} (\partial) = 0$ by the third condition.
%Hence, by Lemma~\ref{lifting} the DG $B$-module $(N, \partial)$ has a lifting  $(M, \partial |_M)$,  where $M = \Sigma_{\lambda} e_{\lambda} A$.
%\end{proof}

This enables us to prove the following result~\cite[Theorem(1), p. 343]{OY}.

\begin{cor}\label{cor20201111s}
Assume that $|X|$ is even.
Let  $L$ be a DG $B$-module that is bounded below.
If  $\Ext _B^{|X|+1}(L, L)=0$, then  $L$ is liftable to $A$.
\end{cor}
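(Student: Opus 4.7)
The plan is to invoke Proposition~\ref{cor20200318f} for a bounded-below semifree resolution of $L$, producing the required $\Delta \in \Der _A(\D)$ by correcting a $j$-operator with an inner derivation whose existence is forced by the $\Ext$-vanishing hypothesis.

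First I would replace $L$ by a bounded-below semifree resolution $(N,\partial)$, so that $N$ is free as a graded $B$-module with a basis $\mathcal{B} = \{e_\lambda\}$ and $\Ext_B^{|X|+1}(N,N) \cong \Ext_B^{|X|+1}(L,L) = 0$; since liftability is invariant under such replacement, it suffices to lift $(N,\partial)$. Form the $j$-operator $j := j_X^{\mathcal{B}}\colon \D \to \D$, which belongs to $\Der_A(\D)$ and is homogeneous of degree $-|X|$ by Lemma~\ref{lem20200318s}(a).

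By Lemma~\ref{j-operator properties}(c), $[j(\partial),\partial] = 0$, so $j(\partial)\in \E$ is a cocycle of degree $-|X|-1$ in the endomorphism complex $\End_B^*(N)$ equipped with its natural differential $[\partial,-]$. Its cohomology class represents an element of $\Ext_B^{|X|+1}(N,N) = 0$, so there exists $h \in \E$ with $|h| = -|X|$ satisfying $j(\partial) = [\partial, h]$.

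Now set $\Delta := j + ad(h)$. By Lemma~\ref{lem20200318s}, $\Delta$ belongs to $\Der_A(\D)$ and is homogeneous of degree $-|X|$. Since $h$ is $B$-linear, $[h,\ell_b] = 0$ for every $b \in B$ by~\ref{para20200314b}, so $ad(h)$ vanishes on elements of $B$ and $\Delta(X^{(n)}) = j(X^{(n)}) = X^{(n-1)}$ for every $n > 0$. Because $|X|$ is even, $|h| = -|X|$ and $|\partial| = -1$ give $(-1)^{|h||\partial|} = 1$, whence $ad(h)(\partial) = [h,\partial] = -[\partial,h] = -j(\partial)$, and therefore $\Delta(\partial) = j(\partial) - j(\partial) = 0$. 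Proposition~\ref{cor20200318f} then yields a lifting of $(N,\partial)$, and hence of $L$, to $A$. The one point that requires care is the sign forcing $\Delta(\partial) = 0$: it relies crucially on $|X|$ being even, since for odd $|X|$ the same inner correction would instead satisfy $ad(h)(\partial) = +j(\partial)$, producing $\Delta(\partial) = 2j(\partial)$ and requiring a genuinely different construction (which is why the odd case is handled separately via Proposition~\ref{odd}).
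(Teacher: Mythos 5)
Your proof is correct and follows essentially the same route as the paper: pass to a bounded-below semifree resolution, use Lemma~\ref{j-operator properties}(c) and the $\Ext$-vanishing to write $j_X(\partial)=[\partial,h]$, and correct $j_X$ by $ad(h)$ so that Proposition~\ref{cor20200318f} applies. The only added content is your explicit sign check that $\Delta(\partial)=0$ (which the paper leaves implicit), and that check is accurate; the parenthetical claim that the sign is the reason the odd case differs is a slight oversimplification (for odd $|X|$ one would simply take $\Delta=j_X-ad(h)$; the real obstruction there is the condition $\Delta^2=0$), but this does not affect the argument.
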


\begin{proof}
Without loss of generality we can replace $L$ with a semifree resolution $(N,\partial)$ and prove the assertion for $N$. Note that $N$ is free as an underlying graded $B$-module that is bounded below and we have $\Ext _B^{|X|+1}(N, N)=0$.
Then, by Lemma~\ref{j-operator properties}(c) we see that  $j_X (\partial)$ is a cycle in $\Hom _B (N,N(-|X|-1))$. By our Ext vanishing assumption, there is $\gamma \in \E$ with $|\gamma|=-|X|$ such that  $j_X(\partial) = [  \partial, \gamma]$.
Setting  $\Delta = j_X + ad (\gamma)$, by Lemma~\ref{lem20200318s} we have $\Delta \in \Der _A(\D)$, $\Delta (X^{(n)}) =j_X (X^{(n)})= X^{(n-1)}$, and $\Delta (\partial)=0$. Now the assertion follows from Proposition~\ref{cor20200318f}.
\end{proof}

The following is a consequence of Proposition~\ref{odd} and Lemma~\ref{lifting}.

\begin{cor}\label{odd lift}
Assume that $|X|$ is odd.
Let  $(N, \partial)$ be a DG $B$-module  and $\Delta \in \Der _A(\D)$ be homogeneous.
Assume that the following conditions hold:
\begin{enumerate}[\rm(1)]
\item  $\Delta (X) = 1$.
\item $\Delta (\partial ) =0$.
\item $\Delta ^2 =0$.
\end{enumerate}
Then the DG $B$-module $(N, \partial)$ is liftable to $A$.
\end{cor}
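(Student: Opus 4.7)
The plan is to combine Proposition~\ref{odd} with Lemma~\ref{lifting}(a) in a completely direct way, since the technical heavy lifting has already been done in those two results. First I would invoke Proposition~\ref{odd}: the hypotheses (1) and (3) of the corollary are precisely the hypotheses of that proposition, so it produces a basis $\mathcal{B} = \{e_\lambda\}$ of $N$ (as a free graded $B$-module) for which $\Delta = j_X^{\mathcal{B}}$ as maps $\D \to \D$.

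Next I would use hypothesis (2): since $\partial \in \Diff_B(N) \subseteq \D$, applying the identification from Proposition~\ref{odd} gives
\[
j_X^{\mathcal{B}}(\partial) \;=\; \Delta(\partial) \;=\; 0.
\]
At this point Lemma~\ref{lifting}(a) applies directly to the DG $B$-module $(N, \partial)$ with the basis $\mathcal{B}$: it yields that $M := \sum_\lambda e_\lambda A$ is stable under $\partial$, so that $(M, \partial|_M)$ is a DG $A$-module satisfying $(M, \partial|_M) \otimes_A B \cong (N, \partial)$ as DG $B$-modules. This is exactly a lifting of $(N, \partial)$ to $A$.

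There is essentially no obstacle here: the corollary is packaging the combined content of Proposition~\ref{odd} (which was the real work, handling the odd-degree splitting $\E = \E_0 \oplus \E_1$ and the extraction of idempotents killing $\Delta$) and Lemma~\ref{lifting}(a) (which converts the vanishing of the $j$-operator on $\partial$ into an honest lifting). The only thing to check, which is automatic, is that $\partial$ is a legitimate input for $\Delta$, i.e.\ lies in $\D$; this is noted in \ref{def D}, where $\Diff_B(N) \subseteq \D$. Thus the proof should be only a few lines.
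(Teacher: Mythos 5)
Your proposal is correct and is exactly the argument the paper intends: the corollary is stated there as an immediate consequence of Proposition~\ref{odd} (which supplies a basis $\mathcal{B}$ with $\Delta=j_X^{\mathcal{B}}$ from hypotheses (1) and (3)) combined with Lemma~\ref{lifting}(a) applied via hypothesis (2). Nothing is missing.
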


\begin{para}
Comparing to the case where $|X|$ is even, we require the condition $\Delta ^2=0$ for $\Delta$  to be a $j$-operator in Proposition~\ref{odd}.
Without this condition we only get weakly liftability that is not necessarily a liftable condition; see the next result.
\end{para}

\begin{prop}\label{odd weak}
Suppose that $|X|$ is odd.
Let  $(N, \partial)$ be a DG $B$-module  and  $\Delta \in \Der _A(\D)$ be homogeneous.
Assume that the following conditions hold:
\begin{enumerate}[\rm(1)]
\item  $\Delta ^2= ad (\alpha )$  for some $\alpha \in \E$.
\item  $\Delta (X) = 1$.
\item $\Delta (\partial ) =0$.
\end{enumerate}
Then the DG $B$-module $(N, \partial)$ is weakly liftable to $A$.
More precisely, the two-fold extension  $N^{\sharp} = N \oplus N(-|X|)$ with differential $\partial^\sharp =
\left[\begin{smallmatrix}\partial & 0 \\ 0 & -\partial  \end{smallmatrix}\right]$ is liftable to $A$.
\end{prop}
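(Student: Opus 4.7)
The plan is to construct a weak $j$-operator $\widetilde{\Delta}$ on the doubled module $N^{\sharp}$ satisfying the three hypotheses of Corollary~\ref{odd lift}; this will immediately give the lifting of $(N^\sharp, \partial^\sharp)$ to $A$ and hence the weak liftability of $N$.

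As a preliminary step I would extract two identities from $\Delta^2 = ad(\alpha)$. First, applying $\Delta^2$ to $\partial$ and using $\Delta(\partial) = 0$ gives $[\alpha,\partial] = 0$, which (since $|\alpha| = -2|X|$ is even) is the ordinary commutation $\alpha\partial = \partial\alpha$. Second, since $\Delta \circ \Delta^2 = \Delta^2 \circ \Delta$ one has $[\Delta, ad(\alpha)] = 0$ in the graded commutator (the sign is $+$ because $|\Delta|\,|\alpha|$ is even), and Lemma~\ref{useful lemma} converts this to $ad(\Delta(\alpha)) = 0$. Since $|\Delta(\alpha)| = -3|X| < 0$, Corollary~\ref{cor to kernel of ad} then forces $\Delta(\alpha) = 0$.

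Next, view elements of $\E^\sharp := \End_B^*(N^\sharp)$ as $2 \times 2$ block matrices with entries in $\E$ (with the natural shifts coming from $N^\sharp = N \oplus N(-|X|)$), and let $I$ denote the identity of $\E^\sharp$. Introduce the homogeneous element
\[
\Omega := \begin{pmatrix} 0 & 1 \\ -\alpha & 0 \end{pmatrix} \in \E^\sharp
\]
of degree $-|X|$, whose square computes to $\Omega^2 = -\alpha\, I$. Set
\[
\widetilde{\Delta} := \Delta^{\oplus 2} + ad(\Omega),
\]
where $\Delta^{\oplus 2}$ is the entry-wise extension of $\Delta$ to $\D^\sharp$; by Lemma~\ref{lem20200318s}(b),(c) this is a homogeneous element of $\Der_A(\D^\sharp)$ of degree $-|X|$. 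The three conditions of Corollary~\ref{odd lift} are then verified as follows: $\widetilde{\Delta}(X) = 1$ is immediate, since $X$ is central in $\E^\sharp$ and $\Delta(X) = 1$ entry-wise; $\widetilde{\Delta}(\partial^\sharp) = 0$ follows because $\Delta^{\oplus 2}(\partial^\sharp) = 0$ (from $\Delta(\partial) = 0$) and the direct block computation
\[
[\Omega, \partial^\sharp] = \begin{pmatrix} 0 & 0 \\ \partial\alpha - \alpha\partial & 0 \end{pmatrix}
\]
vanishes by $\alpha\partial = \partial\alpha$; and finally, expanding via Lemma~\ref{useful lemma} together with the identity $ad(\Omega)^2 = ad(\Omega^2)$ (valid since $|\Omega|$ is odd),
\[
\widetilde{\Delta}^2 = ad(\alpha\, I) + ad(\Delta^{\oplus 2}(\Omega)) + ad(\Omega^2) = ad(\alpha\, I) + 0 + ad(-\alpha\, I) = 0,
\]
where the middle term vanishes because $\Delta(1) = \Delta(\alpha) = 0$ give $\Delta^{\oplus 2}(\Omega) = 0$. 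Corollary~\ref{odd lift} then produces the lifting of $(N^\sharp, \partial^\sharp)$ over $A$.

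The main obstacle is finding $\Omega$: one needs an odd-degree element of $\E^\sharp$ whose square cancels the obstruction $(\Delta^{\oplus 2})^2 = ad(\alpha\,I)$, is simultaneously annihilated by $\Delta^{\oplus 2}$, and graded-commutes with $\partial^\sharp$. The off-diagonal matrix above achieves all three, but this works only because the preliminary identities $\Delta(\alpha) = 0$ and $\alpha\partial = \partial\alpha$ have been established; once $\Omega$ is in hand, the remainder is routine sign-tracking.
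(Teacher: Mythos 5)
Your proof is correct and follows essentially the same route as the paper: both arguments first establish $[\partial,\alpha]=0$ and $\Delta(\alpha)=0$, pass to the doubled module $N^\sharp$, and perturb the (signed) extension of $\Delta$ by $ad$ of an off-diagonal matrix squaring to $-\alpha^\sharp$ before invoking Corollary~\ref{odd lift} (the paper takes $\beta^\sharp=\left[\begin{smallmatrix}0&\alpha\\-1&0\end{smallmatrix}\right]$ where you take its mirror image $\left[\begin{smallmatrix}0&1\\-\alpha&0\end{smallmatrix}\right]$, and packages the final verification as Lemma~\ref{sublemma}). The only detail to make explicit is that your ``entry-wise'' $\Delta^{\oplus 2}$ must carry the sign $(-1)^{|\Delta|k}$ on the second row, exactly as in~\ref{para20200427a}, in order to be a derivation on $\D^\sharp$ and to satisfy $\Delta^{\oplus 2}(X)=1$.
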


We need the following lemma to prove Proposition~\ref{odd weak}.

\begin{lem}\label{sublemma}
Consider  the assumptions of Proposition \ref{odd weak}. If there is  $\beta \in \E$ with
\begin{equation}\label{beta}
\alpha = -\beta ^2, \quad \Delta (\beta )=0, \ \ \text{and} \ \ \ [\partial, \beta] = 0
\end{equation}
then $(N, \partial)$ is liftable to $A$.
\end{lem}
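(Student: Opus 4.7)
The plan is to reduce the statement to Corollary~\ref{odd lift} by producing a square-zero weak $j$-operator built from $\Delta$ and $\beta$. Set $\Delta' := \Delta + ad(\beta)$. By Lemma~\ref{lem20200318s}(b) we have $ad(\beta) \in \Der_A(\D)$, and then $\Delta' \in \Der_A(\D)$ by part~(c) of the same lemma. Moreover $\Delta'$ is homogeneous of degree $-|X|$: by hypothesis $|\Delta| = -|X|$, and $|\beta| = -|X|$ is forced by $\alpha = -\beta^2$ together with $|\alpha| = -2|X|$ (the latter coming from $\Delta^2 = ad(\alpha)$).

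Next, I would verify the three hypotheses of Corollary~\ref{odd lift} for $\Delta'$. Since $\beta \in \E$ is $B$-linear, $[\beta, \ell_X] = 0$ by~\ref{para20200314b}, so $ad(\beta)(X) = 0$ and $\Delta'(X) = \Delta(X) = 1$. Similarly, the hypothesis $[\partial, \beta] = 0$ gives $ad(\beta)(\partial) = 0$, whence $\Delta'(\partial) = \Delta(\partial) = 0$.

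The key step is showing $(\Delta')^2 = 0$. Expanding and grouping the cross terms,
\[
(\Delta')^2 \;=\; \Delta^2 \,+\, \bigl(\Delta \circ ad(\beta) + ad(\beta) \circ \Delta\bigr) \,+\, ad(\beta)^2.
\]
Because $|\Delta|$ and $|ad(\beta)|$ are both odd, the parenthesized middle term is exactly the graded commutator $[\Delta, ad(\beta)]$, which by Lemma~\ref{useful lemma} equals $ad(\Delta(\beta)) = ad(0) = 0$. A short graded-bracket calculation (using that $|\beta|$ is odd, so $(-1)^{|\beta|^2} = -1$) shows the identity $ad(\beta)^2 = ad(\beta^2)$; consequently $ad(\beta)^2 = ad(-\alpha) = -ad(\alpha) = -\Delta^2$. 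Combining, $(\Delta')^2 = \Delta^2 - \Delta^2 = 0$, and Corollary~\ref{odd lift} applied to $\Delta'$ yields a lifting of $(N, \partial)$ to $A$.

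The main obstacle I anticipate is the sign bookkeeping for odd-degree operators: the two cross terms $\Delta \circ ad(\beta)$ and $ad(\beta) \circ \Delta$ do not cancel by themselves but combine into a graded commutator, and the clean identity $ad(\beta)^2 = ad(\beta^2)$ is genuinely an odd-degree phenomenon (in even degree one gets $[ad(\beta), ad(\beta)] = 0$ instead). Once these signs are handled, the verification is routine and the conclusion follows directly from Corollary~\ref{odd lift}.
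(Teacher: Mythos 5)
Your proposal is correct and follows essentially the same route as the paper's proof: both set $\Gamma=\Delta+ad(\beta)$, check $\Gamma(X)=1$ and $\Gamma(\partial)=0$, and show $\Gamma^2=ad(\alpha)+[\Delta,ad(\beta)]+ad(\beta^2)=0$ using Lemma~\ref{useful lemma} and the odd-degree identity $ad(\beta)^2=ad(\beta^2)$, then invoke Corollary~\ref{odd lift}. The sign bookkeeping you flag is handled exactly as you describe.
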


\begin{proof}
Note that  $|\Delta|=-|X|$ is odd  and $|\alpha | = 2 |\Delta | = - 2|X|$ is even  in the setting of Proposition~\ref{odd weak}.
Setting  $\Gamma = \Delta + ad (\beta) \in \Der _A(\D)$ we see that
$\Gamma (X) = \Delta (X) =1$  and  $\Gamma (\partial ) = \Delta (\partial ) - [\beta, \partial]=0$.
Moreover, noting  that  $|\beta | =  |\Delta | $ is odd we have
$$
\Gamma ^2 = \Delta ^2  + \Delta ad (\beta) + ad(\beta) \Delta +\left(  ad(\beta) \right) ^2
= ad (\alpha) + [\Delta, ad (\beta)] + ad (\beta ^2)
$$
which is zero by (\ref{beta}) and Lemma~\ref{useful lemma}.
Hence, it follows from Corollary \ref{odd lift} that  $(N, \partial)$ is liftable to $A$.
\end{proof}

\begin{para}\label{para20200427a}
Let $(N,\partial^N)$ be a DG $B$-module. The two-fold extension of $N$ of degree $k$  is
$N^{\sharp} = N \oplus N(k)$ with the differential $\partial^\sharp =  \left[\begin{smallmatrix}\partial^N & 0 \\ 0 & (-1)^{k} \partial^N  \end{smallmatrix}\right]$. Note that  $N$ is a right DG $B$-module, and so is $N^{\sharp}$. The right DG $B$-module structure on $N^{\sharp}$ comes from the formula $\left[\begin{smallmatrix} x\\y \end{smallmatrix}\right]\cdot b=\left[\begin{smallmatrix} xb\\yb \end{smallmatrix}\right]$ while its left DG $B$-module structure is given by
\begin{equation}\label{eq20200528a}
\ell_b(\left[\begin{matrix}x \\ y \end{matrix}\right])=\left[\begin{matrix}\ell_b(x) \\ (-1)^{|b|k}\ell_b(y) \end{matrix}\right]
\end{equation}
for all $b\in B$ and all $\left[\begin{smallmatrix}x \\ y \end{smallmatrix}\right]\in N^{\sharp}$.
In this case, $\E^\sharp = \End^* _B(N^\sharp) = \left[\begin{smallmatrix}\E & \E(-k) \\ \E(k)  & \E  \end{smallmatrix}\right]$ and $\D^\sharp =  \E^\sharp + \E^\sharp \circ\Diff _B(N^\sharp)$.
Note that  $\partial ^\sharp \in \Diff _B(N^\sharp)$, and hence, $\D^\sharp =  \E^\sharp + \E^\sharp \circ\partial ^\sharp$.

Every homogeneous element $\Delta \in \Der _A(\D)$ is extended to  $\Delta^\sharp \colon \D^\sharp \to \D^\sharp$  by defining
$$
\Delta ^\sharp
\left[\begin{matrix} a & b \\ c  & d  \end{matrix}\right]
=
\left[\begin{matrix} \Delta (a) & \Delta (b) \\ (-1)^{|\Delta |k} \Delta (c)  & (-1)^{|\Delta |k}\Delta (d)  \end{matrix}\right].
$$
In particular, we have
$$
\Delta ^\sharp (  \partial ^\sharp ) =
\left[\begin{matrix} \Delta (\partial^N) & 0 \\ 0  & (-1)^{k+|\Delta|k}\Delta (\partial^N )  \end{matrix}\right].
$$
Note that $\Delta ^\sharp$ satisfies the Leibniz rule.
In fact,
let
$\alpha = \left[\begin{smallmatrix}a & b  \\ c & d \end{smallmatrix}\right]$ and
$\alpha '= \left[\begin{smallmatrix}a' & b'  \\ c' & d' \end{smallmatrix}\right]$ be elements of $\D^\sharp$.
If  $n$  is the degree of $\alpha$ as an element of $\D^\sharp$, then
$|a| = |d| =n$, $|b| = n-k$,  and  $|c| = n+k$ as elements of  $\E$. Now one can check that the equality
$$
\Delta ^\sharp (\alpha \alpha')=\Delta ^\sharp (\alpha) \alpha' + (-1)^{|\Delta^\sharp| n } \alpha \Delta ^\sharp (\alpha ')
$$
holds.
Therefore,  $\Delta ^\sharp \in \Der _A(\D^\sharp)$.

Also, since each element  $X^{(m)}$ acts on $N ^\sharp$ diagonally as in~\eqref{eq20200528a}, we have
\begin{align*}
\Delta^\sharp(X^{(m)}) &= \Delta^\sharp \left[\begin{matrix} X^{(m)} &0\\0& (-1)^{|X^{(m)}|k}X^{(m)} \end{matrix}\right]\\
&= \left[\begin{matrix} \Delta(X^{(m)}) &0\\0& (-1)^{\left(|X^{(m)}|-|X|\right)k}\Delta(X^{(m)}) \end{matrix}\right].
\end{align*}
(Here, we assume that $X^{(0)}=1$, $X^{(1)}=X$, and  $X^{(m)}$=0 for $m\geq 2$ if $|X|$ is odd.)
\end{para}

\begin{para} \emph{Proof of Proposition~\ref{odd weak}}.
Note that  $\alpha \in \E$ is a cycle, i.e.,  $[\partial, \alpha]=0$.
In fact, $ad ( \alpha ) (\partial ) = \Delta^2(\partial ) = 0$ by conditions (1) and (3).
Also note that  $\Delta (\alpha) =0$.
In fact, by Lemma~\ref{useful lemma} we have $0 = [\Delta, \Delta ^2] = [\Delta, ad (\alpha)] = ad (\Delta (\alpha))$, where  $|\Delta (\alpha)| = |\Delta | + |\alpha | = 3 |\Delta | = -3|X|$ is negative. Hence,  $\Delta (\alpha) =0$  by Corollary \ref{cor to kernel of ad}.

Consider the two-fold graded $B$-free module  $N^\sharp= N \oplus N(-|X|)$.
By~\ref{para20200427a}, we have  $\partial ^\sharp=\left[\begin{smallmatrix}\partial & 0 \\ 0 & -\partial  \end{smallmatrix}\right] \in \Diff _B(N^\sharp)$ and $\D^\sharp =  \E^\sharp + \E^\sharp\circ \partial ^\sharp$.
Furthermore, $\Delta$ is extended to  $\Delta^\sharp \colon \D^\sharp \to \D^\sharp$  by $\Delta ^\sharp
\left[\begin{smallmatrix} a & b \\ c  & d  \end{smallmatrix}\right]
=
\left[\begin{smallmatrix} \Delta (a) & \Delta (b) \\ -\Delta (c)  & -\Delta (d)  \end{smallmatrix}\right]$
and  $\Delta ^\sharp \in \Der _A(\D^\sharp)$. We also have $\Delta^\sharp ( X) = \left[\begin{smallmatrix} \Delta (X) & 0 \\ 0  & \Delta (X)  \end{smallmatrix}\right] = 1$ and $\Delta^\sharp (\partial ^\sharp)=
\left[\begin{smallmatrix} \Delta (\partial) & 0 \\ 0  & \Delta (\partial )  \end{smallmatrix}\right]=0$, by our assumptions.
Setting $\alpha ^\sharp = \left[\begin{smallmatrix} \alpha & 0 \\ 0 & \alpha \end{smallmatrix}\right] \in \E^\sharp$, we note that $(\Delta ^\sharp)^2 = ad ( \alpha^\sharp )$ and $\Delta^\sharp (\alpha ^\sharp ) =0$.
Therefore, $\partial ^\sharp$, $\Delta ^\sharp$, and $\alpha ^\sharp$ satisfy  the same assumptions as $\partial$, $\Delta$, and $\alpha$ in Proposition~\ref{odd weak}.

Now set $\beta ^\sharp =  \left[\begin{smallmatrix} 0 & \alpha \\ -1 & 0 \end{smallmatrix}\right] \in \E^\sharp$.
Then we can verify that $\alpha ^\sharp = - (\beta^\sharp)^2$,
$\Delta ^\sharp(\beta ^\sharp)=\left[\begin{smallmatrix} 0 & \Delta (\alpha ) \\ 0 & 0 \end{smallmatrix}\right]=0$, and  $[\partial ^\sharp, \beta^\sharp] = \left[\begin{smallmatrix} 0 & [\partial, \alpha] \\0 & 0 \end{smallmatrix}\right] = 0$.
Thus, it follows from Lemma \ref{sublemma} that $(N^\sharp, \partial^\sharp)$  is liftable to $A$.
\qed
\end{para}

We can give a proof for~\cite[Theorem 3.6]{nassehyoshino} that uses the notion of $j$-operators.

\begin{cor}\label{cor20201111a}
Assume that $|X|$ is odd. Let  $L$ be a DG $B$-module such that
$\Ext _B^{|X|+1}(L, L)= 0$. Then  $L$ is weakly liftable to $A$.
\end{cor}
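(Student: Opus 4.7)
The plan is to parallel Corollary~\ref{cor20201111s}, but invoke Proposition~\ref{odd weak} in place of Proposition~\ref{cor20200318f}; the task reduces to constructing a homogeneous $\Delta \in \Der_A(\D)$ satisfying conditions~(1)--(3) of Proposition~\ref{odd weak}. First I would replace $L$ with a semifree resolution $(N,\partial)$, so that $N$ is free as an underlying graded $B$-module and $\Ext_B^{|X|+1}(N,N)=0$. By Lemma~\ref{j-operator properties}(c), $j_X(\partial) \in \E$ is a cycle of degree $-|X|-1$ under the differential $[\partial,-]$ on $\End_B^*(N)$, and the $\Ext$ vanishing yields $\gamma \in \E$ with $|\gamma|=-|X|$ and $j_X(\partial) = [\partial,\gamma]$. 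I then set
\[
\Delta := j_X - ad(\gamma),
\]
which is homogeneous of (odd) degree $-|X|$ and lies in $\Der_A(\D)$ by Lemma~\ref{lem20200318s}.

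Conditions~(2) and~(3) of Proposition~\ref{odd weak} follow quickly. For~(2): $\Delta(X) = j_X(X) - [\gamma,\ell_X] = 1 - 0 = 1$, using Lemma~\ref{j-operator properties}(b) together with the fact (from~\ref{para20200314b}) that $[\gamma,\ell_X]=0$ because $\gamma \in \E$ is $B$-linear. For~(3): since $|\gamma||\partial| = |X|$ is odd, the graded commutator satisfies $[\gamma,\partial] = [\partial,\gamma] = j_X(\partial)$, so $\Delta(\partial) = j_X(\partial) - [\gamma,\partial] = 0$. Note that the minus sign in the definition of $\Delta$ is forced by parity and is precisely the odd-case analogue of the plus sign used in Corollary~\ref{cor20201111s}.

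The crux will be verifying condition~(1), $\Delta^2 = ad(\alpha)$ for some $\alpha \in \E$. Because both $j_X$ and $ad(\gamma)$ have odd degree, expanding the square gives
\[
\Delta^2 = j_X^2 - [j_X, ad(\gamma)] + ad(\gamma)^2.
\]
Lemma~\ref{useful lemma} handles the middle summand: $[j_X, ad(\gamma)] = ad(j_X(\gamma))$. A direct computation with graded commutators, using $|\gamma|$ odd, shows $ad(\gamma)^2 = ad(\gamma^2)$ without requiring $2$ to be invertible. The essential new input, specific to the odd-$|X|$ setting, is $j_X^2 = 0$: since $(d/dX)^2 = 0$ on $B$ when $|X|$ is odd, a direct check on matrix entries gives $j_X^2 = 0$ on $\E$; as $j_X^2 \in \Der_A(\D)$ by Lemma~\ref{lem20200318s}(d) and it vanishes on the generating set $\E \cup \{\partial\}$ of $\D$ (also using $j_X(\partial) \in \E$ so that $j_X^2(\partial)=0$), the Leibniz rule forces $j_X^2 = 0$ throughout $\D$. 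Combining the three pieces, $\Delta^2 = ad(\gamma^2 - j_X(\gamma))$ with $\alpha := \gamma^2 - j_X(\gamma) \in \E$, and Proposition~\ref{odd weak} then delivers the weak liftability of $(N,\partial)$, and hence of $L$. The main obstacle in this plan is the identification $\Delta^2 = ad(\alpha)$; in particular, the vanishing $j_X^2 = 0$ is special to the odd-degree case and has no analogue in the even case treated in Corollary~\ref{cor20201111s}.
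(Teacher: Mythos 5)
Your proposal is correct and follows essentially the same route as the paper: pass to a semifree resolution, use the Ext-vanishing to write $j_X(\partial)=[\partial,\gamma]$, set $\Delta=j_X-ad(\gamma)$, verify $\Delta^2=ad(\alpha)$ via $j_X^2=0$ and Lemma~\ref{useful lemma}, and invoke Proposition~\ref{odd weak}. The only (immaterial) difference is the sign of the $j_X(\gamma)$ term in $\alpha$ — the paper records $\alpha=j_X(\gamma)+\gamma^2$ while your expansion gives $\gamma^2-j_X(\gamma)$ — which does not affect the conclusion since any $\alpha\in\E$ with $\Delta^2=ad(\alpha)$ suffices; your extra verifications of $j_X^2=0$ and $ad(\gamma)^2=ad(\gamma^2)$ are details the paper leaves implicit.
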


\begin{proof}
Without loss of generality we can replace $L$ with a semifree resolution $(N,\partial)$ and prove the assertion for $N$. Note that $N$ is free as an underlying graded $B$-module and we have $\Ext _B^{|X|+1}(N, N)=0$.
By Lemma~\ref{j-operator properties}(c) we see that  $j_X (\partial)$ is a cycle in $\Hom _B (N,N(-|X|-1))$. Hence, there is $\gamma \in \E$ with $|\gamma|=-|X|$ such that  $j_X(\partial) = [  \partial, \gamma]$.
Setting  $\Delta = j_X - ad (\gamma)$, by Lemma~\ref{lem20200318s} we have $\Delta \in \Der _A(\D)$, $\Delta (X) =1$, and $\Delta (\partial)=0$.
Since $j_X^2=0$, by Lemma~\ref{useful lemma} we can see that  $\Delta ^2 = ad  (\alpha )$, where  $\alpha = j_X ( \gamma ) + \gamma ^2$.
%r,  using  that $j_X ^2 =0$ and that $|\gamma | = -|X|$ is odd, we have
%$$
%\Delta (\alpha ) = (j _X + ad (\gamma ) ) ( j_X ( \gamma ) + \gamma ^2)
% = j_X(\gamma ^2)  + [\gamma, j_X(\gamma)] =0.
 %$$
Now the assertion follows from Proposition~\ref{odd weak}.
\end{proof}

\section{Na\"ive liftability of DG modules}\label{sec20201110a}

In this section, we give a new characterization of (weak) lifting property of DG modules along simple extensions of DG algebras; see Theorem~\ref{naive thm one variable} below.

The notation used in this section comes from the previous sections.

\begin{para}\label{para20201110b}
If  $(N, \partial ^N)$  is a semifree DG $B$-module with a graded basis $\B$ as a free $B$-module, then it follows from Lemma~\ref{j-operator properties}(c) that $[j_X^{\B}(\partial ^N),\partial ^N]=0$. Hence, $j_X^{\B}(\partial ^N)$ defines the cohomology class  $[ j_X^{\B}(\partial ^N)]$ in $\Ext _B ^{|X|+1}(N, N)$.
\end{para}

\begin{thm}\label{prop20201110b}
Let $(N, \partial ^N)$  be a semifree DG $B$-module. Then the cohomology class $[ j_X^{\B}(\partial ^N)]$ is independent of the choice of the  $B$-free graded basis  $\B$.
\end{thm}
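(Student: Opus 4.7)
The plan is to reduce the statement to Proposition~\ref{automorphism} via the observation that any two graded $B$-bases of $N$ are related by an automorphism of $N$ as a graded free $B$-module, and then to recognize the resulting difference as a coboundary in the Hom-complex computing $\Ext_B^{\ast}(N,N)$.

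\medskip

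First, I would make explicit the comparison of two bases. Let $\mathcal B=\{e_\lambda\}$ and $\mathcal B'=\{e'_\lambda\}$ be two graded $B$-bases of $N$; after fixing a degree-preserving bijection between the index sets (which exists because the graded rank of a graded free $B$-module is well-defined), the $B$-linear map $u\colon N\to N$ determined by $u(e_\lambda)=e'_\lambda$ is a degree $0$ element of $\E$ and an automorphism, with inverse determined by $u^{-1}(e'_\lambda)=e_\lambda$. Thus $\mathcal B'=\{u(e_\lambda)\}$, which is exactly the setup of Proposition~\ref{automorphism}.

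\medskip

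Next I would apply Proposition~\ref{automorphism} to obtain the identity
\[
j_X^{\mathcal B}-j_X^{\mathcal B'}=\operatorname{ad}(\alpha),\qquad \alpha=j_X^{\mathcal B}(u)u^{-1}\in\E,\qquad |\alpha|=-|X|,
\]
as maps $\D\to\D$. Evaluating at the differential $\partial^N\in\Diff_B(N)\subseteq\D$ gives
\[
j_X^{\mathcal B}(\partial^N)-j_X^{\mathcal B'}(\partial^N)=[\alpha,\partial^N].
\]

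\medskip

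The remaining point is to interpret the right-hand side as a coboundary. The complex $\Hom_B^{\ast}(N,N)$ computing $\Ext_B^{\ast}(N,N)$ has differential $D(f)=\partial^N f-(-1)^{|f|}f\partial^N$. A direct computation with $|\alpha|=-|X|$ shows that
\[
[\alpha,\partial^N]=\alpha\partial^N-(-1)^{|X|}\partial^N\alpha=\pm D(\alpha),
\]
where the sign is $-1$ if $|X|$ is even and $+1$ if $|X|$ is odd; in either case $[\alpha,\partial^N]$ is a coboundary of an element of $\Hom_B(N,N(-|X|))$. Therefore
\[
[j_X^{\mathcal B}(\partial^N)]=[j_X^{\mathcal B'}(\partial^N)]\quad\text{in }\Ext_B^{|X|+1}(N,N),
\]
which is the desired conclusion. (Note that by Lemma~\ref{j-operator properties}(c) each representative is indeed a cycle, so the cohomology classes are defined as in~\ref{para20201110b}.)

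\medskip

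\textbf{Main obstacle.} The content of the argument is already packaged in Proposition~\ref{automorphism}; the only real care required is the sign bookkeeping to confirm that $[\alpha,\partial^N]=\pm D(\alpha)$ with the paper's conventions (graded commutator $[f,g]=fg-(-1)^{|f||g|}gf$ and negative homological degree of the differential). This is a routine but sign-sensitive verification, and is the step where it would be easiest to slip up.
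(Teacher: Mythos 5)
Your proof is correct and follows essentially the same route as the paper: invoke Proposition~\ref{automorphism} to write $j_X^{\mathcal B}-j_X^{\mathcal B'}=ad(\alpha)$, evaluate at $\partial^N$, and observe that $[\alpha,\partial^N]$ is null-homotopic. The extra details you supply (the degree-preserving bijection of index sets giving the automorphism $u$, and the sign check identifying $[\alpha,\partial^N]$ with $\pm D(\alpha)$) are accurate and merely make explicit what the paper leaves implicit.
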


\begin{proof}
If  $\B '$  is another $B$-free graded basis of $N$, then by Proposition~\ref{automorphism} we have
$j_X^{\B} - j_X^{\B '}= ad ( \alpha )$,  for some $\alpha \in \E$.
Applying this equality to $\partial ^N$, we have the equality $j_X^{\B}(\partial ^N) - j_X^{\B'}(\partial ^N) = [\alpha,\partial ^N]$,
which means that the left-hand-side is null-homotopic as a DG module homomorphism $N \to N(-|X|-1)$.
Hence  $[j_X^{\B}(\partial ^N) ] = [j_X^{\B '}(\partial ^N)]$  as an element of $\Ext _B ^{|X|+1}(N, N)$.
\end{proof}

\begin{para}
Using the notation of Theorem~\ref{prop20201110b}, we simply denote the cohomology class $[ j_X ^{\B} ( \partial ^N) ]$ in  $\Ext _B ^{|X|+1}(N, N)$  by  $[ j_X ( \partial ^N) ]$.
\end{para}

\begin{para}\label{para20201110d}
Assume that $(N, \partial ^N)$ is a DG $B$-module, and let $N |_A$  denote $N$ regarded as a DG $A$-module via the natural map $A\to B$.
Note that  $N |_A \otimes _A B$  is a DG $B$-module with the differential
$\partial  (n \otimes b) = \partial ^N (n) \otimes b + (-1)^{|n|} n \otimes d^B b$,
for all $n \in N$ and $b \in B$.
If $N$ is a semifree DG $B$-module, then $N |_A$  is a semifree DG $A$-module.
Note also that there is a natural mapping $\pi _N\colon N |_A \otimes _A B \to N$ defined by $n \otimes b \mapsto nb$,
which is a (right) DG $B$-module epimorphism.
\end{para}

\begin{defn}\label{naive definition}
Assume that $N$ is a semifree (right) DG $B$-module. We say that $N$  is {\it na\"ively liftable}  to $A$ if
the map  $\pi _N$ from~\ref{para20201110d} is a split DG $B$-module epimorphism, i.e., there exists a DG $B$-module homomorphism $\rho\colon N \to  N |_A \otimes _A B$ that satisfies the equality $\pi _N \rho = \id _N$. Equivalently, $N$  is na\"ively liftable to $A$ if $\pi_N$ has a right inverse in the abelian category of right DG $B$-modules.
\end{defn}

\begin{para}\label{naive-odd}
In case that  $|X|$  is odd,
Nasseh and Yoshino~\cite[Theorem 3.6]{nassehyoshino} showed that there is a short exact sequence
\begin{equation}\label{eq20201110a}
0 \to N ( -|X| ) \to N |_A \otimes _A B  \xra{\pi _N} N \to 0
\end{equation}
of DG $B$-modules, and $N$ is weakly liftable to $A$ if and only if~\eqref{eq20201110a} is split.
Thus, in this case, na\"ive  liftability is equivalent to weakly liftability.

In contrast to the previous case, if  $|X|$  is even, then $B = A \oplus XA \oplus X^{(2)}A \oplus \cdots$  as an underlying  right $A$-module. Hence, if $\{ e_{\lambda}\}_{\lambda\in \Lambda}$ is a basis for $N$ as a free $B$-module, then
$\{ e_{\lambda} X ^{(i)} \otimes 1 \}_{i \in \mathbb{N}, \lambda\in \Lambda}$ is a basis for $N|_A \otimes _A B$ as a free $B$-module.
Therefore, one has
$$
N|_A \otimes _A B = \bigoplus _{i \in \mathbb{N}} \left(  \bigoplus _{\lambda\in \Lambda} e_{\lambda} X ^{(i)} \otimes B \right)  \cong \bigoplus _{i  \in \mathbb{N}}  N(-i |X|)
$$
as an underlying right $B$-module, and $\pi _N$ maps $w \in N (-i|X|)$ to $wX^{(i)} \in N$.
Note that $N|_A \otimes _A B$ by its nature is an infinite direct sum of shifts of $N$. Hence, the notion of na\"ive lifting is not equivalent to that of weak lifting in the sense of Definition~\ref{weak lifting defn} in the case where $|X|$ is even.
\end{para}

\begin{lem}\label{naive-even}
Assume  that $|X|$  is even.
If  a semifree DG $B$-module  $N$ is liftable to $A$, then  $N$  is na\"ively liftable to $A$.
\end{lem}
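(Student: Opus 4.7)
The plan is to produce the splitting $\rho$ explicitly from a given lifting. By hypothesis and Definition~\ref{weak lifting defn}, since $N$ is semifree and liftable, there is a (semifree) DG $A$-module $M$ together with a DG $B$-module isomorphism $\phi\colon M\otimes_A B \xrightarrow{\cong} N$; it therefore suffices to produce a splitting for $\pi_{M\otimes_A B}$ and transport it along $\phi$. Note that the parity of $|X|$ plays no explicit role in the argument; the point of the lemma is simply that naïve liftability is a formal consequence of honest liftability, whereas the converse (which fails in general when $|X|$ is odd, and is the substantive direction) requires the theory developed earlier.

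The key construction, which I would carry out first, is a canonical DG $B$-module map
$$
\iota_M\colon M\otimes_A B \longrightarrow (M\otimes_A B)|_A \otimes_A B,\qquad m\otimes b \mapsto (m\otimes 1)\otimes b.
$$
Well-definedness over $A$ is immediate from the identity $(m\otimes 1)\cdot a = m\otimes a$ in $M\otimes_A B$, and compatibility with the right $B$-action is clear from the formula. Compatibility with the differentials follows from $d^{M\otimes_A B}(m\otimes 1) = d^M(m)\otimes 1$ (since $d^B(1)=0$) combined with the Leibniz rule for the differential on $N|_A\otimes_A B$ recorded in~\ref{para20201110d}. By construction $\pi_{M\otimes_A B}((m\otimes 1)\otimes b) = (m\otimes 1)\cdot b = m\otimes b$, so $\pi_{M\otimes_A B}\circ \iota_M = \id_{M\otimes_A B}$.

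Having established this, I would finish by defining $\rho := (\phi|_A \otimes_A \id_B)\circ \iota_M \circ \phi^{-1}$. The naturality of $\pi$ (which is simply the multiplication map) yields the commutative square $\pi_N\circ (\phi|_A\otimes_A \id_B) = \phi\circ \pi_{M\otimes_A B}$, and then $\pi_N\rho = \id_N$ follows at once.

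The only point that requires care, rather than a genuine obstacle, is the interpretation of liftability for a semifree $N$: Definition~\ref{weak lifting defn} is phrased in the derived category, but under the semifree hypothesis on both $N$ and $M$ one may upgrade the derived isomorphism to an honest DG $B$-module isomorphism, which is what the construction above consumes. Aside from that, the proof is a direct verification.
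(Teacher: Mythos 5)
Your proof is correct and is essentially the paper's argument: the splitting $m\otimes b\mapsto(m\otimes 1)\otimes b$ you construct is exactly the map $\id_M\otimes\rho$ obtained in the paper from the section $\rho\colon B\to B|_A\otimes_A B$, $b\mapsto 1\otimes b$, transported through the identification $N|_A\otimes_A B\cong M\otimes_A(B|_A\otimes_A B)$. Your side remarks (that the parity of $|X|$ is irrelevant to this implication, and that the derived isomorphism upgrades to an honest one under the semifreeness hypotheses) are also accurate.
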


\begin{proof}
Let  $M$  be a DG $A$-module with $N \cong M \otimes _A B$.
Then, there is an isomorphism $N |_A \otimes _A  B \cong  M \otimes _A (B |_A \otimes _A B)$ of right DG $B$-modules such that
$$
\xymatrix{
N |_A \otimes _A  B\ar[rrr]^{\pi_N}\ar[d]_{\cong}&&&N\ar[d]^{\cong}\\
M \otimes _A (B |_A \otimes _A B)\ar[rrr]^{\id _M \otimes \pi_B}&&&M\otimes_A B
}
$$
is a commutative diagram.
%Let  $\pi _B\colon B |_A \otimes _A B \to B$ be the natural mapping defined by $\pi _B (b_1 \otimes b_2)=b_1b_2$.
%It is clear from the definition that  $\pi _N = \id _M \otimes \pi_B$.
Thus, if  $\pi _B$ has a right inverse, then so does $\pi_N$.

Define $\rho\colon B \to B|_A \otimes _A B$ by $\rho (b) = 1 \otimes b$, for all $b \in B$. Then,
$\rho$ is a right DG $B$-module homomorphism such that $\pi_B \rho = \id _B$. Hence, $\pi _B$ has a right inverse, and therefore, $\pi_N$ is split. This means that $N$  is na\"ively liftable to $A$, as desired.
\end{proof}

Here is our new characterization of weak lifting property of DG modules along simple extensions of DG algebras.

\begin{thm}\label{naive thm one variable}
Assume that  $(N,\partial ^N)$  is  a bounded below semifree DG $B$-module.
Then the following conditions are equivalent.
\begin{enumerate}[\rm(i)]
\item
$N$  is na\"ively liftable to $A$.
\item
$N$ is liftable to $A$ if $|X|$ is even, and $N$  is weakly liftable to $A$ (in the sense of Proposition~\ref{odd weak}) if $|X|$ is odd.
\item
The cohomology class  $[j_X (\partial ^N)]$ vanishes in  $\Ext _B^{|X|+1}(N, N)$.
\end{enumerate}
\end{thm}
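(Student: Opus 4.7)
The plan is to close the cycle (iii) $\Rightarrow$ (ii) $\Rightarrow$ (i) $\Rightarrow$ (iii). Two of the three implications recycle machinery already built up in the paper; the genuinely new content is (i) $\Rightarrow$ (iii).

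For (iii) $\Rightarrow$ (ii), I would fix a graded $B$-free basis $\mathcal{B}$ of $N$ and use the hypothesis $[j_X(\partial^N)]=0$ to choose $\gamma\in\E$ of degree $-|X|$ with $j_X^{\mathcal{B}}(\partial^N)=[\partial^N,\gamma]$. When $|X|$ is even, the plan is to set $\Delta:=j_X^{\mathcal{B}}+\mathrm{ad}(\gamma)$; Lemma~\ref{lem20200318s} places $\Delta$ in $\Der_A(\D)$, and one checks $\Delta(X^{(n)})=X^{(n-1)}$ and $\Delta(\partial^N)=0$, so Proposition~\ref{cor20200318f} produces a lifting. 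When $|X|$ is odd, I would set $\Delta:=j_X^{\mathcal{B}}-\mathrm{ad}(\gamma)$, use $(j_X^{\mathcal{B}})^2=0$ together with Lemma~\ref{useful lemma} to see that $\Delta^2=\mathrm{ad}(\alpha)$ for some $\alpha\in\E$, and then invoke Proposition~\ref{odd weak} to get weak liftability. This is precisely the engine of Corollaries~\ref{cor20201111s} and~\ref{cor20201111a}, applied now with the cohomological hypothesis in place of Ext-vanishing. For (ii) $\Rightarrow$ (i), the even case is Lemma~\ref{naive-even}, and the odd case follows from~\ref{naive-odd}, which identifies weak liftability with splitting of~\eqref{eq20201110a}; the splitting is exactly the existence of a right inverse for $\pi_N$.

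The remaining step (i) $\Rightarrow$ (iii) is where the new computation lives. I would fix a graded $B$-free basis $\mathcal{B}=\{e_\lambda\}$ of $N$; then $N|_A$ is free over $A$ on $\{e_\mu X^{(i)}\}$ (with $i\in\{0,1\}$ in the odd case and $i\geq 0$ in the even case), and $N|_A\otimes_A B$ is free over $B$ on the right $B$-basis $\{e_\mu X^{(i)}\otimes 1\}$. Write $\rho(e_\lambda)=\sum_{\mu,i}e_\mu X^{(i)}\otimes c^{(i)}_{\mu\lambda}$ for uniquely determined $c^{(i)}_{\mu\lambda}\in B$. The splitting condition $\pi_N\rho=\id$ is the relation $\sum_i X^{(i)}c^{(i)}_{\mu\lambda}=\delta_{\mu\lambda}$ in $B$. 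I would then define $\gamma\in\E$ of degree $-|X|$ by reading off the first-order-in-$X$ coefficients, $\gamma(e_\lambda):=\pm\sum_\mu e_\mu c^{(1)}_{\mu\lambda}$, with the sign pinned down by the computation. Expanding the DG compatibility $\partial\rho(e_\lambda)=\rho\partial^N(e_\lambda)$ in the chosen basis and extracting the $e_\mu\otimes 1$-component should then yield the identity $j_X^{\mathcal{B}}(\partial^N)=[\partial^N,\gamma]$ in $\E$, after which Theorem~\ref{prop20201110b} forces the basis-independent class $[j_X(\partial^N)]$ in $\Ext_B^{|X|+1}(N,N)$ to vanish.

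The main obstacle is the explicit bookkeeping in this last step. I must expand $\partial(n\otimes b)=\partial^N(n)\otimes b+(-1)^{|n|}n\otimes d^Bb$ and $\partial^N(e_\mu X^{(i)})=\partial^N(e_\mu)X^{(i)}+(-1)^{|e_\mu|}e_\mu X^{(i-1)}t$ into the fixed $B$-basis of $N|_A\otimes_A B$, juggle the signs produced by graded commutativity in $B$ and by $A$-balance in $\otimes_A$, and single out the lone coefficient that recognizes $j_X^{\mathcal{B}}(\partial^N)$. A convenient organizing device is the right $B$-linear map $D\colon N|_A\otimes_A B\to N$ of degree $-|X|$ defined by $D(e_\mu X^{(i)}\otimes b):=e_\mu X^{(i-1)}b$ for $i\geq 1$ and $D(e_\mu\otimes b):=0$, because $\gamma$ is, up to sign, the composition $D\rho$, and the required identity for $[\partial^N,\gamma]$ reduces to a single commutator identity for $D$ combined with $\partial\rho=\rho\partial^N$. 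The bounded-below hypothesis on $N$ is exactly what keeps these expansions controlled in each homological degree.
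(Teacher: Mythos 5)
Your implications (iii)$\Rightarrow$(ii) and (ii)$\Rightarrow$(i) coincide with the paper's proof: the first is exactly the construction $\Delta=j_X\pm ad(\gamma)$ feeding Propositions~\ref{cor20200318f} and~\ref{odd weak}, and the second is~\ref{naive-odd} together with Lemma~\ref{naive-even}. Where you genuinely diverge is (i)$\Rightarrow$(iii). The paper avoids all coordinate work: a splitting gives $N|_A\otimes_AB\cong N\oplus N'$ as DG $B$-modules; the basis $\{e_\lambda\otimes 1\}$ of $N|_A\otimes_AB$ has structure constants in $A$, so $j_X^{\mathcal B'}(\partial^N\otimes_AB)=0$ on the nose; and then Theorem~\ref{prop20201110b} (basis-independence of the class) plus the block-diagonal behaviour of $j_X$ on direct sums kills the $N$-component. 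Your route instead manufactures an explicit null-homotopy from the section $\rho$ via the degree $-|X|$ operator $D$, resting on a commutator identity of the shape $\partial^N D-(-1)^{|X|}D(\partial^N\otimes_AB)=\pm\, j_X^{\mathcal B}(\partial^N)\circ\pi_N$ (with signs, including a twist by $(-1)^{|e_\mu||X|}$ in the odd case, that you have deferred); composing with $\rho$ and using $\pi_N\rho=\id_N$ and $(\partial^N\otimes_AB)\rho=\rho\,\partial^N$ gives $j_X^{\mathcal B}(\partial^N)=\pm[\partial^N,D\rho]$. This is a valid alternative: the paper's argument buys brevity by reusing Theorem~\ref{prop20201110b}, while yours exhibits the null-homotopy explicitly, which is more informative about where the class lives.

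One correction to your even case. The first definition of $\gamma$, reading off only the coefficients $c^{(1)}_{\mu\lambda}$, is not the right map when $|X|$ is even: there $D\rho(e_\lambda)=\sum_{\mu}\sum_{i\geq1}e_\mu X^{(i-1)}c^{(i)}_{\mu\lambda}$ involves all higher divided-power coefficients, and it is this full sum (your ``organizing device'' version), not its first-order truncation, that satisfies the required identity. Relatedly, extracting only the $e_\mu\otimes1$-component of $\partial\rho=\rho\,\partial^N$ produces a relation among the $X^{(0)}$-parts of the structure constants and the term $tc^{(1)}$, which does not by itself recover $j_X^{\mathcal B}(\partial^N)=\sum_\nu e_\nu\sum_{k\geq1}X^{(k-1)}a^{(k)}_{\nu\lambda}$; you need the $D$-identity applied to all components at once. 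In the odd case the two descriptions of $\gamma$ agree and either bookkeeping works, so the fix is simply to discard the $c^{(1)}$ shortcut and run the whole step through $D$.
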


The proof of (iii)$\implies$(ii) can be found implicitly in~\cite[Theorem 3.6]{nassehyoshino} (where there is no mention of the notion of $j$-operators) and explicitly in~\cite[Theorem 4.7]{OY}. For the convenience of the reader, we give the proof below.

\begin{proof}
(iii)$\implies$(ii)
Since  $[j_X (\partial ^N)]=0$ in  $\Ext _B^{|X|+1}(N, N)$, there is an $\alpha \in \E$ with $|\alpha|=-|X|$ such that  $j_X (\partial ^N) = [\partial^N, \alpha]$.

In case that  $|X|$  is odd, let   $\Delta = j_X - ad(\alpha)$. Then we have $\Delta (X) =1$ and  $\Delta (\partial ^N) =0$.
By the proof of Corollary~\ref{cor20201111a}, we have  $\Delta ^2 = ad (j_X(\alpha) + \alpha ^2)$. Hence, it follows from Proposition~\ref{odd weak} that  $N$ is weakly liftable to $A$.

In case that  $|X|$  is even, let   $\Delta = j_X + ad(\alpha)$. Then we have  $\Delta (X^{(n)})= X^{(n-1)}$ for all $n \geq 1$  and  $\Delta (\partial ^N)=0$.
Therefore, it follows from Proposition~\ref{cor20200318f} that $N$ is liftable to $A$, as desired.

(ii)$\implies$(i) follows from~\ref{naive-odd} and Lemma~\ref{naive-even}.

(i)$\implies$(iii)
By definition, there exists a DG $B$-module $N'$ such that $N |_A \otimes _A B \cong N \oplus N'$ as DG $B$-modules, and the differential on  $N |_A \otimes _A B$  equals $\partial ^N \otimes _A B$.
Let $\B=\{ e_{\lambda}\}$  be  a graded basis of $N$ as a free $A$-module. Then  $\B':=\{ e_{\lambda} \otimes 1\}$  is a graded basis of $N|_A \otimes _A B$ as a free $B$-module.

If $\partial ^N(e _{\lambda}) = \sum _{\mu} e_{\mu}a_{\mu \lambda}$,  where  $a_{\mu \lambda} \in A$, then
$(\partial ^N \otimes _A B)(e _{\lambda}\otimes 1) = \sum _{\mu} (e_{\mu} \otimes 1) a_{\mu \lambda}$.
Since  $\frac{d}{d X} (a_{\mu \lambda}) =0$, we have $j_X^{\B'} (\partial ^N \otimes _A B) =0$.
In particular, $[j_X^{\B'} (\partial ^N \otimes _A B)]=0$ in $\Ext^{|X|+1}_B(N |_A \otimes _A B,N |_A \otimes _A B)$.
Since $N |_A \otimes _A B \cong N \oplus N'$ as DG $B$-modules, we conclude from Theorem~\ref{prop20201110b} that
$[ j_X (\partial ^{N \oplus N'})] =0$  as an element of
$$
\Ext _B^{|X|+1}(N \oplus N', N \oplus N') =
\left[\begin{matrix} \Ext _B^{|X|+1}(N, N) & \Ext _B^{|X|+1}(N, N')) \\
\Ext _B^{|X|+1}(N', N) & \Ext _B^{|X|+1}(N',  N')
\end{matrix}\right].
$$
Let $\mathcal{B} ^{\sharp}$ be the basis of $N \oplus N'$ as a free $B$-module that is the union of bases of $N$ and $N'$ as free $B$-modules. Since  $\partial ^{N \oplus N'} = \left[\begin{smallmatrix} \partial ^N & 0 \\ 0 & \partial ^{N'} \end{smallmatrix}\right]$, we obtain
$$
0=[j_X ^{\mathcal{B} ^{\sharp}} (\partial ^{N \oplus N'}) ]= \left[\begin{matrix} [j_X(\partial ^N)] & 0 \\ 0 & [j_X(\partial ^{N'})] \end{matrix}\right]
$$
in which the second equality comes from our discussion in~\ref{para20200427a}.
Hence, $ [j_X(\partial ^N)]=0$ as an element of $\Ext _B^{|X|+1}(N,  N)$.
 \end{proof}

%%%%%%%%%%%%%%%%%%
%%%%%%%%%%%%%%%%%%
%%%%%%%%%%%%%%%%%%

\section*{Acknowledgments}
A part of the discussion on this work occurred at Okayama University in June-July, 2018 and at the Institute for Research in Fundamental Sciences (IPM) in June-July, 2019. The first author is extremely grateful to the Department of Mathematics at Okayama University for the full support and great hospitality. The first and third authors would like to thank IPM for providing the opportunity for them to meet and continue their previous mathematical conversations in person.
%The authors would like to thank the referee for carefully reading the paper and for thoughtful comments.

%\bibliography{../+new}
\providecommand{\bysame}{\leavevmode\hbox to3em{\hrulefill}\thinspace}
\providecommand{\MR}{\relax\ifhmode\unskip\space\fi MR }
% \MRhref is called by the amsart/book/proc definition of \MR.
\providecommand{\MRhref}[2]{%
  \href{http://www.ams.org/mathscinet-getitem?mr=#1}{#2}
}
\providecommand{\href}[2]{#2}

\end{document}